\DeclareMathOperator{\Proj}{Proj}
\DeclareMathOperator{\Aut}{Aut}
\DeclareMathOperator{\Hom}{Hom}
\DeclareMathOperator{\im}{im}
\DeclareMathOperator{\Stab}{Stab}
\DeclareMathOperator{\Bl}{Bl}
\DeclareMathOperator{\ns}{ns}
\DeclareMathOperator{\Pic}{Pic}
\DeclareMathOperator{\Div}{div}
\DeclareMathOperator{\PGL}{PGL}
\DeclareMathOperator{\Sym}{Sym}
\DeclareMathOperator{\Bir}{Bir}
\DeclareMathOperator{\Conv}{Conv}
\theoremstyle{plain}
\newtheorem{thm}{Theorem}[section]
\newtheorem{lem}[thm]{Lemma}
\newtheorem{prop}[thm]{Proposition}
\newtheorem{cor}[thm]{Corollary}
\theoremstyle{definition}
\newtheorem{dfn}[thm]{Definition}
\newtheorem{ex}[thm]{Example}
\newtheorem{rmk}[thm]{Remark}
\newtheorem*{ack}{Acknowledgments}
\begin{document}

\colorlet{LightGray}{gray!30}

\tikzstyle{circ}=[draw,circle,inner sep=1mm]
\tikzstyle{rect}=[draw,rectangle,minimum size=2mm]
\tikzstyle{cross}=[draw,cross out]
\tikzstyle{greycirc}=[draw,circle,fill=black!15,inner sep=1mm]
\tikzstyle{blackcirc}=[draw,circle,fill=black,inner sep=1mm]
\tikzstyle{c}=[draw,circle]

\tikzset{vert/.style={path picture={ 
      \draw[black]
       (path picture bounding box.north) -- (path picture bounding box.south) ;       
       }}}       
       
\tikzset{oplus/.style={path picture={ 
      \draw[black]
       (path picture bounding box.south) -- (path picture bounding box.north) 
       (path picture bounding box.west) -- (path picture bounding box.east);
      }}} 

\title{Moduli of real pointed quartic curves}
\author{Sander Rieken}

\maketitle

\abstract{We describe a natural open stratum in the moduli space of smooth real
pointed quartic curves in the projective plane. This stratum consists of real isomorphism classes of pairs
$(C,p)$ with $p$ a real point on the curve $C$ such that the tangent line at $p$ intersects the curve in
two distinct points besides $p$. We will prove that this stratum consists of 20 connected components. Each of these components has a real toric structure defined by an involution in the Weyl group of
type $E_7$.}

\section{Introduction}

A classical result found in Zeuthen \cite{Zeuthen} is the classification of smooth real plane quartic curves. The set of real points of such a curve consists of at most four ovals in the real projective plane and the six possible configurations are shown below in Figure \ref{realquarticcurves}. The space of real plane quartic curves is the projective space $P_{4,3}(\mathbb{R})=\mathbb{P}\Sym^4(\mathbb{R}^3)$ of dimension $14$. The discriminant subspace $\Delta(\mathbb{R}) \subset P_{4,3}(\mathbb{R})$ that consists of singular real quartic curves is of codimension one. It was proved by Klein \cite{Klein} that each of the six types of smooth real plane quartic curves determines a connected component in the space $P_{4,3}(\mathbb{R})-\Delta(\mathbb{R})$. We will be interested in the moduli space
\[ \mathcal{Q}^\mathbb{R} = \PGL(3,\mathbb{R}) \backslash \left( P_{4,3}(\mathbb{R})-\Delta(\mathbb{R}) \right)\]
whose points represent real isomorphism classes of such quartics. This space also consists of six connected components since the group $\PGL(3,\mathbb{R})$ is connected.  

\begin{figure}[H]
\centering
\setlength{\tabcolsep}{12pt}
\begin{tabular}{ccc}
\includegraphics[scale=0.16,trim= 200 60 200 60,clip]{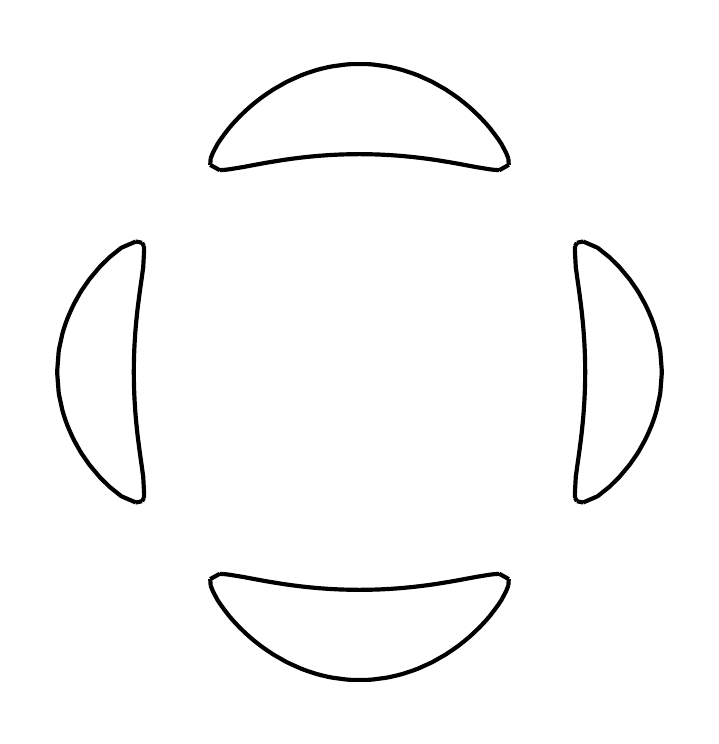} & \includegraphics[scale=0.16,trim= 200 60 200 60,clip]{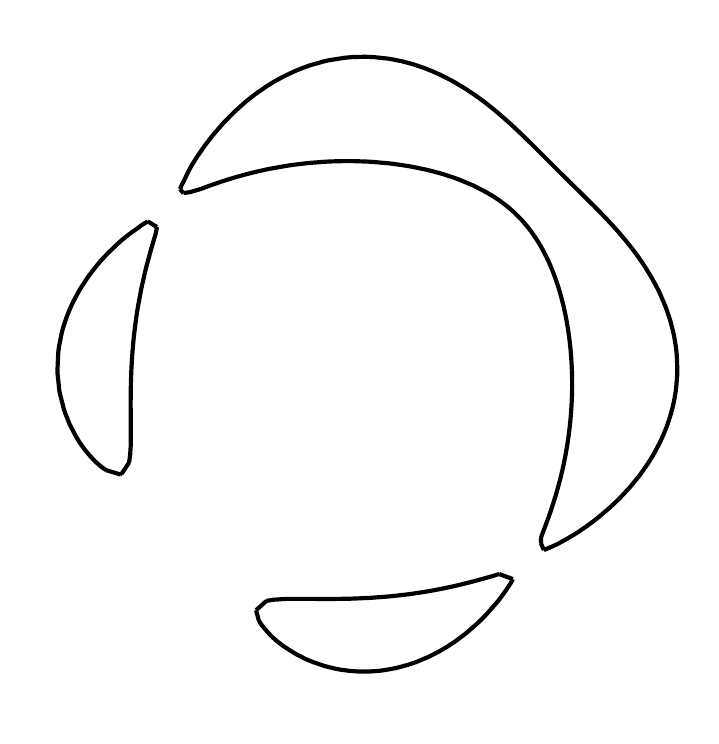} &
\includegraphics[scale=0.16,trim = 200 60 200 60,clip]{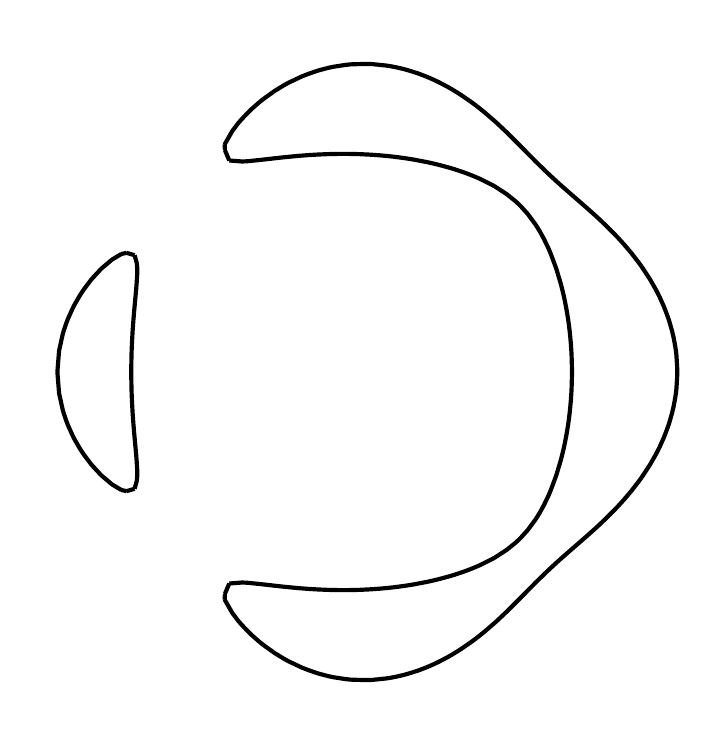} \\

\includegraphics[scale=0.16,trim = 200 60 200 60,clip]{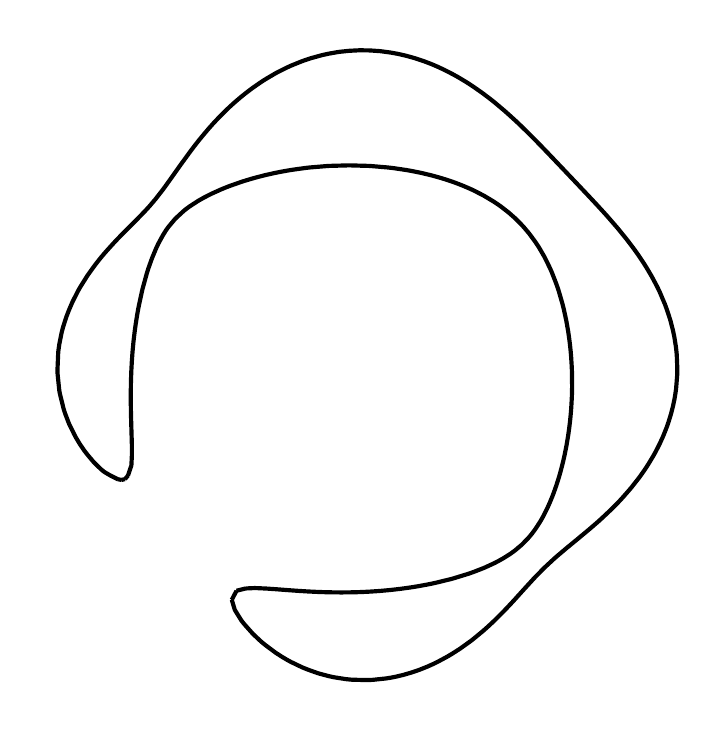} &
\includegraphics[scale=0.16,trim = 200 60 200 60,clip]{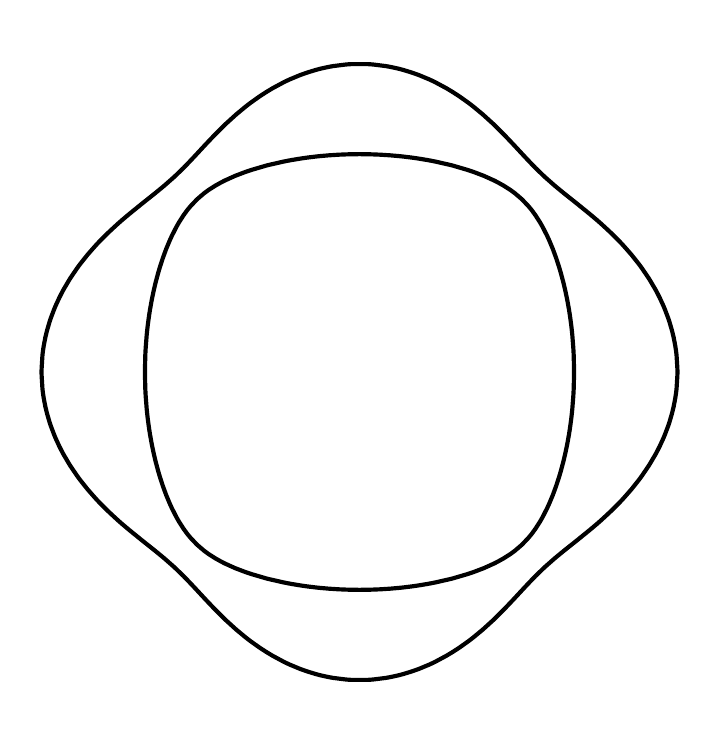} &
 \\
\end{tabular}
\caption{The six topological types of smooth real plane quartic curves obtained by deforming a union of two ellipses.}
\label{realquarticcurves}
\end{figure}

In this article we use modern techniques from the theory of root systems and del Pezzo surfaces to study a variation on the classification by Zeuthen. Suppose we have a smooth real plane quartic curve together with a \emph{general} real point on the curve. By general point we mean that the tangent line at this point intersects the curve in two other, distinct points. In other words the tangent line at this point is not a bitangent or a flex line. These two other points of intersection can be both real or form a pair of complex conjugate points. Our first result is the classification of real quartic curves with a general point, similar to the one given by Zeuthen. 

\begin{thm}\label{introthm1}
The moduli space $(\mathcal{Q}_1^\circ)^\mathbb{R}$ of smooth real plane quartic curves with a general point consists of $20$ connected components. Representative curves for these twenty components are shown in Figure \ref{20classes} below.
\end{thm} 

\begin{figure}[H]
\centering
\setlength{\tabcolsep}{0pt}
\begin{tabular}{cccc@{\hspace{10pt}}c}
\includegraphics[scale=0.08,trim = 250 0 250 0,clip]{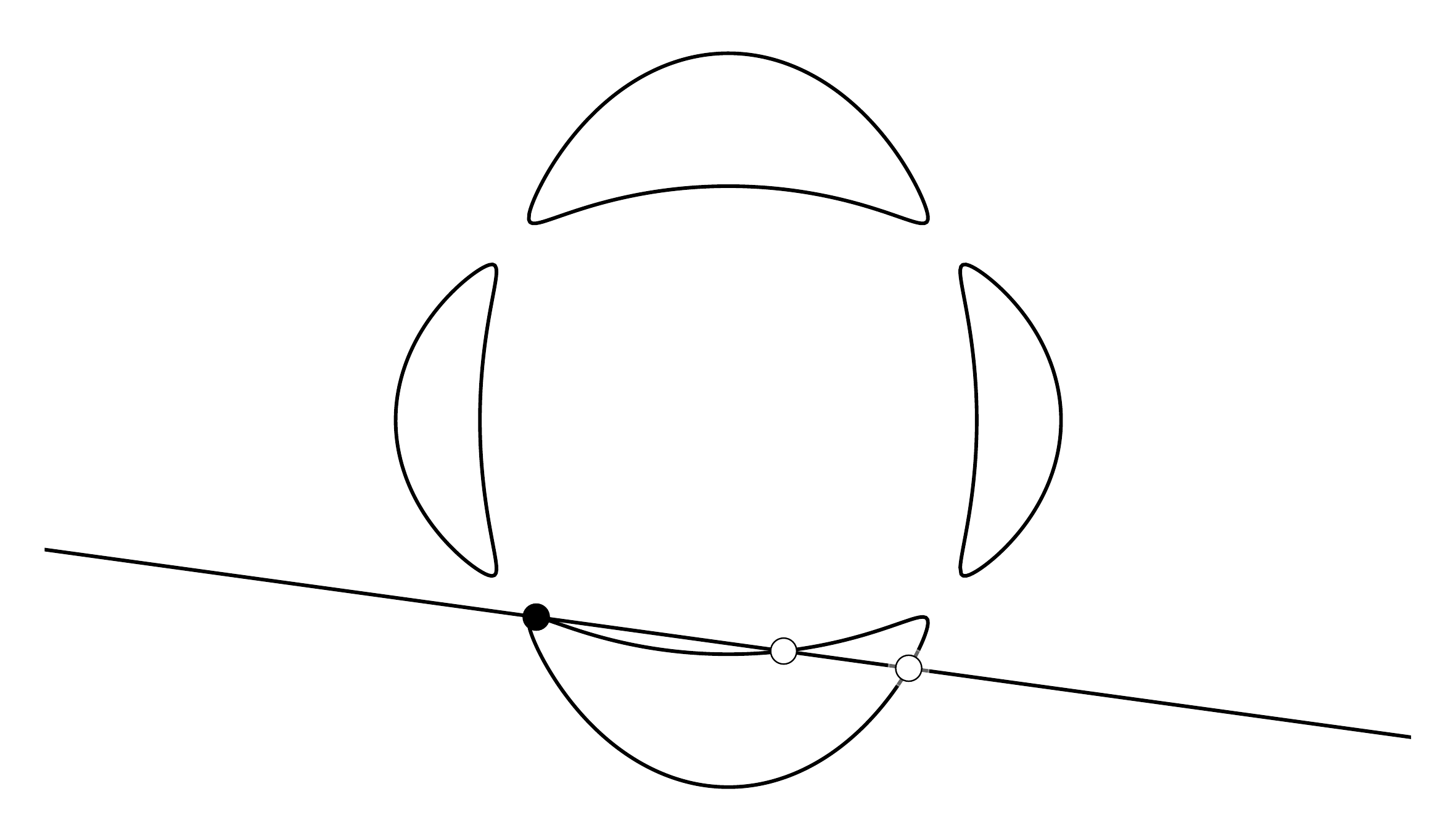}&  
\includegraphics[scale=0.08,trim = 250 0 250 0,clip]{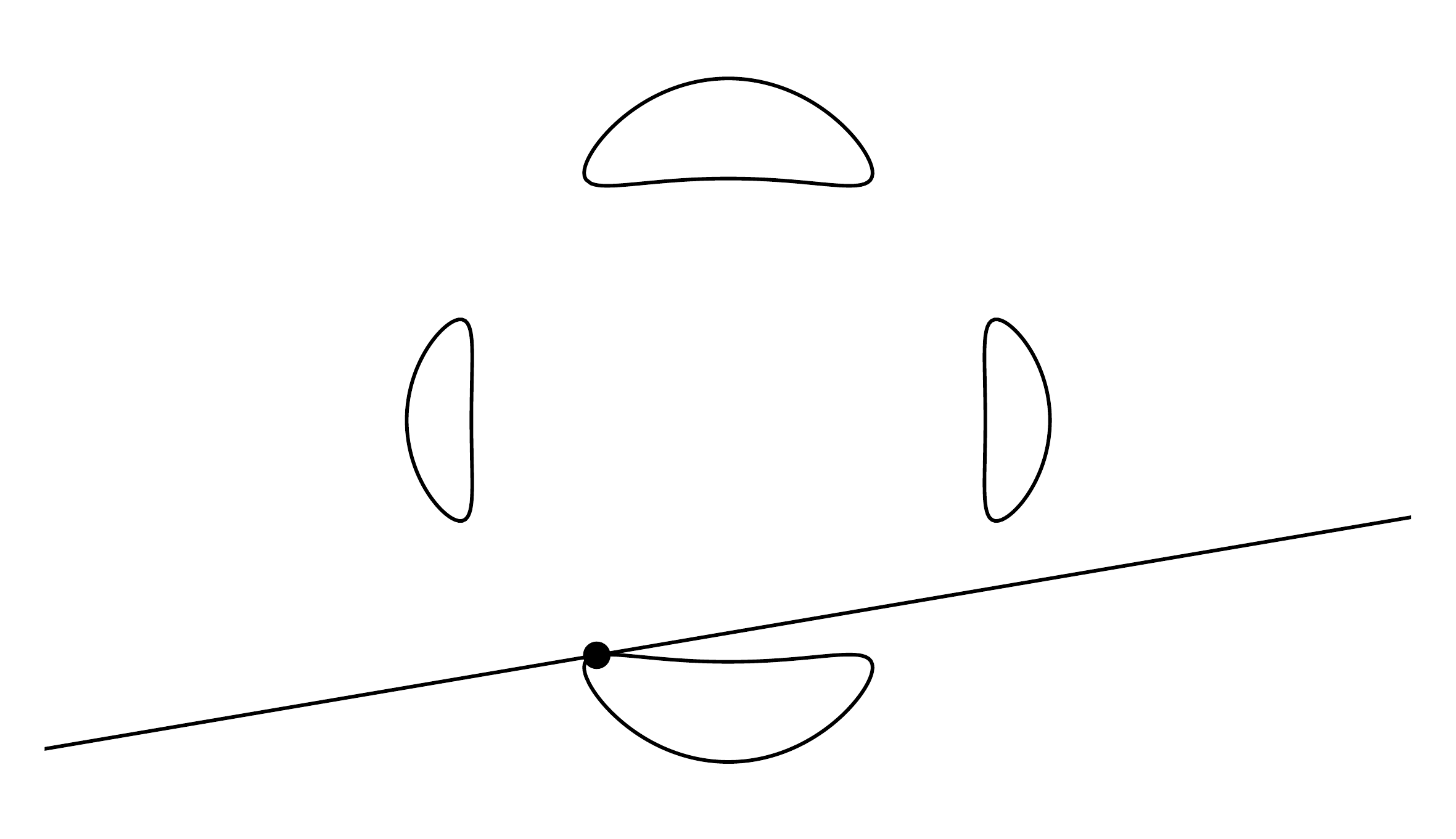}&
\includegraphics[scale=0.08,trim = 250 0 250 0,clip]{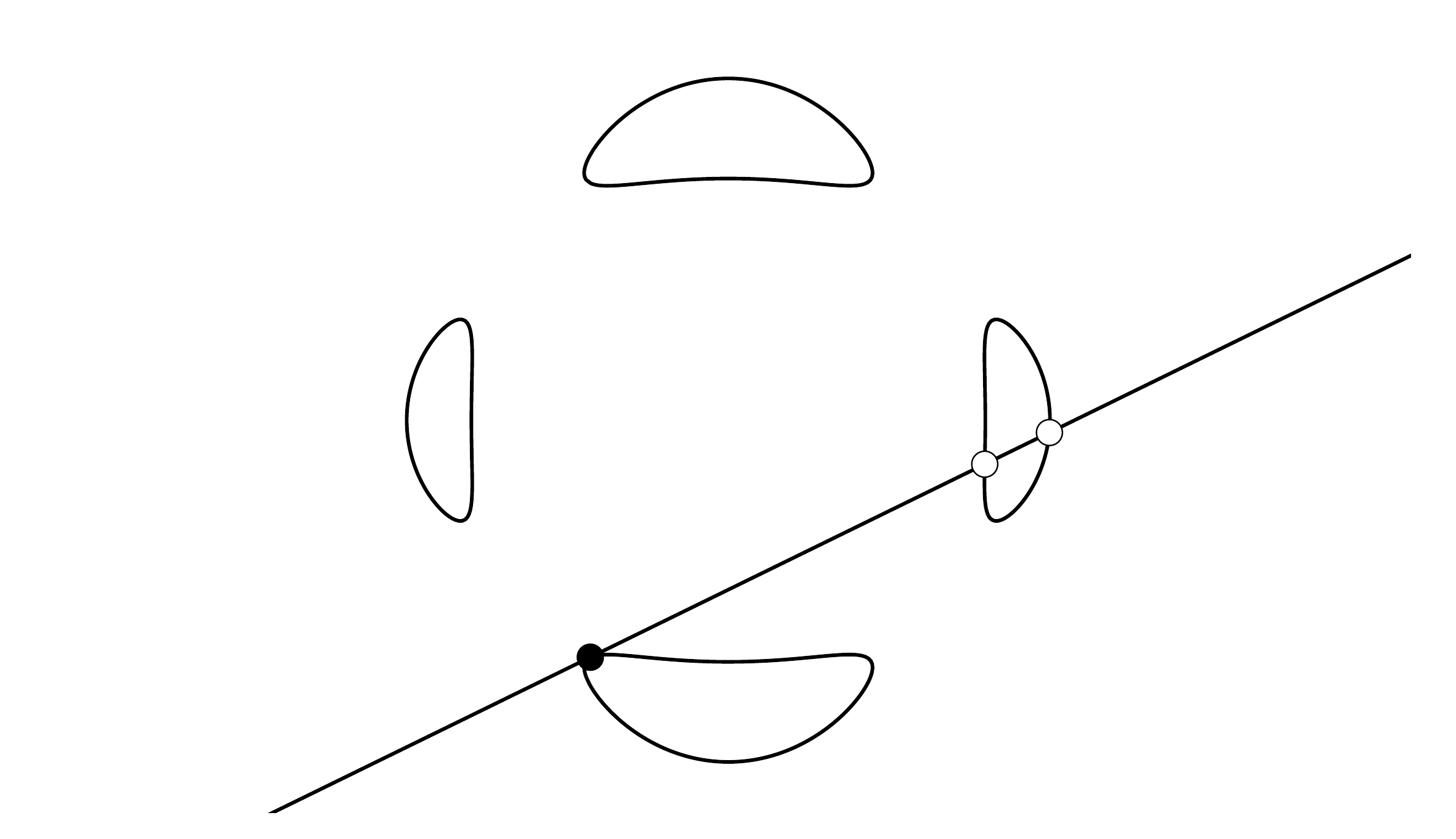}&  
\includegraphics[scale=0.08,trim = 250 0 250 0,clip]{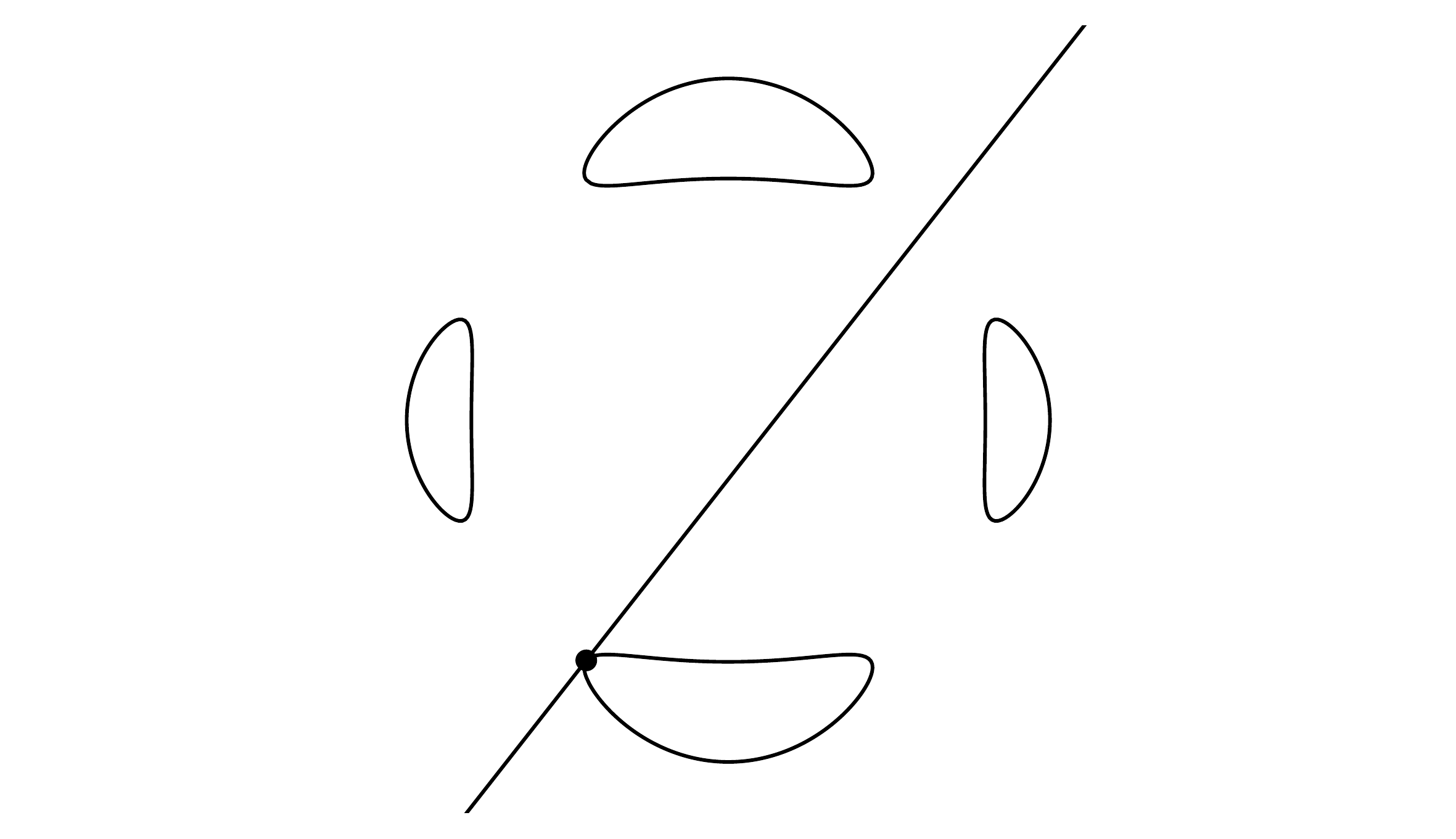}& 
\includegraphics[scale=0.08,trim = 250 0 250 0,clip]{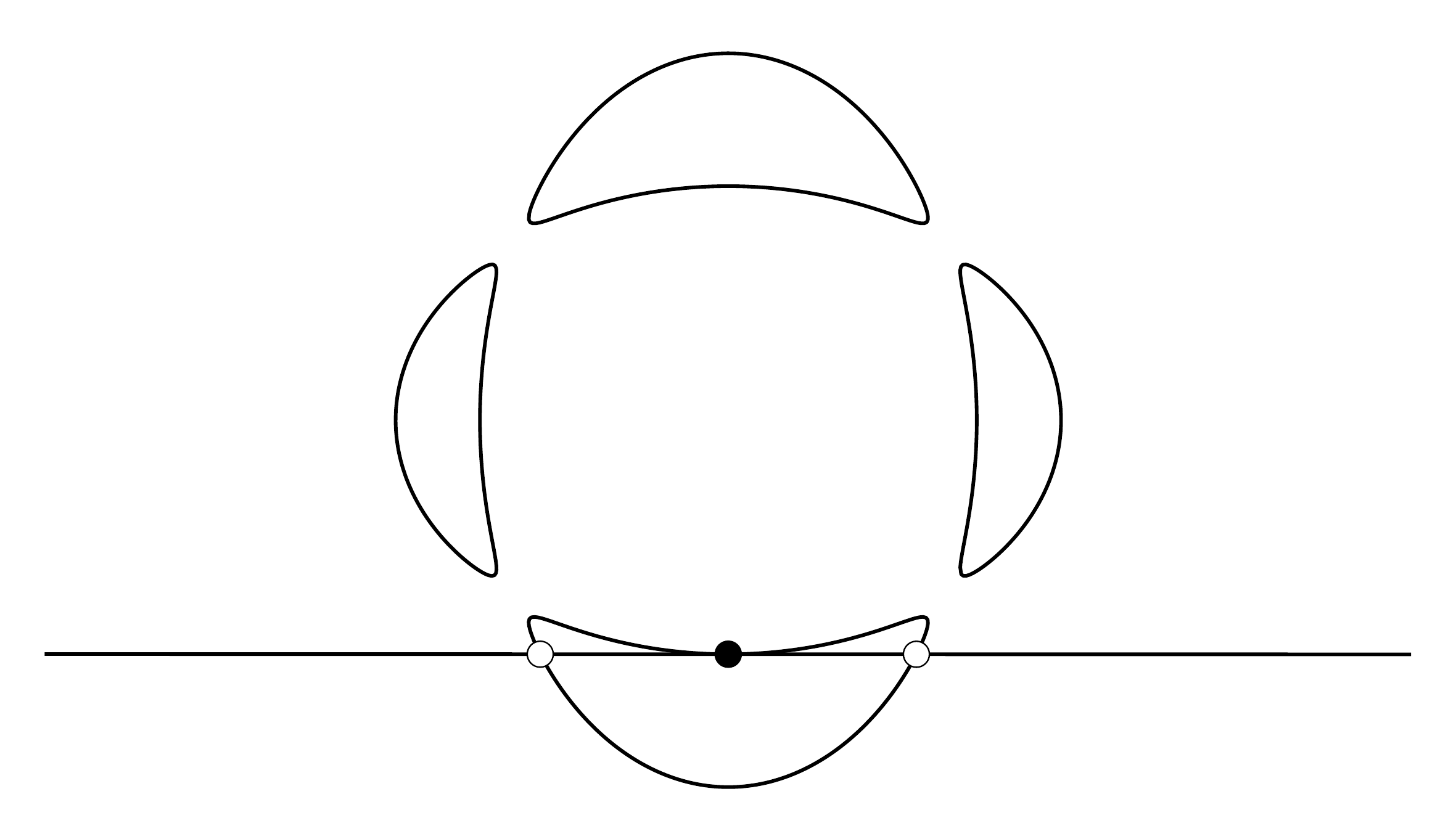} \\
\includegraphics[scale=0.08,trim = 250 0 250 0,clip]{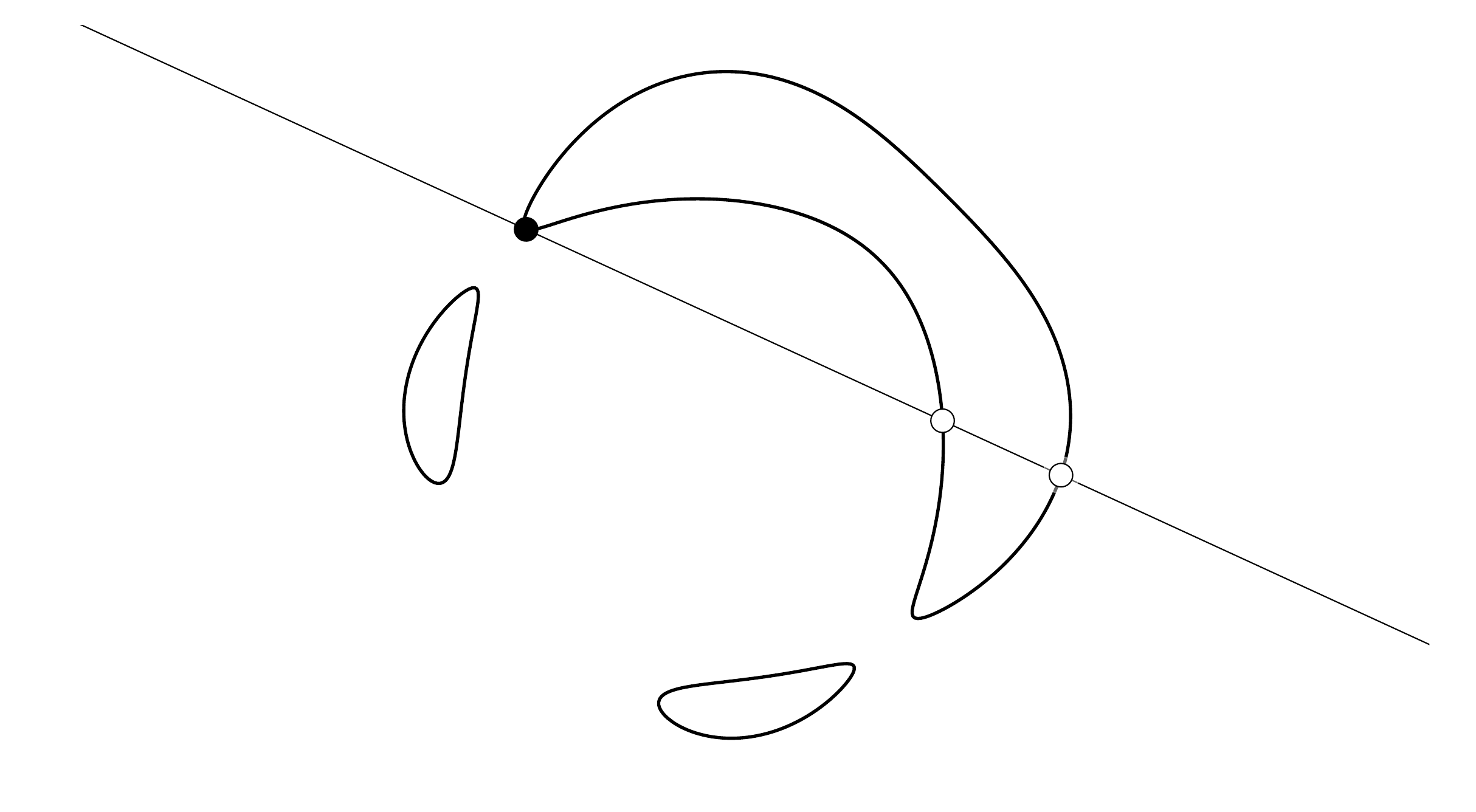}& 
\includegraphics[scale=0.08,trim = 250 0 250 0,clip]{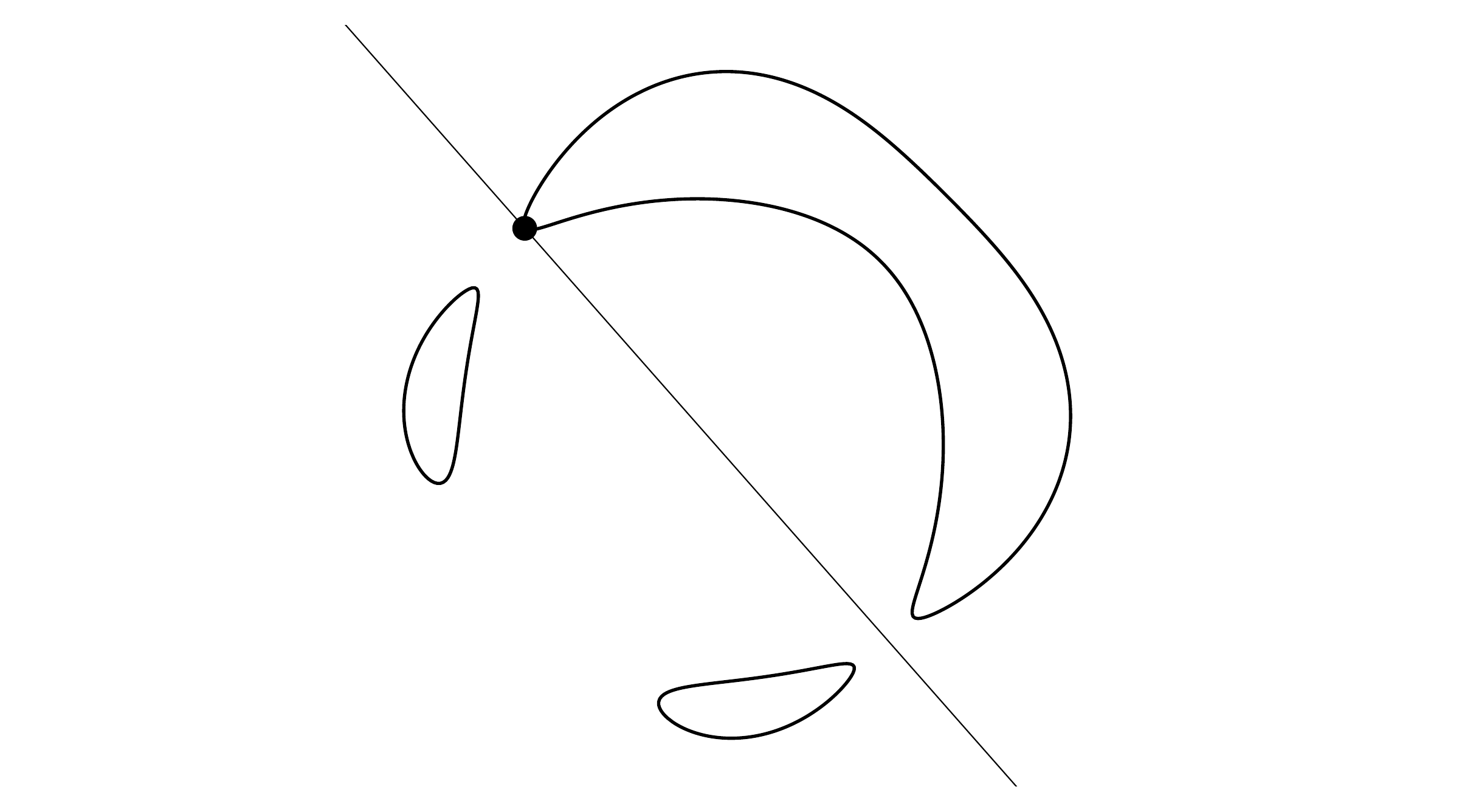}&  
\includegraphics[scale=0.08,trim = 250 0 250 0,clip]{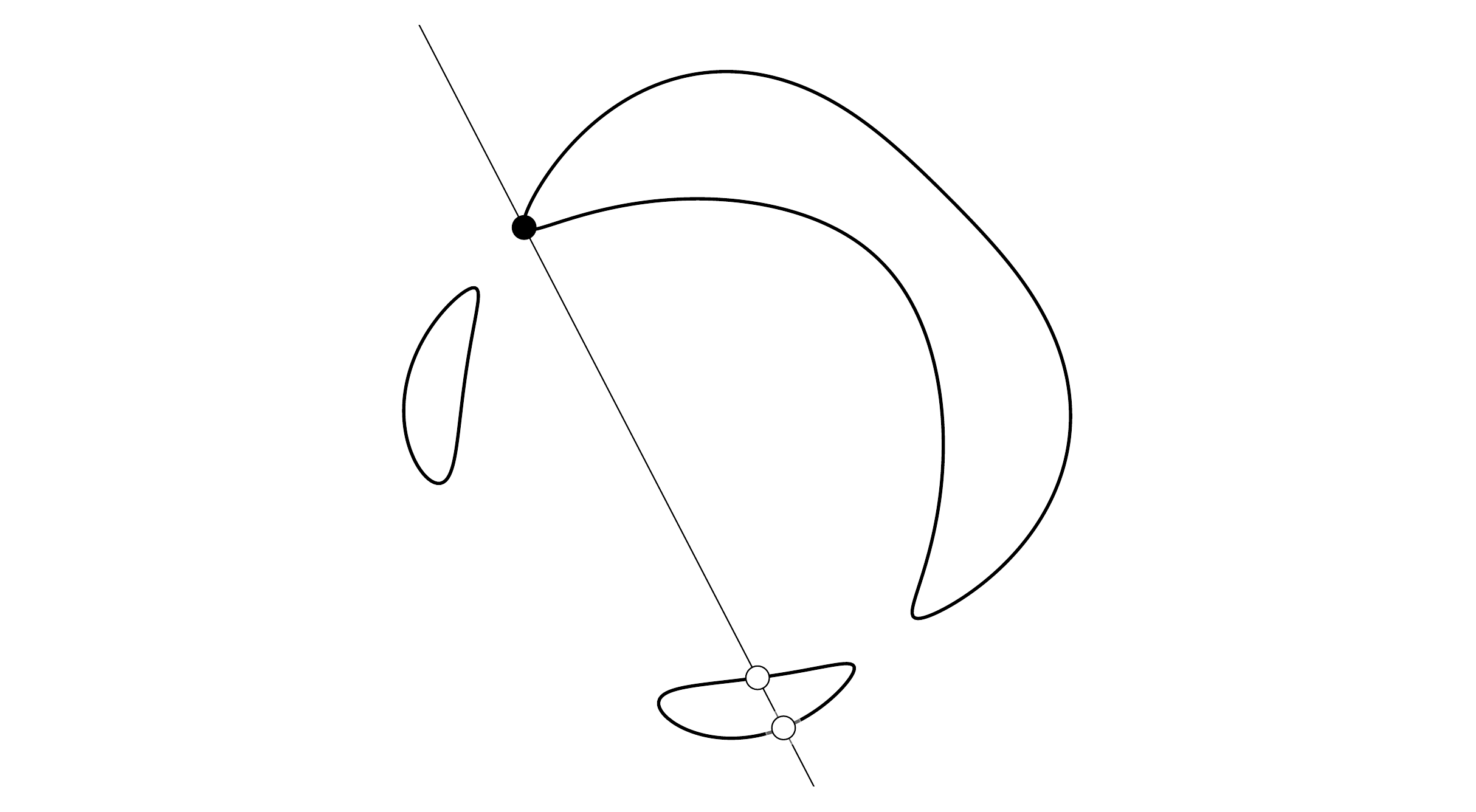}&&
\includegraphics[scale=0.08,trim = 250 0 250 0,clip]{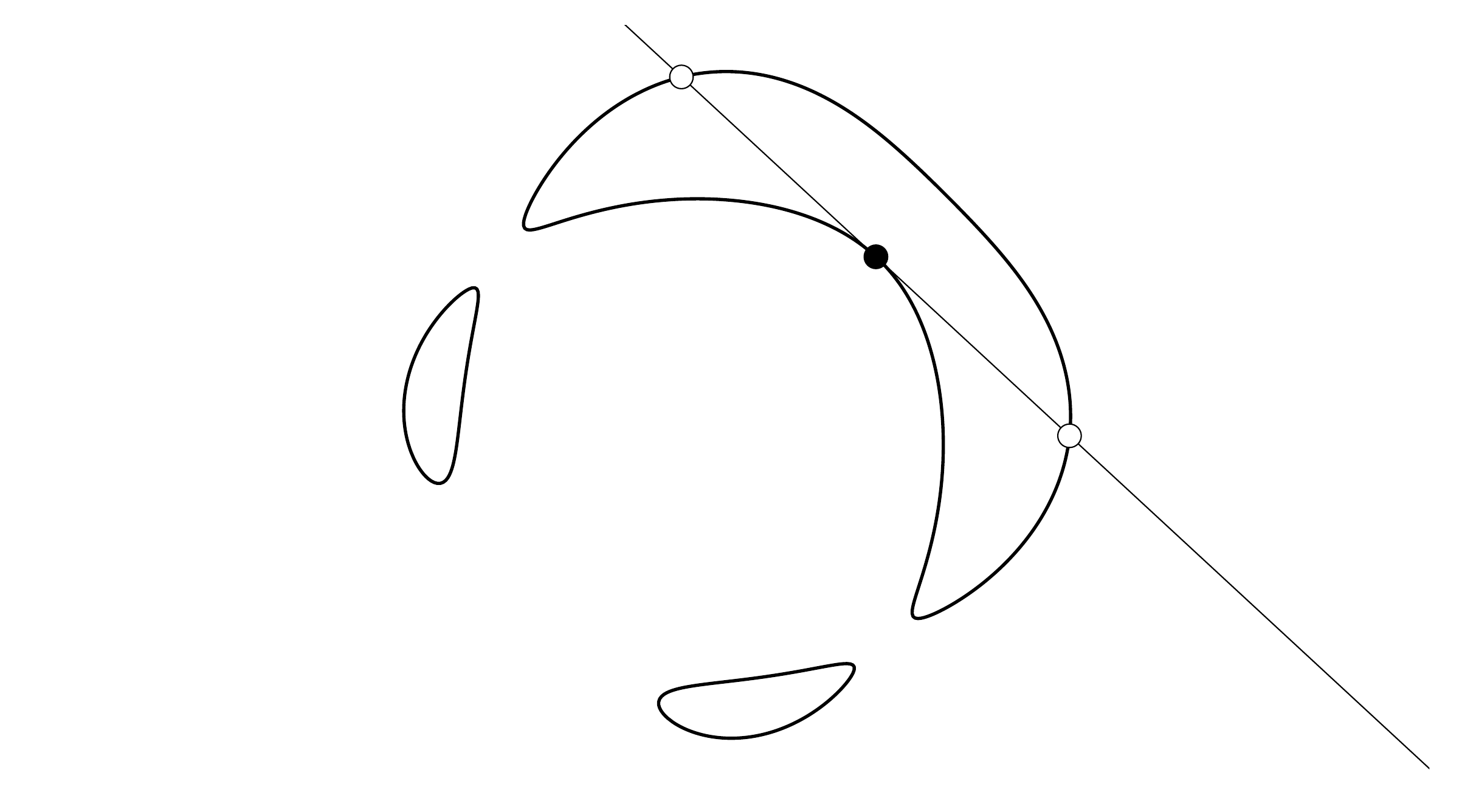}\\
\includegraphics[scale=0.08,trim = 250 0 250 0,clip]{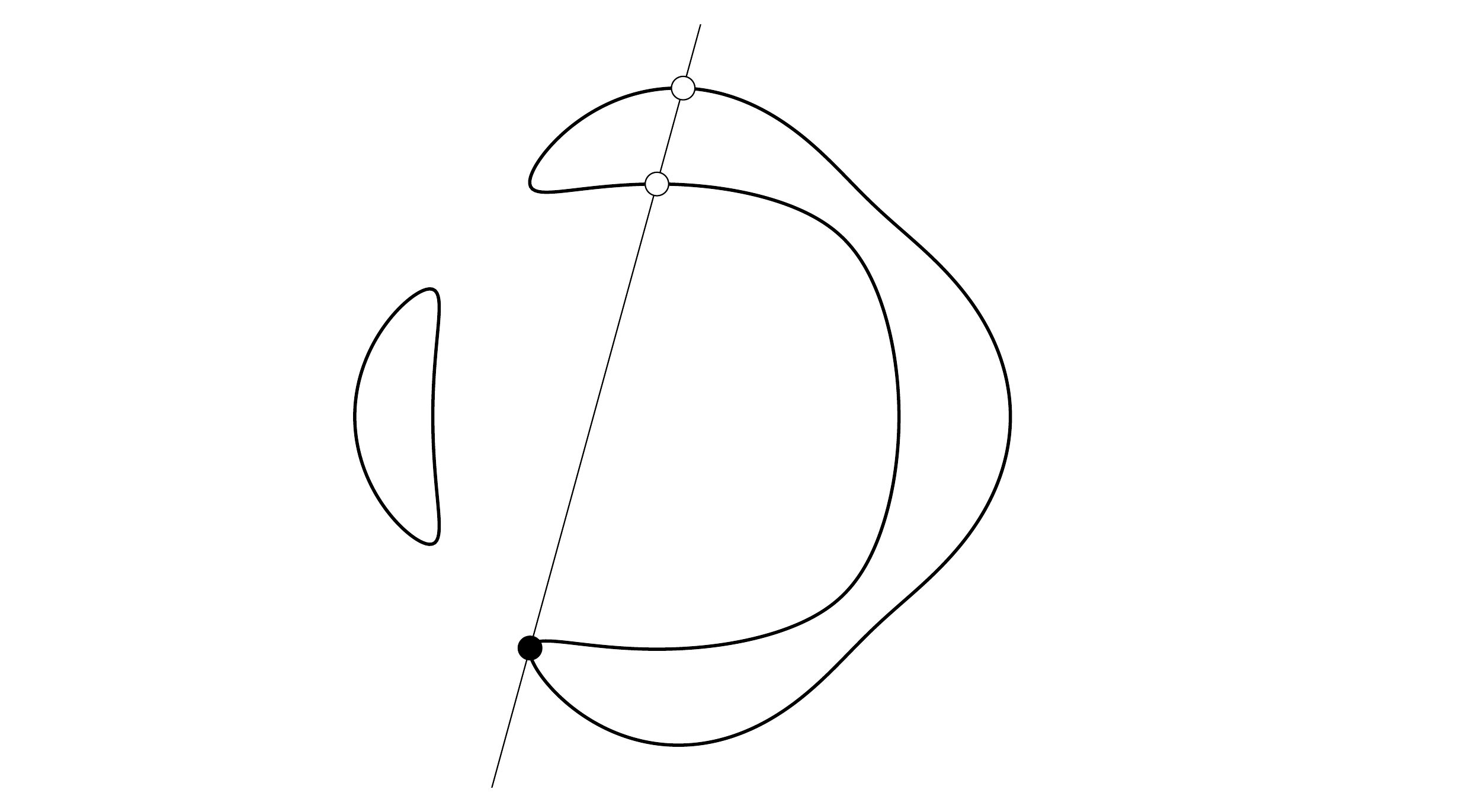}& 
\includegraphics[scale=0.08,trim = 250 0 250 0,clip]{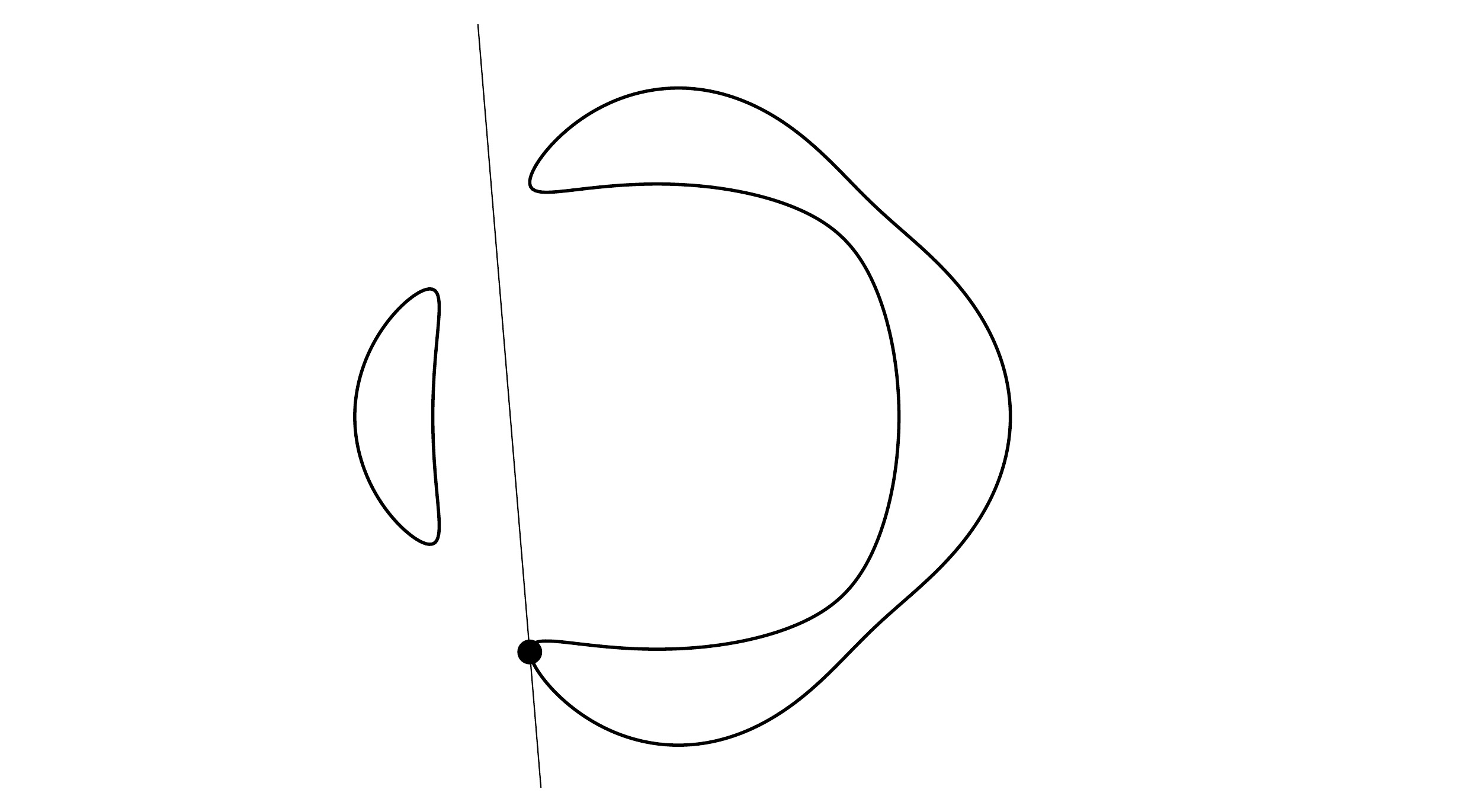}&
\includegraphics[scale=0.08,trim = 250 0 250 0,clip]{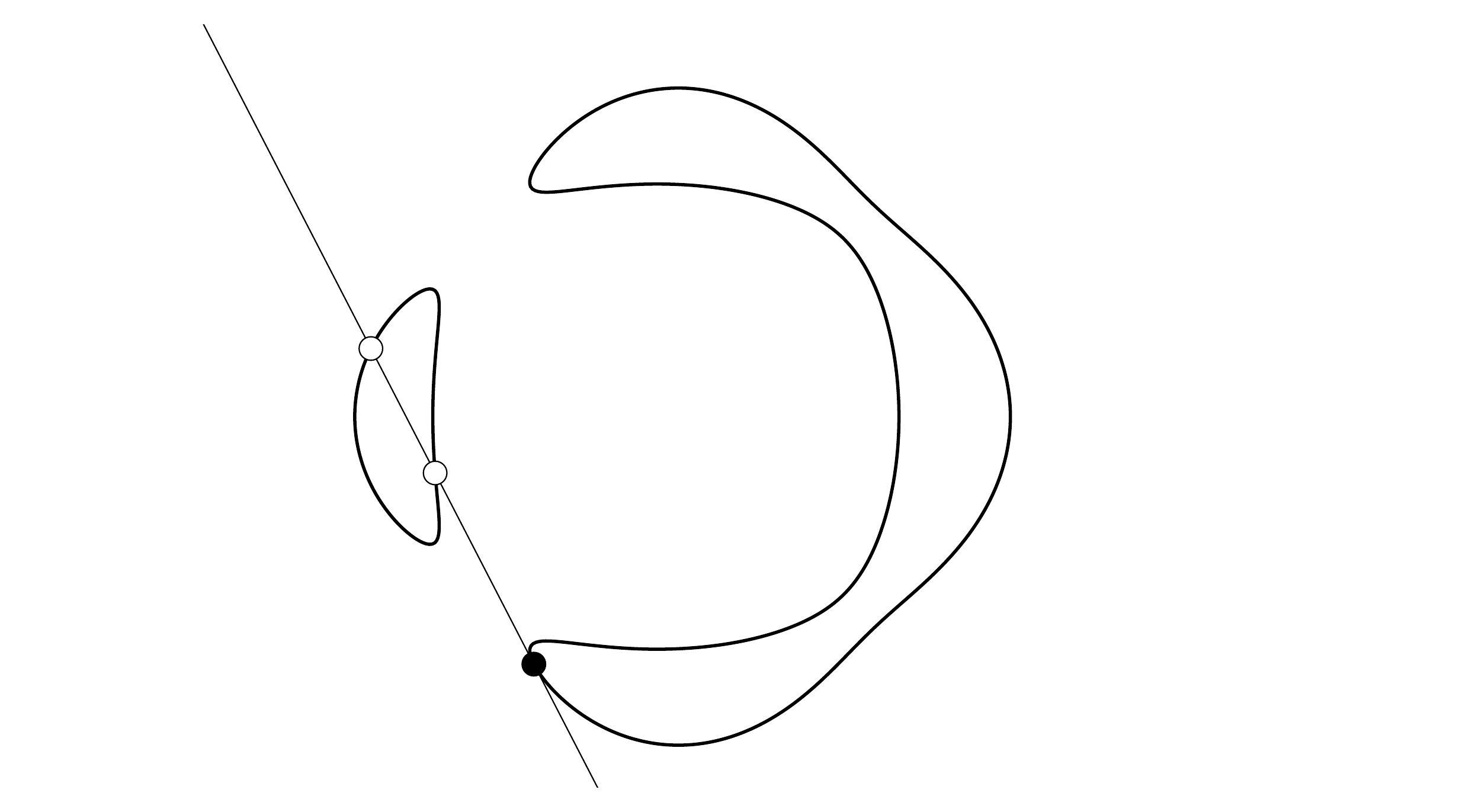}&&
\includegraphics[scale=0.08,trim = 250 0 250 0,clip]{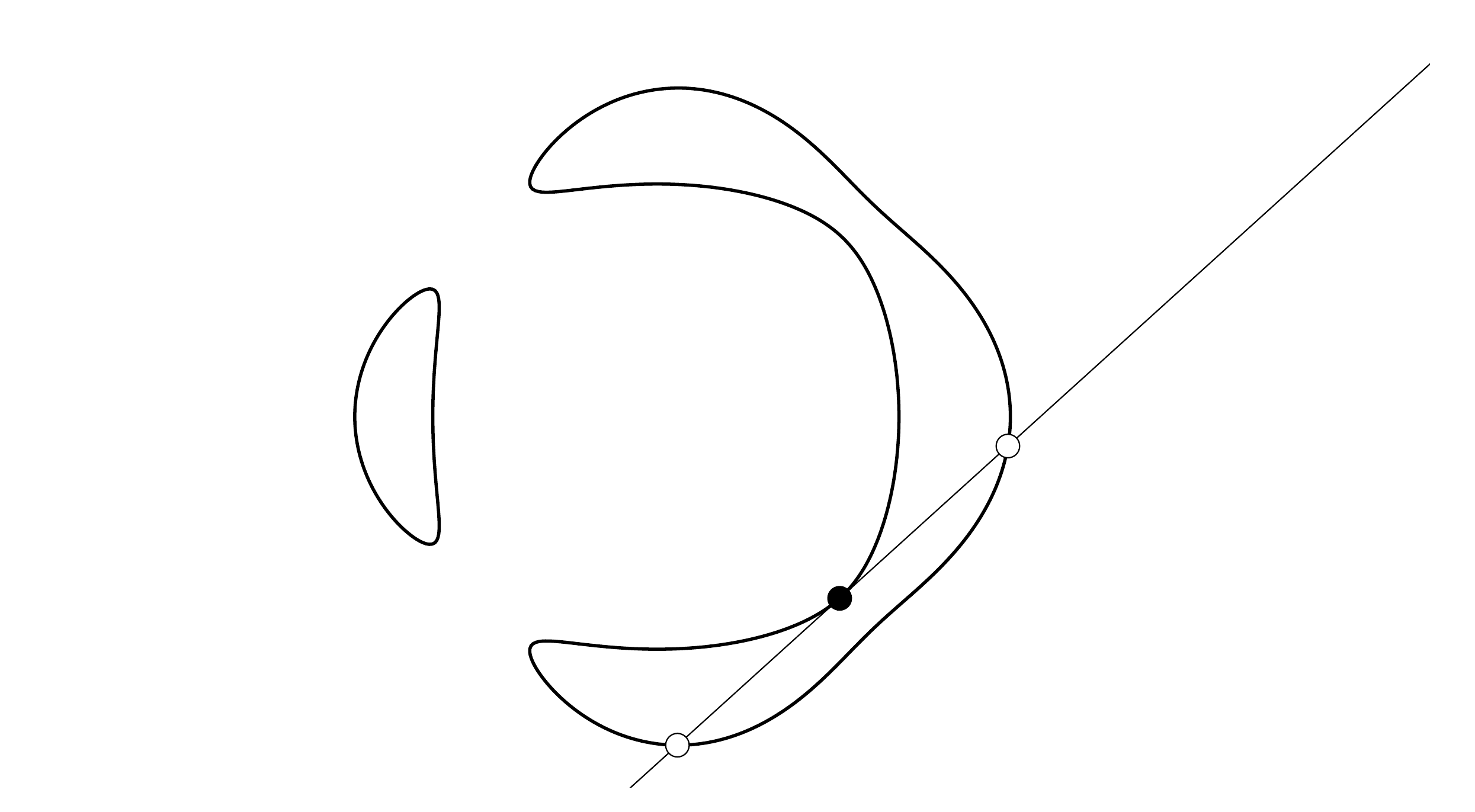}\\
\includegraphics[scale=0.08,trim = 250 0 250 0,clip]{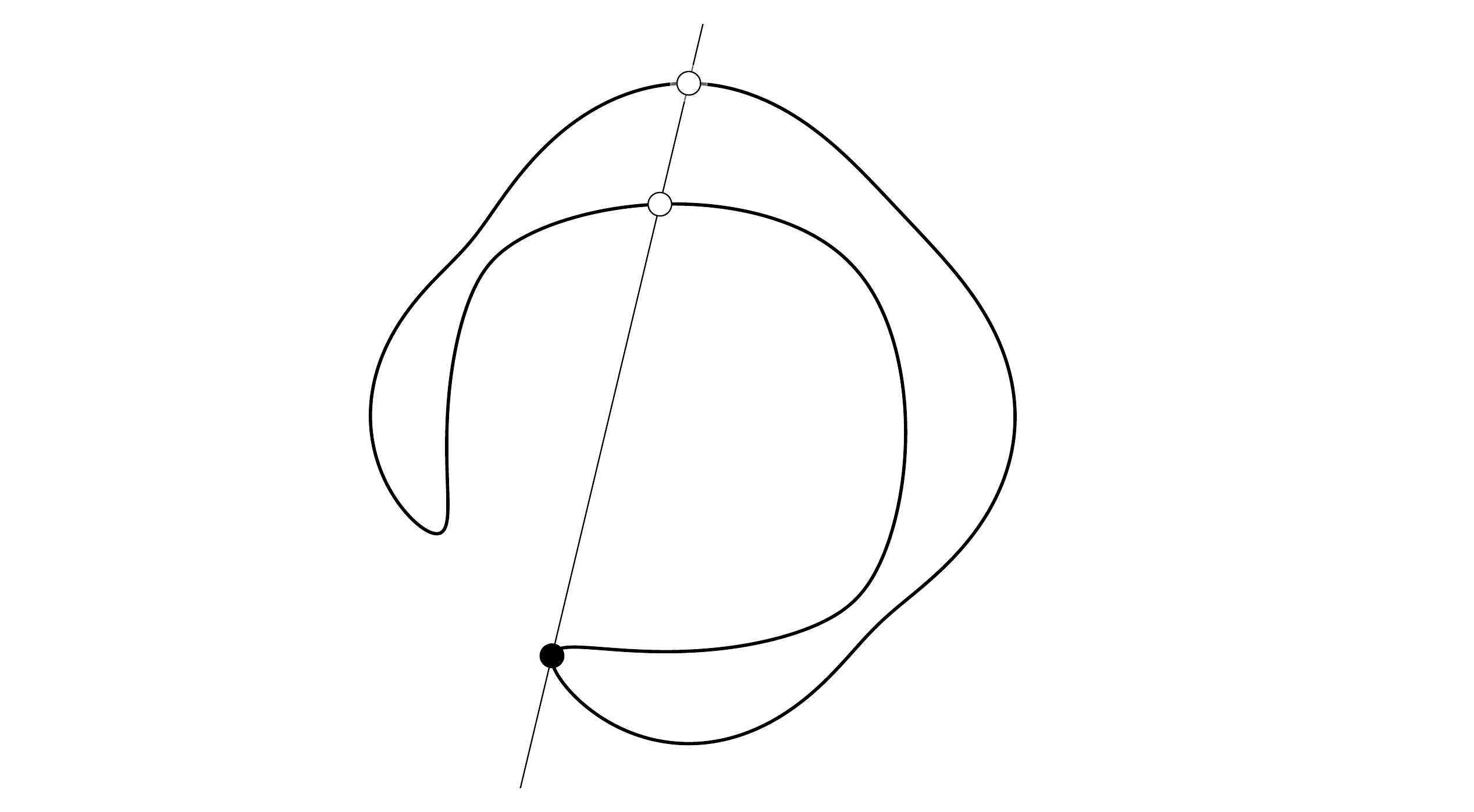}&
\includegraphics[scale=0.08,trim = 250 0 250 0,clip]{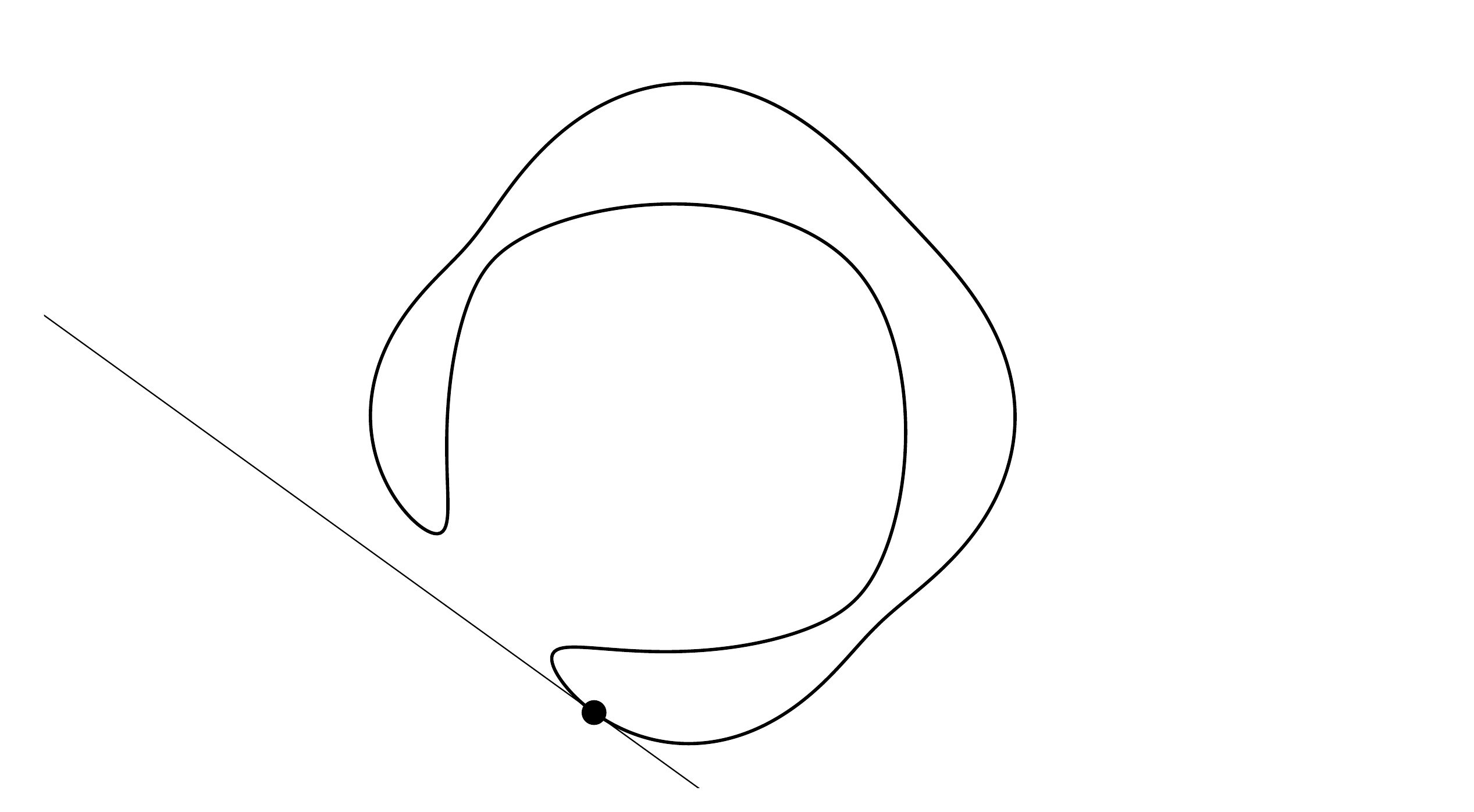}&&&
\includegraphics[scale=0.08,trim = 250 0 250 0,clip]{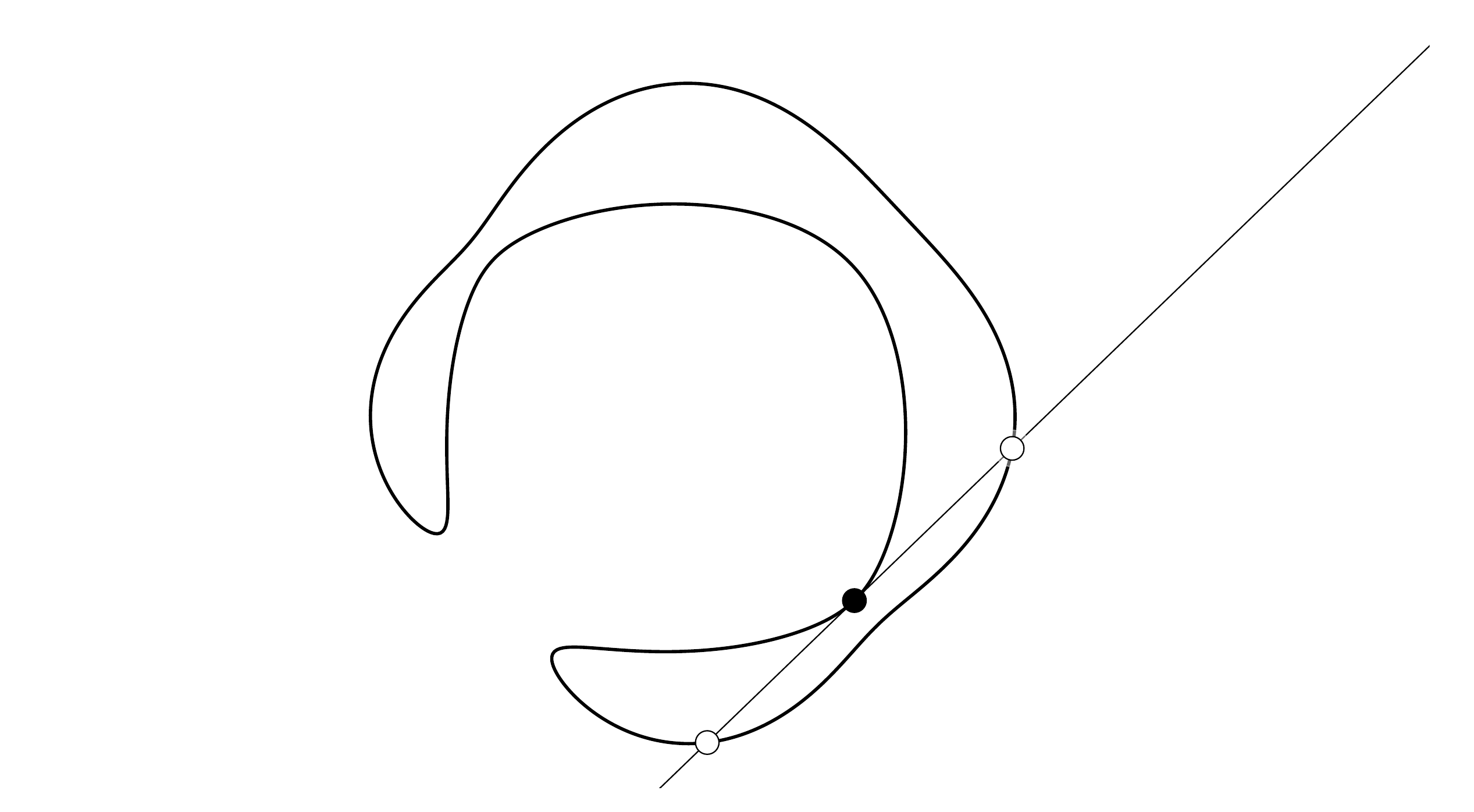}\\
\includegraphics[scale=0.08,trim = 250 0 250 0,clip]{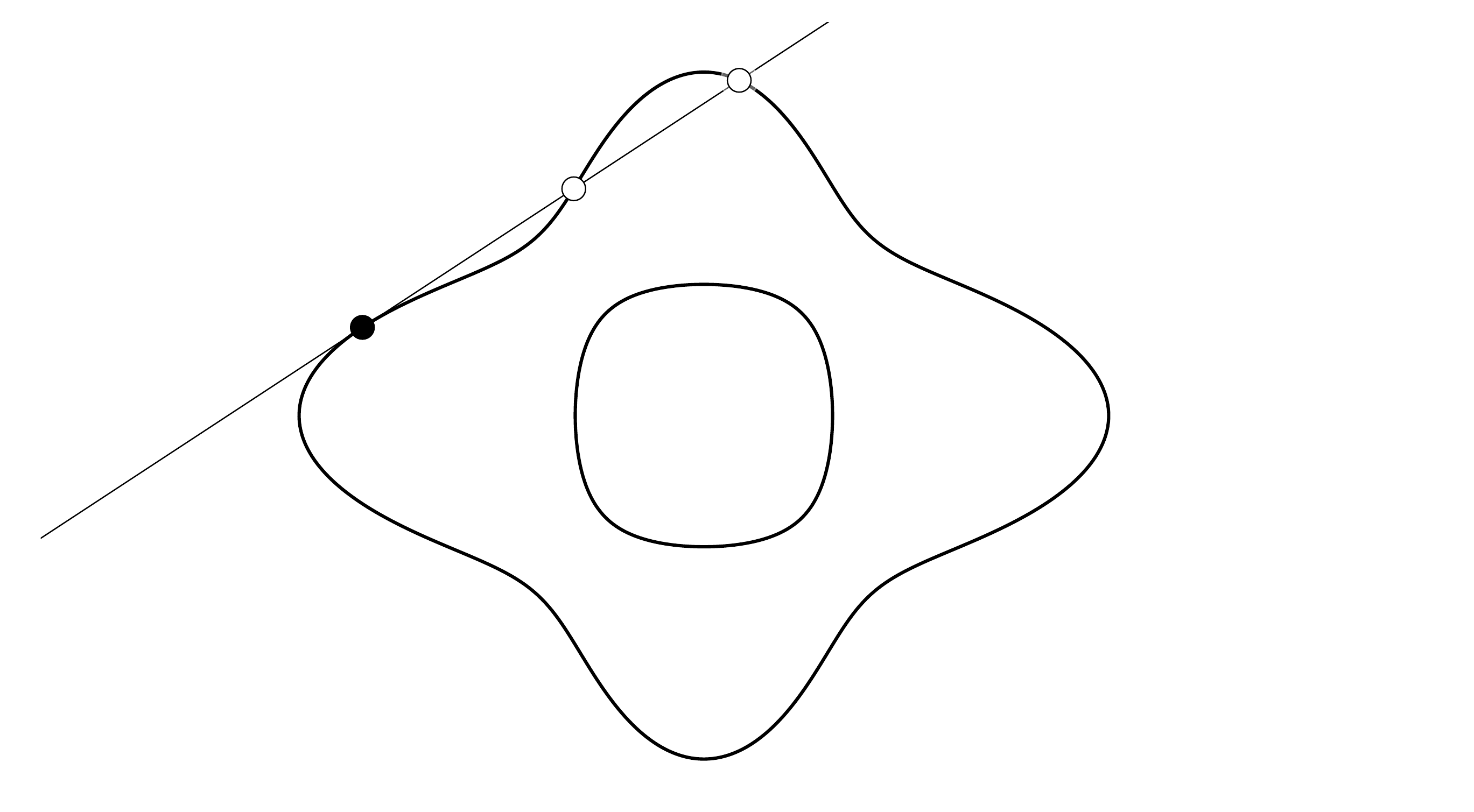}&
\includegraphics[scale=0.08,trim = 250 0 250 0,clip]{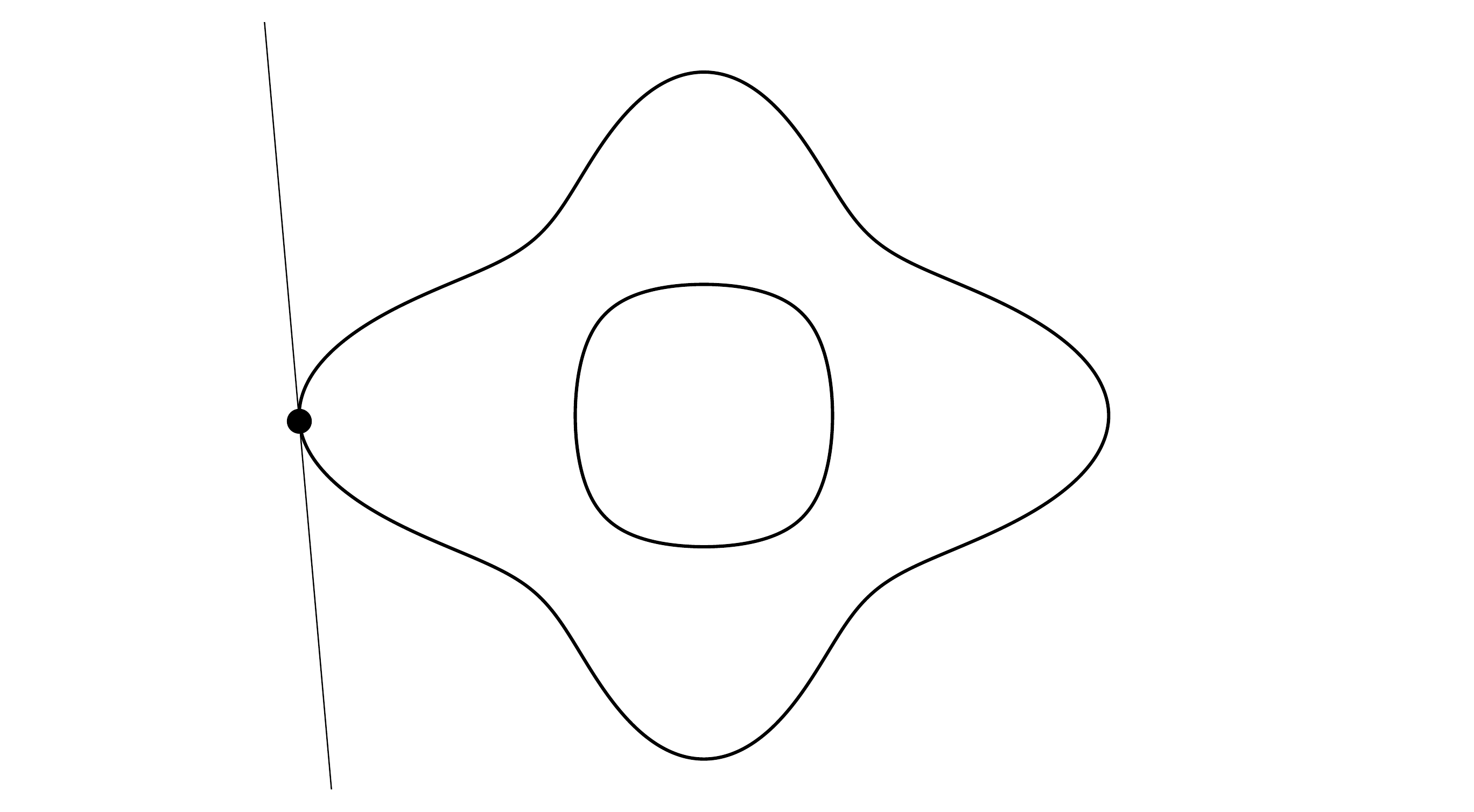}&&
\includegraphics[scale=0.08,trim = 250 0 250 0,clip]{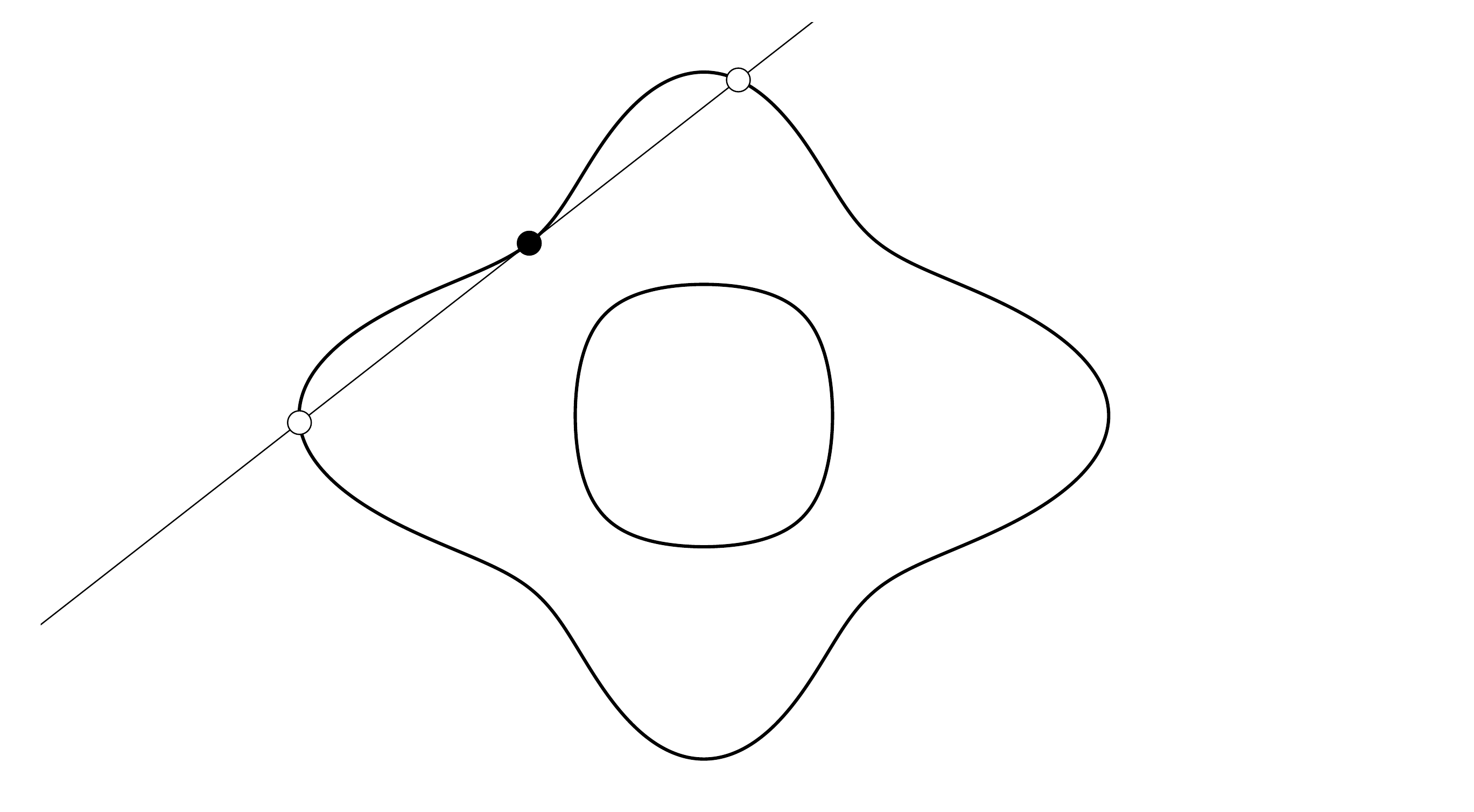}&
\includegraphics[scale=0.08,trim = 250 0 250 0,clip]{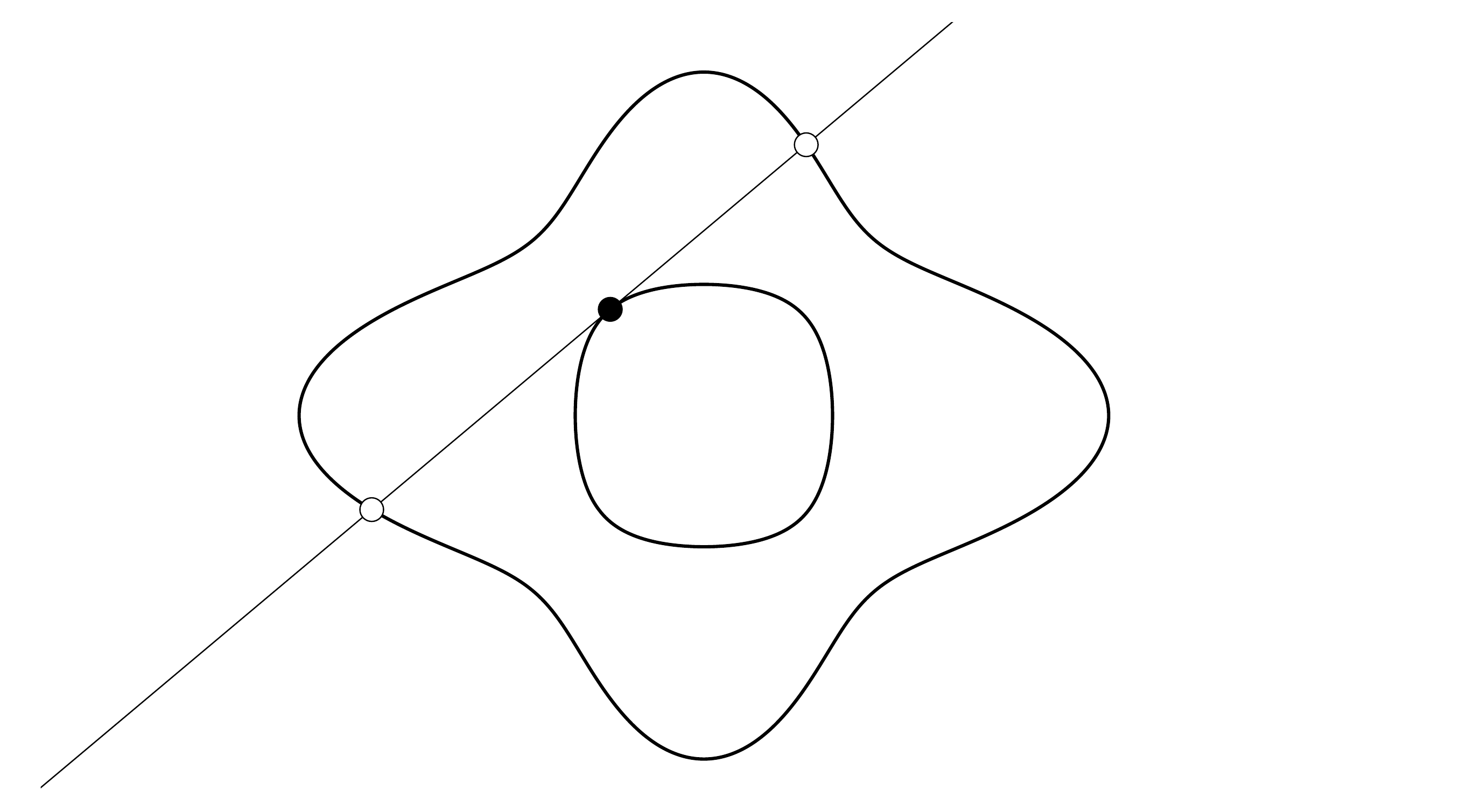}\\
\end{tabular}
\caption{The real points of representative curves for the $20$ connected components of the moduli space $(\mathcal{Q}_1^\circ)^\mathbb{R}$.}
\label{20classes}
\end{figure}

This classification is more subtle then the topological one: the second and fourth picture from the first row are topologically equivalent but represent different components in the moduli space. Something similar occurs in the study of smooth real plane curves of higher degree. For example there exist two smooth real plane curves of degree six with the same configuration of ovals that are non-isomorphic. For a discussion of this phenomenon we refer to \cite{DegtyarevKharlamov} Section $4.8$. 

Our second result concerns the structure of the $20$ components of the moduli space. For this we start by extending work of Looijenga \cite{Looijenga3}, \cite{Looijenga2} on moduli spaces of complex del Pezzo pairs to the real numbers. We focus on del Pezzo surfaces of degree two as these are related to plane quartic curves. A similar treatment could be given for del Pezzo surfaces of other degrees. To formulate our result we introduce some notation. Let $Q$ be a root lattice of type $E_7$ and define the complex adjoint torus $\mathbb{T}=\Hom(Q,\mathbb{C}^\ast)$. The Weyl group $W$ of type $E_7$ acts by reflections on $Q$ and thus on $\mathbb{T}$. The set of fixed points in $\mathbb{T}$ of a reflection in $W$ is called a toric mirror. We denote by $\mathbb{T}^\circ$ the complement of all toric mirrors in $\mathbb{T}$ for this group action of $W$. Now we can state our second result.

\begin{thm}\label{introthm}
Let $(\mathcal{Q}_1^\circ)^\mathbb{R}$ be the moduli space representing real isomorphism classes of pairs $(C,p)$ with $C$ a smooth real plane quartic curve and $p\in C(\mathbb{R})$ a general real point. There is an isomorphism of real orbifolds
\begin{equation}\label{introperiod}
(\mathcal{Q}_1^\circ)^\mathbb{R} \longrightarrow \left( W\backslash \mathbb{T}^\circ \right)(\mathbb{R})
\end{equation}
where by definition the right hand side consists of all $W$-orbits of $t\in \mathbb{T}^\circ$ such that $u\cdot t = \bar{t}$ for some involution $u\in W$.
\end{thm}

This paper is organized in two parts. The first part consists of Sections 2 through 6 and is dedicated to the proof of Theorem \ref{introthm}. The necessary preliminaries on del Pezzo surfaces and involutions in Weyl groups are presented in Sections 2 through 5 and the proof of Theorem \ref{introthm} is presented in Section 6. 

The second part consists of Section 7 through 9 and is dedicated to the proof of Theorem \ref{introthm1}. The main idea of the proof is to use the isomorphism of Equation \ref{introperiod} to study the moduli space $(\mathcal{Q}_1^\circ)^\mathbb{R}$ using results on root systems and involutions in Weyl groups on the right hand side. In Section $7$ we study the real points of such torus quotients and their connected components for general root systems of type $ADE$. For type $E_7$ we then prove that there are twenty connected components. In Sections 8 and 9 we relate these twenty components to the pictures of Figure \ref{20classes}. This completes the proof of Theorem \ref{introthm1}. A possible alternative approach to obtain the classification of Theorem \ref{introthm1} would be to consider trigonal curves with a single node in the Hirzebruch surface $\Sigma_2$. This construction is described in\cite{Zvonilov}. However this approach would not give a description of the moduli space and its components.

\begin{ack}
The author would like to thank Gert Heckman for suggesting this research topic and for many inspiring and useful discussions, and Professor Looijenga and Professor Kharlamov for interesting discussions. This research was supported by NWO free competition grant number 613.000.909. 
\end{ack}

\section{Involutions in Coxeter groups}\label{SectInvCox}

In order to make the right-hand side of Equation \ref{introperiod} more explicit we need to understand the conjugacy classes of involutions in the Weyl group of type $E_7$. Weyl groups can be realized as finite Coxeter groups. The classification of conjugacy classes of involutions in a Coxeter group was done by Richardson \cite{Richardson} and Springer \cite{Springer}. Before this the classification of conjugacy classes of elements of finite Coxeter groups was obtained by Carter \cite{Carter}. In this section we give a brief review of these results. 

\begin{dfn}\label{Coxeter}A Coxeter system is a pair $(W,S)$ with $W$ a group presented by a finite set of generators $S=\{s_1,\ldots,s_r\}$ subject to relations
\begin{align*}
(s_is_j)^{m_{ij}}=1 \quad \text{with} \quad 1\leq i,j \leq r
\end{align*}
where $m_{ii}=1$ and $m_{ij}=m_{ji}$ are integers $\geq 2$. We also allow $m_{ij}=\infty$ in which case there is no relation between $s_i$ and $s_j$. These relations are encoded by the Coxeter graph of $(W,S)$. This is a graph with $r$ nodes labeled by the generators. Nodes $i$ and $j$ are not connected if $m_{ij}=2$ and are connected by $m_{ij}-2$ edges otherwise. If $m_{ij}=\infty$ we connect the vertices by a thick edge.
\end{dfn} 

For a Coxeter system $(W,S)$ we define an action of the group $W$ on the real vector space $V$ with basis $\{e_s\}_{s\in S}$. First we define a symmetric bilinear form $B$ on $V$ by the expression
\[ B(e_i,e_j) = 2 \cos \left(\frac{\pi}{m_{ij}}\right). \]
Then for each $s_i\in S$ the reflection: $s_i(x)=x-B(e_i,x)e_i$ preserves this form $B$. In this way we obtain a homomorphism $W\rightarrow GL(V)$ called the geometric realization of $W$. For each subset $I \subseteq S$ we can form the standard parabolic subgroup $W_I<W$ generated by the elements $\{s_i ; i \in I \}$ acting on the subspace $V_I$ generated by $\{e_i\}_{i\in I}$. We say that $W_I$ (or also $I$) satisfies the $(-1)$-condition if there is a $w_I\in W_I$ such that $w_I \cdot x = -x$ for all $x\in V_I$. The element $w_I$ necessarily equals the longest element of $(W_I,S_I)$. This implies in particular that $W_I$ is finite. Let $I,J\subseteq S$, we say that $I$ and $J$ are $W$-equivalent if there is a $w\in W$ that maps $\{e_i \}_{i \in I}$ to $\{ e_j\}_{j \in J}$. Now we can formulate the main theorem of \cite{Richardson}.

\begin{thm}[Richardson]Let $(W,S)$ be a Coxeter system and let $\mathcal{J}$ be the set of subsets of $S$ that satisfy the $(-1)$-condition. Then 
\begin{enumerate}
\item If $c \in W$ is an involution, then $c$ is conjugate in $W$ to $w_I$ for some $I \in \mathcal{J}$.
\item Let $I,J \in \mathcal{J}$. The involutions $w_I$ and $w_J$ are conjugate in $W$ if and only if $I$ and $J$ are $W$-equivalent.
\end{enumerate}
\end{thm}

This theorem reduces the problem of finding all conjugacy classes of involutions in $W$ to finding all $W$-equivalent subsets in $S$ satisfying the $(-1)$-condition. First we determine which subsets $I\subseteq S$ satisfy the $(-1)$-condition, then we present an algorithm that determines when two subsets $I,J\subseteq S$ are $W$-equivalent. If $(W_I,S_I)$ is irreducible and satisfies the $(-1)$-condition then it is of one of the following types
\begin{align}\label{list}
A_1,B_n,D_{2n},E_7,E_8,F_4,G_2,H_3,H_4,I_2(2p)
\end{align}
with $n,p\in \mathbb{N}$ and $p\geq 4$. If $(W_I,S_I)$ is reducible and satisfies the $(-1)$-condition then $W_I$ is the direct product of irreducible, finite standard parabolic  subgroups $(W_i,S_i)$ from (\ref{list}). The Coxeter diagrams of the $(W_i,S_i)$ occur as disjoint subdiagrams of the types in the list of the diagram of $(W,S)$. The element $w_I$ is the product of the $w_{I_i}$ which act as $-1$ on the $V_{I_i}$. Now let $K \subseteq S$ be of finite type and let $w_K$ be the longest element of $(W_K,S_K)$. The element $\tau_K=-w_K$ defines a diagram involution of the Coxeter diagram of $(W_K,S_K)$ which is non-trivial if and only if $w_K \neq -1$. If $I,J\subseteq K$ are such that $\tau_K I = J$ then $I$ and $J$ are $W$-equivalent. To see this, observe that $w_Kw_I \cdot I = w_K \cdot (-I) = \tau_K I=J$. Now we define the notion of elementary equivalence.

\begin{dfn}We say that two subsets $I,J\subseteq S$ are elementary equivalent, denoted by $I \vdash J$, if $\tau_K I = J$ with $K=I\cup \{\alpha \} = J\cup \{ \beta \}$ for some $\alpha,\beta \in S$.
\end{dfn}

It is proved in \cite{Richardson} that $I$ and $J$ are $W$-equivalent if and only if they are related by a chain of elementary equivalences: $I = I_1 \vdash I_2 \vdash \ldots \vdash I_n = J$. This provides a practical algorithm to determine all the conjugacy classes of involutions in a given Coxeter group $(W,S)$ using its Coxeter diagram
\begin{enumerate}
\item Make a list of all the subdiagrams of the Coxeter diagram of $(W,S)$ that satisfy the $(-1)$-condition. These are exactly the disjoint unions of diagrams in the list (\ref{list}). Every involution in $W$ is conjugate to $w_K$ with  $K$ a subdiagram in this list. 
\item Find out which subdiagrams of a given type are $W$-equivalent by using chains of elementary equivalences.
\end{enumerate}

\begin{ex}[$E_7$]\label{invE7}We use the procedure described above to determine all conjugacy classes of involutions in the Weyl group of type $E_7$. This result will be used many times later on. Since $W_7$ contains the element $-1$ the conjugacy classes of involutions come in pairs $\{u,-u\}$. We label the vertices of the Coxeter diagram as in Figure \ref{labele7}

\begin{figure}
\begin{center}
\begin{tikzpicture}
  \tikzstyle{every node}=[circle,draw]  
    \node[label=60:$1$] (1) at ( 0,0) {};
    \node[label=60:$2$] (2) at ( 1,0) {};    
    \node[label=60:$3$] (3) at ( 2,0) {};
    \node[label=60:$4$] (4) at ( 3,0) {};
    \node[label=60:$7$] (7) at ( 2,1) {};
    \node[label=60:$5$] (5) at ( 4,0) {};
    \node[label=60:$6$] (6) at ( 5,0) {}; 
    \draw [-] (1) -- (2) -- (3) -- (4) -- (5) -- (6);
    \draw [-] (3) -- (7);   
\end{tikzpicture}
\caption{The labelling of the nodes of the $E_7$ diagram}
\label{labele7}
\end{center}
\end{figure}
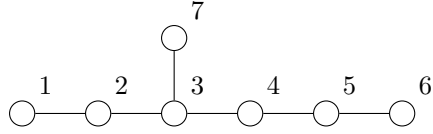

It turns out that all involutions of a given type are equivalent with the exception of type $A_1^3$: here there are two non-equivalent involutions as seen in Figure \ref{A13}.  The types of involutions that occur are
\begin{equation}\label{pairs}
\{ 1,E_7\} \ ,\ \{A_1,D_6\} \ ,\ \{A_1^2,D_4A_1\} \ ,\ \{A_1^3,A_1^4\} \ ,\ \{D_4,A_1^{3\prime}\}.
\end{equation}

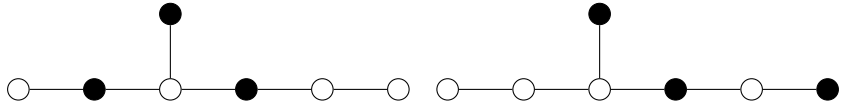
\begin{figure}
\centering
\subfigure{
\begin{tikzpicture}
	\node[circ] (1) at (0,0) {};
	\node[blackcirc] (2) at (1,0) {};
	\node[circ] (3) at (2,0) {};
	\node[blackcirc] (4) at (3,0) {};
	\node[circ] (5) at (4,0) {};
	\node[circ] (6) at (5,0) {};
	\node[blackcirc] (7) at (2,1) {};	
	\draw [-] (1) -- (2) -- (3) -- (4) -- (5) -- (6);
	\draw [-] (3) -- (7);	
\end{tikzpicture} 
}
\subfigure{
\begin{tikzpicture}
	\node[circ] (1) at (0,0) {};
	\node[circ] (2) at (1,0) {};
	\node[circ] (3) at (2,0) {};
	\node[blackcirc] (4) at (3,0) {};
	\node[circ] (5) at (4,0) {};
	\node[blackcirc] (6) at (5,0) {};
	\node[blackcirc] (7) at (2,1) {};	
	\draw [-] (1) -- (2) -- (3) -- (4) -- (5) -- (6);
	\draw [-] (3) -- (7);	
\end{tikzpicture} 
}
\caption[test]{The involutions $A_1^3$ (left) and $A_1^{3\prime}$ (right).}
\label{A13}
\end{figure}

For example, consider the two subdiagrams of type $A_1$ with vertices $\{1\}$ and $\{2\}$. The diagram automorphism $\tau_{\{1,2\}}$ which is of type $A_2$ exchanges the vertices $\{1\}$ and $\{2\}$, so they are elementary equivalent. One shows in a similar way that all diagrams of type $A_1$ are equivalent. 
\end{ex}

\section{Del Pezzo surfaces}

In this section we review the theory of del Pezzo surfaces over the real and the complex numbers. For del Pezzo surfaces over the complex numbers two excellent references for the proofs in this section are \cite{Manin} or \cite{DolOrt}.

\subsection{Complex del Pezzo surfaces}\label{ComplexDelPezzo}

\begin{dfn}A del Pezzo surface $Y$ is a smooth, complex projective surface whose anticanonical system $|-K_Y|$ is ample. The degree of $Y$ is the self-intersection number: $d=K_Y\cdot K_Y$ of the canonical class in the Picard group $\Pic(Y)$ of $Y$. It is an integer with $1\leq d \leq 9$.
\end{dfn}

A del Pezzo surface can be realized as the blowup of a configuration of points in the projective plane. This is expressed by the following theorem.

\begin{thm}\label{DP}
A del Pezzo surface of degree $d$ is isomorphic to either
\begin{enumerate}

\item The blowup $Y=\Bl_B \mathbb{P}^2$ of the projective plane in a set
\[ B=\{P_1,\ldots,P_{r}\} \subset \mathbb{P}^2(\mathbb{C}) \] 
of $r=9-d$ points in general position  ($1\leq d \leq 9$). A point set is in general position if no $3$ points are collinear, no $6$ are on a conic and no $8$ are on a cubic which is singular at one of these points. 

\item The smooth quadric $\mathbb{P}^1\times \mathbb{P}^1$ in which case $d=8$.
\end{enumerate}
\end{thm}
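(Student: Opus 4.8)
The plan is to reduce $X$ to a minimal model by successively contracting $(-1)$-curves, to pin down that minimal model, and then to extract both the blow-up description and the general position conditions from the absence of curves of very negative self-intersection. The two facts I would establish at the outset are: \emph{(a)} every del Pezzo surface is rational --- apply Kodaira vanishing with the ample divisor $-K$ to get $H^i(X,\mathcal{O}_X)=0$ for $i>0$, hence $q=p_g=0$, and since neither $K$ nor $2K$ is effective also $P_2=0$, so Castelnuovo's rationality criterion applies; and \emph{(b)} a del Pezzo surface contains no irreducible curve $C$ with $C^2\leq -2$ --- for irreducible $C$, adjunction gives $C^2=2p_a(C)-2-C\cdot K$ while ampleness of $-K$ forces $C\cdot K\leq -1$, so $C^2\geq -1$. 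Nakai--Moishezon also gives $(-K)^2>0$, so $d\geq 1$.

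Next I would repeatedly contract $(-1)$-curves (Castelnuovo's contractibility criterion); since each contraction drops the Picard number by one the process terminates at a minimal rational surface $X_0$, which is $\mathbb{P}^2$ or a Hirzebruch surface $\mathbb{F}_n$ with $n\neq 1$ by the classification of minimal rational surfaces. The strict transform on $X$ of the negative section of $\mathbb{F}_n$ has self-intersection at most $-n$, so \emph{(b)} rules out $n\geq 2$ and leaves $X_0\cong\mathbb{P}^2$ or $X_0\cong\mathbb{F}_0=\mathbb{P}^1\times\mathbb{P}^1$. If $X_0=\mathbb{F}_0$ but $X\neq X_0$, the morphism $X\to X_0$ factors through $\Bl_p\mathbb{F}_0$, and contracting the strict transform of one of the two rulings through $p$ identifies $\Bl_p\mathbb{F}_0$ with $\mathbb{P}^2$ blown up at two distinct proper points; hence, outside the case $X\cong\mathbb{P}^1\times\mathbb{P}^1$ (which forces $d=8$), Castelnuovo's factorisation theorem presents $X$ as $\Bl_B\mathbb{P}^2$ for a sequence $B$ of $r=9-d$ points, a priori possibly infinitely near. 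With $d\geq 1$ this yields $1\leq d\leq 9$.

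Finally I would show $B$ consists of distinct proper points in general position, each assertion following from \emph{(b)} by producing a curve of self-intersection $\leq -2$: if some $P_j$ is infinitely near $P_i$, the exceptional curve over $P_i$ has self-intersection $\leq -2$ on $X$; if three of the points lie on a line, its strict transform has self-intersection $1-3=-2$; if six lie on a conic, $4-6=-2$; if eight lie on a cubic with a double point at one of them, $9-4-7=-2$. Each contradicts \emph{(b)}, giving exactly the conditions in the statement.

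The substantive content all sits in the opening reduction --- Castelnuovo's rationality criterion, the classification of minimal rational surfaces, and Castelnuovo's contractibility and factorisation theorems --- after which the remainder is a bookkeeping of adjunction inequalities; so that chain of inputs is where I expect the real work (or, here, the appeal to standard references) to lie. The only other point needing care, more tedious than hard, is verifying that the contraction process is independent of choices up to reordering and that $\Bl_p(\mathbb{P}^1\times\mathbb{P}^1)\cong\Bl_{\{P,Q\}}\mathbb{P}^2$; I would isolate the latter as a small lemma rather than revisit it.
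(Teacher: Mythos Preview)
The paper does not prove this theorem at all: it is stated as a classical background result at the start of Section~\ref{ComplexDelPezzo}, with proofs deferred to the references \cite{Demazure}, \cite{Dolgachevbook}, \cite{Manin}, \cite{Pinkham}. So there is no ``paper's own proof'' to compare against.

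That said, your outline is correct and is essentially the standard argument one finds in those references (Manin's book in particular). The logic --- rationality via Castelnuovo, the bound $C^2\geq -1$ from adjunction and ampleness of $-K$, reduction to a minimal rational surface, elimination of $\mathbb{F}_n$ for $n\geq 2$ via the negative section, the elementary transformation $\Bl_p\mathbb{F}_0\cong\Bl_{\{P,Q\}}\mathbb{P}^2$, and finally reading off the general-position constraints as the absence of $(-2)$-curves --- is exactly the textbook route. Your self-intersection computations for the line, conic, and nodal cubic are right, and your acknowledgement that the heavy lifting is borrowed (Castelnuovo's criterion, classification of minimal rational surfaces, factorisation of birational morphisms) is honest and appropriate. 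One small addition worth making explicit: the upper bound $d\leq 9$ drops out once you know $X\to\mathbb{P}^2$ is a composite of $r\geq 0$ point blow-ups, since $K_X^2=9-r$.
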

From now on we only consider del Pezzo surfaces of the first kind. Exhibiting a del Pezzo surface as a blowup $\pi:Y\rightarrow \mathbb{P}^2$ fixes a basis of the Picard group $\Pic(Y)$. This basis consists of the classes $E_i=\pi^{-1}(P_i)$ with $1\leq i \leq r$ of the exceptional curves over the blown up points and the class $E_0$ of the strict transform of a general line in $\mathbb{P}^2$. The anticanonical class expressed in this basis of $\Pic(Y)$ is given by
\[ -K_Y = 3E_0-E_1-\ldots-E_r. \]
It is represented by the strict transform of a cubic in $\mathbb{P}^2$ through the points $B=\{P_1,\ldots,P_r\}$. We also write $\Pic^0(Y)$ for the orthogonal complement of $-K_Y$ in $\Pic(Y)$. From the description of a del Pezzo surface as a blowup of $\mathbb{P}^2$ it follows that the Picard group $\Pic(Y)$ is isomorphic to the hyperbolic lattice $\mathbb{Z}_{1,r}$ of rank $r+1$ and signature $(1,r)$. It has a basis $\{e_0,\ldots,e_r\}$ with inner product defined by the relations
\[
\left\{
\begin{array}{ll}
e_0 \cdot e_0 =1 & \\
e_i\cdot e_i = -1 & \text{for} \ 1\leq i \leq r \\
e_i\cdot e_j = 0 &  \text{for} \ i\neq j.  
\end{array}
\right.
\]
An isomorphism $\phi: \mathbb{Z}_{1,r} \rightarrow \Pic(Y)$ is called a \emph{marking} of the del Pezzo surface $Y$ if it maps the element $k=-3e_0+e_0+\ldots+e_r$ to the canonical class $K_Y$ of $\Pic(Y)$. An isomorphism $(Y,\phi)\cong (Y^\prime,\phi^\prime)$ of marked del Pezzo surfaces is an isomorphism $F:Y\rightarrow Y^\prime$ such that the following diagram commutes.
\begin{center}
\begin{tikzcd}
\mathbb{Z}_{1,r} \arrow{r}{\phi} \arrow{dr}{\phi^\prime} & \Pic(Y) \arrow{d}{F_\ast} \\
& \Pic(Y^\prime)
\end{tikzcd}
\end{center} 

Exhibiting a del Pezzo surface as a blowup $\pi:Y\rightarrow \mathbb{P}^2$ is equivalent to adding a marking to $Y$; from the marking $\phi$ we recover the blowup map by blowing down the exceptional curves $\phi(e_i)$ for $1\leq i \leq r$. This determines a set $B=\{P_1,\ldots,P_r\}$ of $r$ points in general postition in $\mathbb{P}^2$. If two marked del Pezzo surfaces are isomorphic then the corresponding point sets $B$ and $B^\prime$ are related by an element of $\PGL(3,\mathbb{C})$. As a consequence the elements of the space 
\begin{equation}
\widetilde{\mathcal{DP}}_d=\left( (\mathbb{P}^2)^r - \Delta \right) / \PGL(3,\mathbb{C})
\end{equation}
represent isomorphism classes of marked del Pezzo surfaces of degree $d=9-r$. Here $\Delta$ denotes the set of configurations of $r$ points in $\mathbb{P}^2$ \emph{not} in general position in the sense of Theorem \ref{DP}. For an $r$-tuple of points in $\mathbb{P}^2$ in general position with $r\geq 4$ there is a unique element of $\PGL(3,\mathbb{C})$ that maps the points to the configuration of points represented by the columns of the matrix
\[ 
\begin{pmatrix}
1 & 0 & 0 & 1 & x_1 & \ldots & x_{r-4} \\
0 & 1 & 0 & 1 & y_1 & \ldots & y_{r-4} \\
1 & 1 & 1 & 1 & 1 & \dots & 1
\end{pmatrix}.
\]
This implies that $\widetilde{\mathcal{DP}}_d$ is isomorphic to an open subset of $(\mathbb{A}^2)^{r-4}$.

\subsection{The Cremona action of the Weyl group}\label{Weylgroup}

The stabilizer of the element $k=-3e_0+e_1+\ldots+e_r$ in the orthogonal group $O(\mathbb{Z}_{1,r})$ is a finite Coxeter group of type
\begin{equation}\label{delpezzotype}A_1A_2,A_4,D_5,E_6,E_7,E_8 
\end{equation} 
for $r=3,\ldots,8$. A set of generators $S=\{s_1,\ldots,s_r\}$ for $W_r$ is given by the reflections in the simple roots
\[ \alpha_1=e_1-e_2 \ , \ldots, \ \alpha_{r-1}=e_{r-1}-e_{r} \ , \ \alpha_{r}=e_0-e_1-e_2-e_3. \]
These root span a lattice $Q_r$ that is precisely the orthogonal complement of $k^\perp$ in $\mathbb{Z}_{1,r}$. The group $W_r$ acts on marked del Pezzo surfaes by composing with the marking: $w\cdot (X,\phi)=(X,\phi \circ w^{-1})$ for $w\in W_r$. This action is simply transitive so that the orbit space $\mathcal{DP}_d=W_r \backslash \widetilde{\mathcal{DP}}_d$ is a coarse moduli space for del Pezzo surfaces of degree $d$.

We now describe this action on the set of blown up points in $\mathbb{P}^2$. Suppose $(X,\phi)$ is a marked del Pezzo surface and $\pi:X\rightarrow \mathbb{P}^2$ is the corresponding blowing up map with $B\subseteq \mathbb{P}^2$ the set of blown up points. If $\phi^\prime$ is another marking of $X$ then $\phi^\prime = \phi \circ w$ for some element $w\in W_r$. The element $w$ defines a birational transformation $\rho(w)$ of $\mathbb{P}^2$ in the following way: first blow up $\mathbb{P}^2$ in the points of $B$. Then blow down the exceptional curves $\phi^\prime(e_i)=\phi(w\cdot e_i)$ for $1\leq i \leq r$. This determines a new set of points $B^\prime$ and blowup map $\pi^\prime:X\rightarrow \mathbb{P}^2$ corresponding to $\phi^\prime$ such that the following diagram commutes.
\begin{center}
\begin{tikzcd}
X  \arrow{d}{\pi} \arrow{dr}{\pi^\prime} & \\
\mathbb{P}^2 \arrow[dashed]{r}{\rho(w)} & \mathbb{P}^2
\end{tikzcd}
\end{center}
In this way we obtain a homomorphism of the Weyl group $W_r$ to the group of birational transformations of $\mathbb{P}^2$
\[\rho: W_r \rightarrow \Bir(\mathbb{P}^2)\]
We can calculate this representation on the set $S$ of simple reflections. The element $\rho(s_i)$ with $1 \leq i \leq r-1$ acts on $\mathbb{Z}_{r,r}$ by
\begin{align*}
e_i &\mapsto e_j \\
e_j &\mapsto e_i \\
e_k &\mapsto e_k \quad k\neq i,j. 
\end{align*}
so that it corresponds to the transposition of the points $P_i$ and $P_{i+1}$. The element $s_r$ gives a more interesting transformation. It acts on $\mathbb{Z}_{1,r}$ by
\begin{align*}
e_1 &\mapsto e_0-e_2-e_3 \\
e_2 &\mapsto e_0-e_1-e_3 \\
e_3 &\mapsto e_0-e_1-e_2 \\
e_i &\mapsto e_i   \quad 4\leq i \leq r.
\end{align*} 
Geometrically this means that $\rho(s_r)$ is obtained by first blowing up $P_1,P_2$ and $P_3$ and then blowing down the strict transforms of the lines connecting them. This birational transformation $\rho(s_r)$ is called the standard triangular Cremona transformation based in $P_1,P_2$ and $P_3$. A simple calculation shows that $s_7(2e_0-e_1-e_2-e_3)=e_0$ so that the image of a conic through $P_1,P_2,P_3$ under the standard triangular Cremona transformation is a line. If we assume that these points are 
\[P_1=(1:0:0) \ ,\ P_2=(0:1:0) \ ,\ P_3=(0:0:1).\]
then $\rho(s_r)$ is given by $(x:y:z) \mapsto (yz:xz:xy)$. To summarise: the group $W_r$ acts on $(\mathbb{P}^2)^r-\Delta$ by permuting the points and by standard triangular Cremona transformations centered in triples of distinct points. 

\subsection{Del Pezzo surfaces of degree two}\label{dpdegree2}
Suppose that $Y$ is a del Pezzo surface of degree two so that it is isomorphic to the blowup of the projective plane $\mathbb{P}^2$ in $7$ points. The anticanonical system of $Y$ defines a morphism 
\[ \vert -K_Y \vert :Y \rightarrow \mathbb{P}^2.\]
It is a double cover of $\mathbb{P}^2$ branched along a smooth quartic curve $C\subset \mathbb{P}^2$. Conversely a smooth quartic $C=\{ f(x,y,z)=0 \}$ determines a del Pezzo surface $Y$ of degree two by the formula
\begin{equation}\label{ddelpezzo}
Y = \{ w^2 = f(x,y,z) \} \subset \mathbb{P}(2,1,1,1).
\end{equation} 
Consequently every del Pezzo surface $Y$ has a special involution that corresponds to the deck transformation of the double cover $Y\rightarrow \mathbb{P}^2$. This is called the Geiser involution. In terms of Equation \ref{ddelpezzo} this involution is given by
\[ \rho_Y : [w:x:y:z] \mapsto [-w:x:y:z].\]  
If we choose a marking $\phi:\Pic(Y)\rightarrow \mathbb{Z}_{1,7}$ then the induced involution on the lattice $\mathbb{Z}_{1,7}$ is given by
\begin{equation}\label{geisereq}
\rho:x \mapsto -x+(x\cdot k)k.
\end{equation}
It fixes the element $k$ and acts as $-1$ on $k^\perp$ so that it corresponds to the central element $-1\in W(E_7)$. An element $e\in \mathbb{Z}_{1,7}$ that satisfies $e\cdot e=-1$ and $e\cdot k=-1$ is called \emph{exceptional}. The set $\mathcal{E}$ of exceptional elements forms a single $W(E_7)$-orbit and consists of the $56$ elements
\begin{enumerate}
\item $e_i$ with $1\leq i \leq 7$, the class of the exceptional divisor $E_i$.
\item $l_{ij}=e_0-e_i-e_j$, the class of the strict transform of the line $L_{ij}$ through $P_i$ and $P_j$. 
\item $c_{ij}=-k-l_{ij}=2e_0-e_1-\ldots-\hat{e}_i-\ldots-\hat{e}_j-\ldots -e_7$, the class of the strict transform of the conic  $C_{ij}$ through $5$ of the $7$ points. 
\item $k_i=-k-e_i=3e_0-e_1-\ldots-2e_i-\ldots -e_7$, the class of the strict transform of the cubic $K_i$ through $6$ points with a node at a seventh point. 
\end{enumerate}
The elements of $\mathcal{E}$ come in $28$ pairs $(e_i,k_i)$, $(l_{ij},c_{ij})$ whose elements are interchanged by the involution $\rho$. The geometric meaning of this is as follows. An exceptional element $E\in \Pic(Y)$ corresponds to a exceptional curve on the del Pezzo surface $Y$ and there are $56$ of these. The two elements of a pair $(E,\rho_Y(E))$ are mapped to a single bitangent of the quartic curve by the anticanonical map. This accounts for all $28$ bitangents of a smooth plane quartic curve.

\subsection{Real del Pezzo surfaces}\label{realdpsurfaces}
We review some results of Wall \cite{Wall} on real del Pezzo surfaces. Other references on this subject are Koll\'ar \cite{Kollar} and Russo \cite{Russo}. A real del Pezzo surface is a pair $(Y,\chi_Y)$ with $Y$ a complex del Pezzo surface and $\chi_Y:Y\rightarrow Y$ a real form on $Y$. The real points $Y(\mathbb{R})$ of $Y$ are the fixed points under $\chi_Y$. The action of $\chi$ induces an involution $\chi_Y^\ast$ on the Picard group $\Pic(Y)$ which preserves the canonical class $K$ and the intersection product. By fixing a marking $\phi:\mathbb{Z}_{1,r}\rightarrow \Pic(Y)$ we obtain an involution of the lattice $\mathbb{Z}_{1,r}$  by the formula
\[ \chi = \phi^{-1}\circ \chi_Y^\ast \circ \phi.\]
The involution $\chi$ preserves the element $k\in \mathbb{Z}_{1,r}$. As we have seen such an involution corresponds to an involution $u$ in the Weyl group $W_r$. The conjugacy class of this involution in $W_r$ is an important invariant of the real structure on $Y$.

A real del Pezzo surface $Y$ of degree two is the double cover of the projective plane $\mathbb{P}^2$ ramified over a smooth real plane quartic curve $C\subset \mathbb{P}^2$ so that
\[ Y = \{ w^2 = f(x,y,z) \}. \]
We fix the sign of $f$ so that $f>0$ on the orientable interior part of $C(\mathbb{R})\subset \mathbb{P}^2(\mathbb{R})$. By using the deck transformation $\rho_Y$ of the cover we see that there are two real forms of $Y$:
\begin{equation}
\begin{aligned}
\chi_Y^+: \left[ w:x:y:z \right] & \mapsto [\bar{w}:\bar{x}:\bar{y}:\bar{z}] \\
\chi_Y^-: [w:x:y:z] & \mapsto [-\bar{w}:\bar{x}:\bar{y}:\bar{z}]. \\
\end{aligned}
\end{equation}\label{realstructuredelpezzo}
These real forms satisfy: $\chi_Y^-=\rho_Y \circ \chi_Y^+$ and we denote the real point sets of $\chi_Y^+$ and $\chi_Y^-$ by $Y^+(\mathbb{R})$ and $Y^-(\mathbb{R})$ respectively. Note that $Y^+(\mathbb{R})$ is an orientable surface while $Y^-(\mathbb{R})$ is non-orientable. 

In \cite{Wall} Wall determines the correspondence between the conjugacy classes of the $u\in W(E_7)$ and the topological type of $Y(\mathbb{R})$. The results are shown in Table \ref{realdelpezzotable}. We use the notation $kX$ for the disjoint union and $\#_k X$ for the connected sum of $k$ copies of a real surface $X$. From this table we see that except for the classes of $D_4$ and $A_1^{3\prime}$ the conjugacy class of $u\in W(E_7)$ determines the topological type of the real plane quartic curve $C(\mathbb{R})$.

\begin{table}

\[
\begin{array}{ccll}
\toprule
j& C(\mathbb{R}) & u\in W(E_7) & Y(\mathbb{R}) \\
\midrule
\multirow{2}{*}{1} & \multirow{2}{*}{\begin{tikzpicture}\draw (45:10pt) circle [radius=5pt];\draw (135:10pt) circle [radius=5pt];\draw (-135:10pt) circle [radius=5pt];\draw (-45:10pt) circle [radius=5pt];\end{tikzpicture}} & 1 & \#_8\mathbb{P}^2(\mathbb{R}) \\
& & E_7 & 4S^2\\
\\
\multirow{2}{*}{2} & \multirow{2}{*}{\begin{tikzpicture}\draw (0:8pt) circle [radius=5pt];\draw (120:8pt) circle [radius=5pt];\draw (240:8pt) circle [radius=5pt];\end{tikzpicture}} & A_1 & \#_6\mathbb{P}^2(\mathbb{R}) \\
& & D_6 & 3S^2\\
\\
\multirow{2}{*}{3} & \multirow{2}{*}{\begin{tikzpicture}\draw (0:7pt) circle [radius=5pt]; \draw (180:7pt) circle [radius=5pt];\end{tikzpicture}} & A_1^2 & \#_4\mathbb{P}^2(\mathbb{R}) \\
& & D_4A_1 & 2S^2\\
\\
\multirow{2}{*}{4} & \multirow{2}{*}{\begin{tikzpicture}\draw (0:0) circle [radius=5pt];\end{tikzpicture}} & A_1^3 & \#_2\mathbb{P}^2(\mathbb{R}) \\
& & A_1^4 & S^2\\
\\
\multirow{2}{*}{5} & \multirow{2}{*}{ \begin{tikzpicture}[baseline = -3pt]\draw (0,0) circle [radius=8pt];\draw (0,0) circle [radius=4pt];\end{tikzpicture}} & D_4 & S^2 \sqcup \#_2\mathbb{P}^2(\mathbb{R}) \\
& & A_1^{3\prime} & S^1 \times S^1 \\
\\
 \multirow{2}{*}{6} & \multirow{2}{*}{ $\emptyset$ } & D_4 &  2\mathbb{P}^2(\mathbb{R}) \\
&  & A_1^{3\prime} & \emptyset \\
\bottomrule
\end{array}
\]
\caption{The real topological types of real del Pezzo surfaces of degree two and their corresponding involutions in the Weyl group $W(E_7)$.}
\label{realdelpezzotable}

\end{table}

\section{Moduli of del Pezzo pairs}

In this section we study del Pezzo surfaces obtained by blowing up $r$ points on a fixed plane singular cubic. The strict transform of this cubic is a singular anti-canonical curve on the del Pezzo surface. This is the situation studied by Looijenga for del Pezzo surfaces of degree two in \cite{Looijenga3} and for general del Pezzo surfaces in \cite{Looijenga2}. In this article we restrict ourselves to the case of del Pezzo surfaces of degree two. 

\begin{dfn}A del Pezzo pair of degree two is a pair $(Y,Z)$ consisting of a del Pezzo surface $Y$ of degree two and a singular anti-canonical curve $Z\subset Y$. We denote the moduli space of del Pezzo pairs of degree two by $\mathcal{DPP}_2$. By adding a marking to the del Pezzo surface $Y$ we obtain a  marked del Pezzo pair $(Y,Z,\phi)$ with $\phi:\mathbb{Z}_{1,7}\rightarrow \Pic(Y)$ a marking of $Y$. The moduli space of marked del Pezzo pairs is denoted by $\widetilde{\mathcal{DPP}}_2$. 
\end{dfn}

The smooth points of an irreducible plane cubic admit a group law. For smooth cubics this is well known. A similar construction for the group law can be applied to singular irreducible cubics as follows. Let $Z$ be a irreducible plane cubic curve and let $O$ be an inflection point of $Z$. The map

\begin{align*}
Z^{\ns}(\mathbb{C}) &\rightarrow \Pic^0(Z) \\
P &\mapsto [P]-[O] 
\end{align*}
is a bijection and defines a group law on $Z^{\ns}(\mathbb{C})$. For a nodal cubic it is well known that there is an isomorphism of groups: $\Pic^0(Z)\cong \mathbb{C}^\ast$. It is unique up to multiplication by an element of $\Aut(\mathbb{C}^\ast) \cong \{\pm1\}$. Similarly for a cuspidal cubic we have an isomorphism $\Pic^0(Z)\cong \mathbb{C}$ that is unique up to multiplication by an element of $\Aut(\mathbb{C}) \cong \mathbb{C}^\ast$.  A useful property of the group law is the following.

\begin{prop}\label{curves}
Let $Z$ be a plane cubic curve and let $P_1,\ldots P_{3d}$ be points on $Z^{\ns}$. Then $\sum_{i=1}^{3d}P_i=0$ if and only if $\{P_1,\ldots,P_{3d}\}=C^{ns}\cap D$ for some plane curve $D$ of degree $d$. In particular three points of $Z^{\ns}$ add up to zero if and only if they are colinear.
\end{prop}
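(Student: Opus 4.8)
The plan is to reduce the statement to the classical Cayley–Bacharach / AF+BG-type results for plane curves, being careful about what happens on the singular locus of $Y$. The key point is that $Y^{\ns}(\mathbb{C})$ with the group law coming from $P \mapsto [P]-[\mathcal{O}]$ is isomorphic, as a group, to $\Pic^0(Y) \cong \mathbb{C}^\ast$ (nodal case) or $\mathbb{C}$ (cuspidal case), and under this isomorphism the relation $\sum_{i=1}^{3d} P_i = 0$ in the group becomes the statement that the divisor $\sum P_i$ is linearly equivalent on $Y$ to $3d \cdot [\mathcal{O}]$, i.e.\ to the divisor cut out by a degree-$d$ curve meeting $Y$ only at the inflection point $\mathcal{O}$ (with multiplicity $3d$). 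So the real content is: a degree-$d$ curve $D$ cuts out on $Y^{\ns}$ a divisor that is linearly equivalent to $3d[\mathcal{O}]$, and conversely every effective divisor of degree $3d$ on $Y^{\ns}$ linearly equivalent to $3d[\mathcal{O}]$ arises this way.

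First I would establish the forward direction. If $D$ has degree $d$ and $Y$ has degree $3$ and they share no component, then by Bézout $D \cap Y$ is a divisor of degree $3d$ on $Y$; assuming all $3d$ intersection points lie on $Y^{\ns}$ (which is part of the hypothesis, since we are told the $P_i$ lie on $Y^{\ns}$), this divisor is $\sum P_i$. Now two degree-$d$ curves $D, D'$ cut out linearly equivalent divisors on $Y$ because their difference is the divisor of the rational function $D/D'$ restricted to $Y$. Taking $D'$ to be a union of $d$ generic lines, each meeting $Y$ in $3$ collinear points summing to $0$ (the $d=1$ case, which is the definition of the chord-tangent group law together with the fact that $\mathcal{O}$ is an inflection point, so three collinear points are linearly equivalent to $3[\mathcal{O}]$), we get that $\sum P_i \sim 3d[\mathcal{O}]$, i.e.\ $\sum P_i = 0$ in the group. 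For the converse, given points $P_1, \dots, P_{3d} \in Y^{\ns}$ with $\sum P_i = 0$, I would produce the curve $D$: choose for instance the $d=1$ case first (three collinear points is essentially the definition), then build up degree $d$ inductively, or more cleanly invoke that $\dim |3d[\mathcal{O}]|$ on $Y$ (via Riemann–Roch on the normalization, tracking the node/cusp) matches $\dim$ of the linear system of degree-$d$ plane curves modulo those containing $Y$, so surjectivity of the restriction map from degree-$d$ forms to sections of $\mathcal{O}_Y(d)$ gives the desired $D$.

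The step I expect to be the main obstacle is handling the singular point of $Y$ correctly: since $Y$ is singular, $\Pic(Y)$ is not simply a quotient of divisors supported on smooth points in the naive way, and one must check that restriction $H^0(\mathbb{P}^2, \mathcal{O}(d)) \to H^0(Y, \mathcal{O}_Y(d))$ is surjective and that $h^0(Y,\mathcal{O}_Y(d)) = 3d$ for the nodal or cuspidal cubic — this is where one uses that $Y$ is arithmetic genus $1$ and $\mathcal{O}_Y(d)$ has degree $3d \geq 1$, so $h^1$ vanishes. One also needs that generic degree-$d$ curves miss the singular point of $Y$, so that the intersection divisor genuinely lives on $Y^{\ns}$; this is automatic for a generic choice. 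The ``in particular'' clause about three collinear points is then just the $d=1$ instance, combined with the fact that $\mathcal{O}$ being an inflection point means the line $T_{\mathcal{O}}Y$ cuts out $3[\mathcal{O}]$, so the group law is indeed the chord–tangent one and collinearity $\Leftrightarrow$ sum zero.
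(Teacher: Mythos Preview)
Your proposal is correct, but it takes a substantially longer route than the paper. The paper's proof is a two-line argument: the condition $\sum P_i = 0$ means precisely that $\sum_{i=1}^{3d}(P_i - \mathcal{O})$ is principal on $Y$, hence equals $\Div(f/g^d)$ where $g$ is the equation of the flex line at $\mathcal{O}$ (so that $\Div(g^d) = 3d\,\mathcal{O}$) and $f$ is some homogeneous form of degree $d$; then $D = V(f)$ is the required curve, and conversely. This uses only the elementary fact that rational functions on a plane curve $Y$ are restrictions of ratios of homogeneous forms of equal degree on $\mathbb{P}^2$.

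Your approach via Riemann--Roch on the singular curve, the dimension count $h^0(Y,\mathcal{O}_Y(d)) = 3d$, and surjectivity of $H^0(\mathbb{P}^2,\mathcal{O}(d)) \to H^0(Y,\mathcal{O}_Y(d))$ is not wrong, and indeed this surjectivity is exactly what underlies the paper's claim that the numerator $f$ exists. But you are proving that surjectivity abstractly, whereas the paper simply invokes the concrete description of rational functions on $Y$ and the convenient choice of denominator $g^d$. Your forward direction (reducing to $d$ generic lines) is also correct but unnecessary once one has the $f/g^d$ description, which handles both directions at once. The payoff of your longer route is that it makes explicit the cohomological input (vanishing of $h^1$ for degree $\geq 1$ line bundles on an arithmetic-genus-$1$ curve), which the paper's argument leaves implicit; the payoff of the paper's route is brevity.
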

\begin{proof}The condition $\sum_{i=1}^{3d}P_i=0$ is equivalent to $\sum_{i=1}^{3d}\left( [P_i]-[O] \right)=0$ in $\Pic^0(C)$. This implies that the divisor $\sum_{i=1}^{3d}\left(P_i -O\right)$ is principal of the form $\Div(f/g^d)$ with $g$ the equation of the flex line at $O$ and $f$ a homogeneous polynomial of degree $d$ which defines the curve $D$. 
\end{proof}

Suppose that $(Y,Z,\phi)$ is a marked del Pezzo pair and $\pi:Y\rightarrow \mathbb{P}^2$ is the corresponding blowup map. The image $\pi(Z)$ is a plane cubic through the seven points $B=\{P_1,\ldots,P_7\}$. In Table \ref{Pezzostrata} we distinguish four cases according to the type (nodal or cuspidal) of $Z_B$  and the location of the points $B$. We will use the symbols used in Kodaira's classification of the singular fibers of an elliptic pencil to denote the type of the curve $Z$.

\begin{table}
\centering
\begin{tabular}{l  p{9cm}}
\toprule
$Z$ & $Z_B$ \\
\midrule
$I_1$ & Irreducible cubic with a node and $B\subset Z^{\ns}$. \\
$II$ & Irreducible cubic with a cusp and $B\subset Z^{\ns}$. \\
$I_2$ & Irreducible cubic with a node that coincides with a blown up point or reducible cubic consisting of a conic and a line intersecting in two points.\\ 
$III$ & Irreducible cubic with a cusp that coincides with a blown up point or reducible cubic consisting of a conic and a tangent line.\\
\bottomrule
\end{tabular}
\caption{Strata in the moduli space of del Pezzo pairs $(Y,Z)$ of degree two according to the Kodaira type of $Z$.}
\label{Pezzostrata}
\end{table}

These four types of $Z\subset Y$ each define a stratum in the moduli space of del Pezzo pairs. The stratum of type $I_1$ where $Z_B$ is an irreducible nodal cubic and $B\subset Z_B^{\ns}$ is generic and defines an open subset $\mathcal{DPP}^\circ_2 \subset \mathcal{DPP}_2$. For now we assume that $Z$ is of type $I_1$ and we identify $Z$ with $Z_B$ so that we can make use of the group law on the singular cubic $Z_B$. By composing the marking $\phi:\mathbb{Z}_{1,7} \rightarrow \Pic(Y)$ with the restriction homomorphism $\Pic(Y)\rightarrow \Pic(Z)$ we obtain a map that assigns $e_i\mapsto [P_i]$ for $1\leq i \leq r$ and $e_0\mapsto 3[O]$ where $[O]$ is an inflection point of $Z$. Restricting this map to the root lattice $Q<\mathbb{Z}_{1,7}$ induces a homomorphism $\chi \in \Hom(Q,\Pic^0(Z))$ characterized by the relations
\begin{equation}\label{pointsroots}
\begin{aligned}
\chi(e_i-e_{i+1}) &= [P_i]-[P_{i+1}] \\
 \chi(e_0-e_1-e_2-e_3) &= 3[O]-[P_1]-[P_2]-[P_3].
\end{aligned}
\end{equation}  
\begin{prop}No root lies in the kernel of the homomorphism $\chi:Q\rightarrow \Pic^0(Z)$.
\end{prop}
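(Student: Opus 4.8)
The plan is to compute $\chi$ explicitly on each of the positive roots listed in \eqref{posroots} and to observe that $\chi(\alpha)=0$ for such a root would force one of the degeneracies that the general position hypothesis of Theorem \ref{DP} forbids. Since $\chi$ is a group homomorphism, $\chi(-\alpha)=-\chi(\alpha)$, so it is enough to treat positive roots.

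The first step is to rewrite $\chi$ in terms of the group law on $Y^{\ns}$. Composing the marking with restriction to $Y$ sends $e_i\mapsto [P_i]$ and $e_0\mapsto 3[\mathcal{O}]$, where $\mathcal{O}$ is the chosen inflection point. For $\alpha=a_0e_0+\sum_{i=1}^{7}a_ie_i\in Q=k^\perp$ one has $3a_0+\sum_{i\geq 1}a_i=-\alpha\cdot k=0$, so in $\Pic^0(Y)$
\[ \chi(\alpha)=3a_0[\mathcal{O}]+\sum_{i\geq 1}a_i[P_i]=\sum_{i\geq 1}a_i\bigl([P_i]-[\mathcal{O}]\bigr), \]
which under the group isomorphism $Y^{\ns}(\mathbb{C})\xrightarrow{\sim}\Pic^0(Y)$, $P\mapsto[P]-[\mathcal{O}]$, corresponds to the group-law combination $\sum_{i\geq 1}a_iP_i$. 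Thus $\chi(\alpha)=0$ precisely when $\sum_{i\geq 1}a_iP_i=0$ in $Y^{\ns}$.

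Now I run through the three families of positive roots. For $\alpha=e_i-e_j$ we get $\chi(\alpha)=P_i-P_j$, which vanishes iff $P_i=P_j$; this is excluded because the points of $B$ are distinct. For $\alpha=e_0-e_i-e_j-e_k$ we get $\chi(\alpha)=-(P_i+P_j+P_k)$, which vanishes iff $P_i+P_j+P_k=0$, that is, by Proposition \ref{curves}, iff $P_i,P_j,P_k$ are collinear --- excluded since no three points of $B$ lie on a line. For the remaining positive roots $\alpha=2e_0-e_1-\ldots-\hat{e}_i-\ldots-e_7$ we get $\chi(\alpha)=-\sum_{l\neq i}P_l$, a sum of the six points $\{P_l:l\neq i\}$, which vanishes iff (again Proposition \ref{curves}, with $d=2$) these six points lie on a conic --- excluded since no six points of $B$ lie on a conic. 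In every case $\chi(\alpha)=0$ contradicts Theorem \ref{DP}, so no root lies in $\ker\chi$.

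I do not expect a real obstacle here: the argument is a finite case check, and the only point requiring care is the bookkeeping in the second step, where the orthogonality $\alpha\cdot k=0$ is exactly what allows one to discard the $[\mathcal{O}]$-terms and pass to the group law. It is worth noting an alternative: since $W$ acts transitively on $R$, if some root lay in $\ker\chi$ then by Theorem \ref{markchange} the simple root $e_1-e_2$ would lie in the kernel of the analogous homomorphism attached to another marking of the same pair $(X,Y)$, whose blown-up points are again distinct --- the same contradiction, reached faster. I would still prefer the direct computation, since it makes transparent which general-position condition each root detects.
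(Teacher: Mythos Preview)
Your argument is correct and follows essentially the same route as the paper: both compute $\chi$ on the three families of positive roots and use Proposition~\ref{curves} to translate $\chi(\alpha)=0$ into the forbidden configurations (coincident points, three collinear, six on a conic). Your write-up is simply a more detailed version of the paper's proof, with the bookkeeping about $\alpha\cdot k=0$ made explicit.
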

\begin{proof}
From the construction of $\chi$ and Proposition \ref{curves} we see that
\begin{align*}
\chi(e_i-e_j)=0 &\Leftrightarrow P_i=P_j \\
\chi(e_0-e_i-e_j-e_k)=0 &\Leftrightarrow P_i,P_j,P_k \ \text{are colinear} \\
\chi(2e_0-e_1-\ldots -\hat{e}_i-\ldots -e_7)=0 &\Leftrightarrow P_1,\ldots,\hat{P}_i,\ldots,P_7 \ \text{are conconic}
\end{align*}
so that the condition that the points are in general position is equivalent to $\chi(\alpha)\neq 0$ for all roots $\alpha \in R$. 
\end{proof}
After fixing an isomorphism $\Pic^0(Z)\cong \mathbb{C}^\ast$ we can identify the space 
\[ \Hom(Q,\Pic^0(Z)) \]
with the complex torus $\mathbb{T}=\Hom \left(Q,\mathbb{C}^\ast \right)$. This identification is not canonical but is unique up to multiplication by an element of $\Aut(\mathbb{C}^\ast) \cong \{\pm 1 \}$ which acts on $\mathbb{T}$. The Weyl group $W$ of type $E_7$ acts on $\mathbb{T}$ by its natural action on $Q$ and we denote the complement of the toric mirrors for this action by $\mathbb{T}^\circ$.  

\begin{thm}[Looijenga]\label{BigTheoremC}
Let $(Y,Z,\phi)$ be a marked del Pezzo pair of degree two with $Z$ a nodal anti-canonical curve. The association 
\[ (Y,Z,\phi)\mapsto \left( \chi:Q\rightarrow \Pic^0(Z) \right) \]
extends to an isomorphism of orbifolds
\begin{equation}\label{markediso} 
\widetilde{\mathcal{DPP}}_2^\circ  \rightarrow \{ \pm 1 \} \backslash \mathbb{T}^\circ. 
\end{equation}
The left hand side is the open stratum of the moduli space of marked del Pezzo pairs of degree two with Z of type $I_1$. Similarly we have an isomorphism of orbifolds
\[
\mathcal{DPP}_2^\circ  \rightarrow W \backslash \mathbb{T}^\circ 
\]
\end{thm}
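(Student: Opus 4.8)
The plan is to construct the map $(X,Y,\phi)\mapsto \chi$ on the marked level and show it is an isomorphism of orbifolds $\widetilde{\mathcal{DPP}}_2^\circ \to \{\pm1\}\backslash\mathbb{T}^\circ$, and then pass to the quotient by the Weyl group. First I would verify that the association is well-defined: starting from a marked del Pezzo pair $(X,Y,\phi)$ with $Y$ of type $I_1$, identify $Y$ with the plane cubic $Y_B$ through the blown-up points $B$, choose an inflection point $\mathcal{O}$ to pin down the group law, and compose $\phi$ with the restriction $\Pic(X)\to\Pic(Y)$ and then restrict to $Q=k^\perp$. The relations \eqref{pointsroots} show this lands in $\Hom(Q,\Pic^0(Y))$, and the proposition just proved shows $\chi(\alpha)\neq 0$ for every root, so $\chi\in\mathbb{T}^\circ$ once we fix an isomorphism $\Pic^0(Y)\cong\mathbb{C}^\ast$. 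The residual ambiguity is exactly the choice of $\mathcal{O}$ and of the isomorphism $\Pic^0(Y)\cong\mathbb{C}^\ast$; both amount to $\Aut(\mathbb{C}^\ast)\cong\{\pm1\}$ (a different inflection point differs from $\mathcal{O}$ by a $3$-torsion point, which is killed on $Q$ since $Q$ has finite index $2$ in... — more precisely one checks the two choices of inflection point give homomorphisms differing by an element of $\Hom(Q,\mu_3)$ that is absorbed, and the real content is the sign), so the well-defined target is $\{\pm1\}\backslash\mathbb{T}^\circ$.

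Next I would establish that the induced map $\widetilde{\mathcal{DPP}}_2^\circ \to \{\pm1\}\backslash\mathbb{T}^\circ$ is a bijection on points, and in fact an isomorphism of varieties (hence of orbifolds since both carry trivial generic stabilizer on the open stratum). For surjectivity and injectivity the strategy is the classical one: given $\chi\in\mathbb{T}^\circ$, fix a nodal cubic $Y_B\subset\mathbb{P}^2$ with a chosen inflection point and isomorphism $Y_B^{\ns}\cong\mathbb{C}^\ast$, and use $\chi$ to read off the points $P_i\in Y_B^{\ns}$ via $e_i\mapsto\chi(e_i)\in\mathbb{C}^\ast$ (after normalizing using the $e_0\mapsto 3[\mathcal{O}]$ condition to fix the overall ambiguity coming from $\Lambda_{1,7}\supsetneq Q$). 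The condition $\chi\in\mathbb{T}^\circ$ guarantees by the displayed equivalences in the preceding proof that no three $P_i$ are collinear and no six lie on a conic; the eighth general-position condition (no singular cubic through a subset with a node at a point of $B$) must be checked to follow as well, presumably because such a configuration also corresponds to vanishing of $\chi$ on a root or is excluded by the nodal cubic already passing through the points. Blowing up $B$ gives a del Pezzo surface of degree $2$, the strict transform of $Y_B$ is an anticanonical $I_1$ curve $Y$, and the canonical basis gives a marking $\phi$; one checks the associated $\chi$ is the one we started from, giving surjectivity, and that the only freedom in the reconstruction is $\PGL(3,\mathbb{C})$ on $B$ together with the $\{\pm1\}$ ambiguity, giving injectivity. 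That both sides are smooth of the same dimension (namely $7$, since $\mathbb{T}$ has dimension $7$ and $\widetilde{\mathcal{DPP}}_2^\circ$ is open in a $7$-dimensional space) upgrades the bijection to an isomorphism.

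Finally, for the unmarked statement I would invoke Theorem \ref{markchange}: two markings of the same del Pezzo pair differ by a unique $w\in W_7$, and by the construction of $\chi$ the marking change $\phi\mapsto\phi\circ w$ transforms $\chi\mapsto w^{-1}\cdot\chi$ (the $W$-action on $\mathbb{T}$ being induced from its action on $Q$). Therefore the marked isomorphism \eqref{markediso} is $W$-equivariant, where $W$ acts on the source by changing markings and on the target $\{\pm1\}\backslash\mathbb{T}^\circ$ through its action on $\mathbb{T}^\circ$ (which commutes with the $\{\pm1\}$-action since $-1$ is central, in fact $-1\in W_7$ so the two quotients can be taken together). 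Passing to $W$-orbits on both sides and using that $\mathcal{DPP}_2^\circ = W_7\backslash\widetilde{\mathcal{DPP}}_2^\circ$ yields the isomorphism $\mathcal{DPP}_2^\circ \to W\backslash\mathbb{T}^\circ$ as an isomorphism of orbifolds (here the orbifold structure on the target records the stabilizers of the $W$-action on $\mathbb{T}^\circ$, which on the mirror complement are trivial, so really these are honest varieties on the open locus). The main obstacle I anticipate is the careful bookkeeping of the two distinct sources of $\{\pm1\}$-ambiguity — the choice of inflection point/group-law origin versus the automorphism of $\mathbb{C}^\ast$ — and checking that together they give exactly $\{\pm1\}$ and not a larger group, as well as verifying that the open general-position condition on $B$ matches precisely the mirror-complement condition $\chi\in\mathbb{T}^\circ$; this is where one must be most careful that the stratum $\mathcal{DPP}_2^\circ$ of type $I_1$ corresponds exactly to $\mathbb{T}^\circ$ and not to a smaller or larger open set.
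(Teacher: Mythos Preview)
Your approach is essentially the paper's: both construct the inverse by fixing a plane nodal cubic $Y$ with a chosen inflection point, extending $\chi$ to $Q\otimes\mathbb{Q}$, and reading off the seven points via $P_i=\chi(e_i-e_0/3)$; blowing these up recovers the marked del Pezzo pair. Two clarifications on the obstacles you flag. First, the $\{\pm1\}$-ambiguity comes \emph{solely} from $\Aut(\mathbb{C}^\ast)$ in the identification $\Pic^0(Y)\cong\mathbb{C}^\ast$; the choice of inflection point does not enter into $\chi$ at all (it only appears in the inverse construction, where the cube-root ambiguity in $e_0/3$ shifts all $P_i$ by a common third root of unity, i.e.\ by an inflection point of $Y$, which is immaterial after dividing by $\PGL(3,\mathbb{C})$). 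Second, for $r=7$ the ``no eight on a singular cubic'' clause of Theorem~\ref{DP} is vacuous, so general position is exactly ``no three collinear, no six conconic,'' which by the displayed equivalences in the proposition preceding the theorem is precisely $\chi\in\mathbb{T}^\circ$; there is nothing further to check.
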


\begin{proof}
Let $\chi$ be an element of $\mathbb{T}^\circ=\Hom(Q,\mathbb{C}^\ast)^\circ$. We construct an inverse to the map of Equation \ref{markediso} by constructing seven points on a fixed nodal cubic $Z$. Fix an isomorphism $\mathbb{C}^\ast \rightarrow Z^{\ns}$ by choosing one of the three inflection points $O$ on $Z$ as a unit element. The group law then satisfies $t_it_jt_k=1$ if and only if the corresponding points $P_i,P_j,P_k$ on $Z^{\ns}$ are colinear. Since the seven points should satisfy (\ref{pointsroots}) they must also satisfy the equality
\begin{equation}\label{pointform}
P_i = \chi(e_i-e_0/3)
\end{equation}\label{Piequality}
where we consider $\chi$ as an element of $\Hom(Q\otimes_\mathbb{Z} \mathbb{C},\mathbb{C}^\ast)$. This determines the seven points uniquely up to addition of an inflection point of $Z^{\ns}$ (or equivalently multiplication by a third root of unity of $\mathbb{C}^\ast$). Blowing up these seven points determines a marked del Pezzo surface $Y$ and the pullback of $Z$ under the blowup map defines a nodal anti-canonical curve on $Y$ isomorphic to $Z$.  
 \end{proof}
 
To conclude this section we obtain explicit descriptions of the standard triangular Cremona transformation centered in three points on an irreducible plane nodal cubic $Z_B$ in terms of the coordinate $t\in \mathbb{C}^\ast \cong Z^{\ns}(\mathbb{C})$.  
The Cremona map $\rho(s_7)$ centered in the points $P_1,P_2,P_3$ of $Z$ with coordinate $t$ maps $Z$ to another nodal cubic $Z^\prime$ which can be mapped back to $Z$ with new coordinate $t^\prime$ by an element  of $\PGL(3,\mathbb{C})$. If $t_i,t_j,t_k,t_1,t_2,t_3\in Z$ are distinct points lying on a conic, then $t_i^\prime,t_j^\prime,t_k^\prime$ lie on a line by the properties of the standard triangular Cremona transformation so that
\begin{equation*}
1=t_it_jt_kt_1t_2t_3 =t_i^\prime t_j^\prime t_k^\prime.
\end{equation*} 
Similarly, the standard triangular Cremona transformation maps the line $L_{12}$ to $t_3^\prime$, so that for a point $t_i$ on $L_{12}$:
\begin{equation*}
1 = t_it_1t_2 = t_3^\prime  t_i^{\prime -1}  .
\end{equation*}
From these formulas we compute
\begin{align}\label{cremformula}
t^\prime = \left\{ \begin{array}{ll}
t(t_1t_2t_3)^{-2/3} & t=t_1,t_2,t_3\\
t(t_1t_2t_3)^{1/3} & t \ \text{general}
\end{array} \right.
\end{align} which determines $t^\prime$ up to multiplication by a third root of unity. These formulas can also be derived by computing the action of $s_7\in W$ on Equation \ref{pointform}. 

\section{Strata of smooth pointed quartic curves}

We have seen in Section \ref{dpdegree2} that the moduli space $\mathcal{DP}_2$ of del Pezzo surfaces of degree two and the moduli space $\mathcal{Q}$ of plane quartic curves are isomorphic. In this section we relate the moduli space $\mathcal{DPP}_2$ of del Pezzo pairs of degree two and its strata to the moduli space of smooth pointed plane quartics $\mathcal{Q}_1$. We first define this latter space $\mathcal{Q}_1$. 

\begin{dfn}Let $k$ be the field of real or complex numbers. A pointed plane quartic curve is a pair $(C,p)$ with $C$ a plane quartic curve and $p\in C(k)$. The space $\Gamma$ of smooth pointed quartics curve is defined by
\[ \Gamma(k) = \left\{ (C,p) \ ; \ p\in C(k) \right\} \subset P_{4,3}(k) - \Delta(k) \times \mathbb{P}^2. \]
The group $\PGL(3,k)$ acts on $\Gamma$ and the quotient 
\[ \mathcal{Q}_1= \PGL(3,k) \big\backslash \Gamma(k) \]
represents isomorphism classes of smooth pointed plane quartics.  
\end{dfn}

To a pointed quartic $(C,p)$ we can associate a del Pezzo pair $(Y,Z)$ in the following way. The del Pezzo surface $Y$ of degree two is defined by
\begin{equation}\label{antican}
Y = \left\{ w^2 = f(x,y,z) \right\} \subset \mathbb{P}(2,1,1,1)
\end{equation}
in weighted projective space. The morphism defined by the anti-canonical map is realized by the projection map $\psi:Y\rightarrow \mathbb{P}^2$ given by: 
\[ [w:x:y:z]\mapsto [x:y:z].\] 
Every anti-canonical curve on $Y$ is the pullback under $\psi$ of a line in $\mathbb{P}^2$. We define $Z=\psi^{-1} T_pC$ to be the pullback of the tangent line to $C$ at $p$. It is a singular anti-canonical curve on $Y$ of arithmetic genus $1$. Its Kodaira type is determined by the type of the intersection divisor $D=(C\cdot T_pC)$ defined below.  

\begin{dfn}
Let $D=\sum_{i=1}^k d_i(p_i)$ be a divisor on a curve $C$ with the $p_i$ distinct and ordered in such a way that $d_1\geq \ldots \geq d_r$. The type of $D$ is the $r$-tuple $\underline{d}=(d_1,\ldots,d_r)$.
\end{dfn} 

There are four possibilities for the type of $D$ corresponding to the types for $Z$ in Table \ref{Pezzostrata}. Similarly we obtain four strata in the space $\Gamma$. 
\begin{table}[h]
\begin{displaymath}
\begin{array}{l l l l}
\toprule
\text{Stratum} & D & Z   & \text{Codim.} \\
\midrule
\Gamma^{\circ} & (2,1,1) & I_1 & 0 \\
\Gamma^{\text{bit}} & (2,2) & I_2 & 1\\
\Gamma^{\text{flex}} & (3,1) & II & 1\\
\Gamma^{\text{hflex}} & (4) & III & 2\\
\bottomrule
\end{array}
\end{displaymath}
\caption{Strata in the space of pointed quartics}
\end{table}
The strata $\Gamma^{\text{bit}}$ and $\Gamma^{\text{flex}}$ where the point $p$ is respectively a bitangent and an inflection point have codimension one and the stratum $\Gamma^{\text{hflex}}$ where $p$ is a hyperflex has codimension two in the space $\Gamma$. 

\section{Moduli of real del Pezzo pairs of degree two}

Let $(C,p)$ be a smooth real pointed plane quartic curve. By the results of the previous section and Section \ref{realdpsurfaces} we can associate to $(C,p)$ a real del Pezzo pair $(Y,Z)$ with real form $\chi_Y^-$ such that $Y^-(\mathbb{R})$ in nonorientable. The real form restricts to $Z$ which is a real curve of arithmetic genus $1$ on $Y$ and $Z^{\ns}(\mathbb{R})\neq \emptyset$. If $(C,p)$ is in the open stratum $\Gamma^\circ$ then the tangent line $T_pC$ intersects $C$ in two other distinct points which can both be real or a pair of complex conjugate points. In both cases the curve $Z$ is of type $I_1$ (it has a single node). Since $Z^{\ns}(\mathbb{R})\neq \emptyset$ there are two possibilities for the real form induced by $\chi_Y^-$ on $Z^{\ns}(\mathbb{C}) \cong \mathbb{C}^\ast$. Either it maps: $t\mapsto \bar{t}$ and $Z^{\ns}(\mathbb{R})\cong \mathbb{R}^\ast$ or $t\mapsto \bar{t}^{-1}$ and $Z^{\ns}(\mathbb{R})\cong S^1$. An example of both is given in Figure \ref{planemodels}.

\begin{figure}[h]
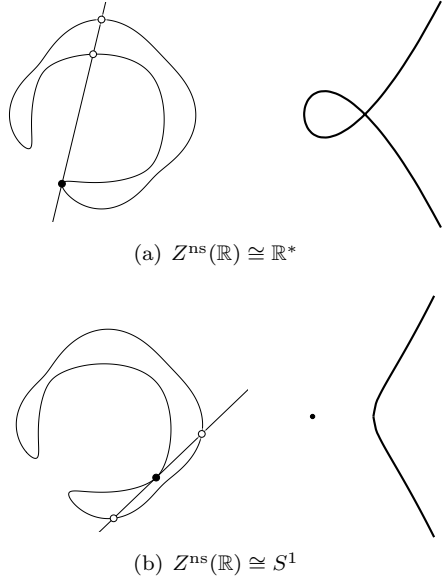

\centering
\subfigure[$Z^{\ns}(\mathbb{R})\cong \mathbb{R}^\ast$]
{
\includegraphics[trim = 250 0 250 0,clip,width=3.3cm]{Q1MUL0}
\qquad
\begin{tikzpicture}[scale=0.8]
\draw[scale=1,domain=-1.5:1.5,smooth,variable=\t, thick]
plot ({(\t)^2-1},{(\t)^3-(\t)});
\end{tikzpicture}
}

\subfigure[$Z^{\ns}(\mathbb{R})\cong S^1$]
{
\includegraphics[trim = 250 0 250 0,clip,width=3.3cm]{Q1ADD}
\qquad
\begin{tikzpicture}[scale=0.8]
\draw[scale=1,domain=1:2,smooth,variable=\t, thick]
plot ({\t},{sqrt((\t)^3-(\t)^2)});
\draw[scale=1,domain=1:2,smooth,variable=\t , thick]
plot ({\t},{-sqrt((\t)^3-(\t)^2)});
\draw[fill=black] (0,0) circle (1pt);
\end{tikzpicture}
}
\caption{The two possibilities for $Y^{\ns}(\mathbb{R})$ for a quartic curve with one oval.}
\label{planemodels}
\end{figure}

\begin{thm}\label{bigrealthm}The map $(C,p)\mapsto \left( \chi:\Pic^0 Y\rightarrow \Pic^0 Z\right)$ extends to an isomorphism:
\[ (\mathcal{Q}_1^{\circ})^\mathbb{R} \cong (W \backslash \mathbb{T}^\circ)(\mathbb{R}) \] 
where $\mathbb{T}^\circ$ denotes the complement in $\mathbb{T}=\Hom(Q,\mathbb{C}^\ast)$ of the mirrors of the action of the Weyl group $W$ of type $E_7$.    
\end{thm}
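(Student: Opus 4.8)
The plan is to deduce the statement from Looijenga's complex isomorphism (Theorem~\ref{BigTheoremC}), while keeping careful track of real structures. Section~\ref{RealDelPezzo} attaches to a pair $(C,p)\in\mathcal{Q}_{\mathbb{R},1}^{\circ}$ the real, non-orientable del Pezzo pair $(X_-,Y_-)$, where $X_-$ is the double cover of $\mathbb{P}^2$ branched along $C$ with the real form $c_-$, and $Y_-=\psi^{-1}(T_pC)$ is an anticanonical curve of Kodaira type $I_1$ with $Y^{\ns}_-(\mathbb{R})\neq\emptyset$. Conversely, starting from a real del Pezzo pair $(X,Y)$ of type $I_1$ with $X(\mathbb{R})$ non-orientable, the branch quartic $C$ is real, the node of $Y$---the unique singular point of $Y$, hence fixed by the real form---lies over a real point $p$ of $C$, and since $Y$ has type $I_1$ the line $\psi(Y)$ meets $C$ with intersection type $(2,1,1)$ at $p$; thus $(C,p)\in\mathcal{Q}_{\mathbb{R},1}^{\circ}$. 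These two assignments are mutually inverse, so it suffices to match the moduli of such real del Pezzo pairs with $(W\backslash\mathbb{T}^\circ)(\mathbb{R})$.

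For well-definedness I would fix a marking $\phi$ of $(X_-,Y_-)$ and take the homomorphism $\chi\colon Q\to\Pic^0 Y_-$ of Looijenga's construction; after identifying $\Pic^0 Y_-\cong\mathbb{C}^\ast$ this becomes a point of $\mathbb{T}^\circ$ (no root lies in its kernel), well defined up to $\{\pm1\}$ and, after forgetting $\phi$, up to $W$. The real form $c_-$ induces on $\Pic^0 X_-$ an involution corresponding, via $\phi$, to an involution $u\in W$, and on $\Pic^0 Y_-\cong\mathbb{C}^\ast$ one of the two real structures $z\mapsto\bar z$ (if $Y^{\ns}_-(\mathbb{R})\cong\mathbb{R}^\ast$) or $z\mapsto\bar z^{-1}$ (if $Y^{\ns}_-(\mathbb{R})\cong S^1$). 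Since $\chi$ is the restriction of a morphism defined over $\mathbb{R}$, it intertwines these structures: in the first case $u\cdot\chi=\bar\chi$, so $\chi\in\mathbb{T}_u(\mathbb{R})$; in the second, using that $-1\in W$ acts by inversion and commutes with complex conjugation, $(-u)\cdot\chi=\bar\chi$, so $\chi\in\mathbb{T}_{-u}(\mathbb{R})$. Either way $q(\chi)\in(W\backslash\mathbb{T}^\circ)(\mathbb{R})$, and since every step is algebraic, $(C,p)\mapsto q(\chi)$ defines a morphism $\Phi\colon\mathcal{Q}_{\mathbb{R},1}^{\circ}\to(W\backslash\mathbb{T}^\circ)(\mathbb{R})$.

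To show $\Phi$ is bijective I would invert Looijenga's construction and use the classification of real del Pezzo surfaces of degree $2$. For surjectivity, a point of $(W\backslash\mathbb{T}^\circ)(\mathbb{R})$ has the form $q(\chi)$ with $\chi\in\mathbb{T}_v^\circ(\mathbb{R})$ for some involution $v\in W$; by Table~\ref{invtable} exactly one of $v$ and $-v$, say $w$, occurs as the $\Pic^0 X$-involution of a non-orientable real form. Choosing a real nodal cubic $Y_0$ with a real inflection point, whose node is a real crossing if $v=w$ (so $Y_0^{\ns}(\mathbb{R})\cong\mathbb{R}^\ast$) and solitary if $v=-w$ (so $Y_0^{\ns}(\mathbb{R})\cong S^1$), and setting $P_i=\chi(e_i-e_0/3)\in Y_0^{\ns}(\mathbb{C})$, the relation $v\cdot\chi=\bar\chi$ together with the Cremona formulas~\eqref{cremformula} shows that $\{P_1,\dots,P_7\}$ is stable under the birational involution $\rho(w)$ followed by complex conjugation; blowing up these points and resolving gives a del Pezzo pair with a real structure, which we may take to be non-orientable after composing with the Geiser involution if necessary (this alters $\phi$ only by $-1\in W$, hence leaves $q(\chi)$ unchanged). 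Its branch quartic $C$ is real, the image $p$ of the node is real, and type $I_1$ forces $(C,p)\in\mathcal{Q}_{\mathbb{R},1}^{\circ}$ with $\Phi(C,p)=q(\chi)$. For injectivity, if $\Phi(C,p)=\Phi(C',p')$ then Theorem~\ref{BigTheoremC} provides a complex isomorphism $(X_-,Y_-)\cong(X'_-,Y'_-)$; since a degree-$2$ del Pezzo surface carries exactly the two real forms $c_+$ and $c_-$, distinguished by orientability of the real locus (Section~\ref{RealDelPezzo}), this isomorphism---adjusted by the Geiser involution if needed---intertwines $c_-$ and $c'_-$, hence descends to a real isomorphism $(C,p)\cong(C',p')$. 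Finally $\Phi$ is an isomorphism because its inverse, obtained from Looijenga's construction, is again algebraic.

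The main obstacle is the bookkeeping of real structures. One must check that the pair $(u,-u)\subset W$ coming from the two extensions $c_\pm$ of the real structure on $C$ combines correctly with the two real forms $\mathbb{R}^\ast$ and $S^1$ of $Y^{\ns}$ and with the choice of real model $Y_0$, so that as $(C,p)$ ranges over $\mathcal{Q}_{\mathbb{R},1}^{\circ}$ the invariant $q(\chi)$ covers all of $(W\backslash\mathbb{T}^\circ)(\mathbb{R})$---that is, all ten loci $q(\mathbb{T}_v^\circ(\mathbb{R}))$ with $v$ running through the involution classes in~\eqref{pairs}---each exactly once. A secondary subtlety is the locus of pointed quartics with extra automorphisms, where the naive count of two real forms must be replaced by the explicit classification; on the open stratum this causes no difficulty.
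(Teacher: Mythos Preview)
Your proposal is correct and follows essentially the same route as the paper: reduce to Looijenga's complex isomorphism (Theorem~\ref{BigTheoremC}), track the real form $c_-$ via the induced involution $u\in W$ on $\Pic^0 X_-$ and the $\pm 1$-ambiguity on $\Pic^0 Y_-$ to land in some $\mathbb{T}_{\pm u}(\mathbb{R})$, and for the converse place seven points on a real nodal cubic. The only point where the paper is more explicit is that the assertion ``a point of $(W\backslash\mathbb{T}^\circ)(\mathbb{R})$ has the form $q(\chi)$ with $\chi\in\mathbb{T}_v^\circ(\mathbb{R})$ for some \emph{involution} $v$'' is not automatic---a priori one only has $w\cdot\chi=\bar\chi$ for some $w\in W$---and the paper invokes Proposition~\ref{toricTits} (Tits) for this reduction; you should cite it as well.
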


\begin{proof}
A lot of work has already been done in the proof of Theorem \ref{BigTheoremC}. First we need to show that the element $\chi:Q\rightarrow \mathbb{C}^\ast$ we associate to $(C,p)$ is a real point of $W\backslash \mathbb{T}^\circ$. By definition this means that $w\cdot \chi = \bar{\chi}$ for some element $w\in W$. The real structure $\chi_Y^-$ of Equation \ref{realstructuredelpezzo} acts on $\Pic^0(Y)\cong Q$ as an involution $u\in W$. Since $Y^-(\mathbb{R})$ is nonorientable we see from Table \ref{realdelpezzotable} that this involution is of type $1,A_1,A_1^2,A_1^3$ or $D_4$. The action of the restriction of $\chi_Y^-$ to $\Pic^0(Y)\cong \mathbb{C}^\ast$ is one of $t\mapsto \bar{t}^{\pm 1}$ so the element $\chi$ satisfies $u\cdot \chi = \bar{\chi}^{\pm 1}$. Since the Weyl group  $W$ of type $E_7$ contains $-1$ we can rewrite this as $\pm u \cdot \chi = \bar{\chi}$ so that $\chi$ is indeed a real element of $W\backslash \mathbb{T}^\circ$. 

Conversely, let $\chi$ be a real point of $(W\backslash \mathbb{T}^\circ)(\mathbb{R})$. By Proposition \ref{toricTits} we can assume that $u\cdot \chi = \bar{\chi}^{\pm 1}$ with $u\in W$ an involution of type $1,A_1,A_1^2,A_1^3$ or $D_4$. As in the proof of Theorem \ref{BigTheoremC} we fix a real nodal cubic $Z$ in $\mathbb{P}^2$ and an isomorphism $Z^{\ns}(\mathbb{C})\cong \mathbb{C}^\ast$ by choosing a real inflection point. The real form of $Z$ is then equivalent to one of $t\mapsto \bar{t}^{\pm 1}$. As in Equation \ref{Piequality} the element $\chi$ determines seven points in $\mathbb{C}^\ast$ by the formula $t_i=\chi(e_i-\frac{e_0}{3})$ which we interpret as points on $Z^{\ns}(\mathbb{C})$. Since $\chi$ is real these points satisfy
\[ u \cdot (t_1,\dots,t_7) = (\bar{t}_1^{\pm 1},\ldots, \bar{t}_7^{\pm 1}).\] 
where the involution $u\in W$ acts by the Cremona action of the Weyl group as a birational involution of $\mathbb{P}^2$. This involution lifts to an anti-holomorphic involution of the del Pezzo surface $Y$ obtained by blowing up the seven points. These two construction are inverse to each other.

\end{proof}

\section{Reflection groups and real tori}\label{reflection}

In this section we study the connected components of the space $(W\backslash \mathbb{T}^\circ)(\mathbb{R})$ where $\mathbb{T}$ is the complex torus $\mathbb{T}=\Hom(Q,\mathbb{C}^\ast)$ for $Q$ a root lattice of type $ADE$. For type $E_7$ this space has $20$ connected components which we describe explicitly as quotients of real subtori of $\mathbb{T}$.

\subsection{Reflection groups and root systems}\label{WeylRoots}

We start by recalling some facts about reflection groups and root systems. Our main reference is \cite{Bourbaki} Chapter VI. Let $V$ be a real, finite dimensional vector space of dimension $n$ with an inner product $(\cdot,\cdot)$. For every nonzero $\alpha \in V$ we define the reflection $s_\alpha \in O(V)$ by
\begin{displaymath}
s_\alpha(x)=x-2\frac{(\alpha,x)}{(\alpha,\alpha)}\alpha \ 
\end{displaymath}
for all $x\in V$. The mirror $H_\alpha$ is defined as the fixed point locus of the reflection $s_\alpha$. A \emph{root system} $R \subset V$ is a finite set of nonzero vectors called \emph{roots} that satisfy the following properties
\begin{description}
\item[R1] The $\mathbb{R}$-span of $R$ is $V$. 
\item[R2] If $\alpha \in R$ then $R \cap \mathbb{R}\alpha = \{\alpha,-\alpha\}$.
\item[R3] $s_\alpha R = R$ for all $\alpha \in R$. 
\item[R4] $2\frac{(\beta,\alpha)}{(\alpha,\alpha)} \in \mathbb{Z}$ for all $\alpha,\beta \in R$.
\end{description}

A system of simple roots $\Delta = \{ \alpha_1,\ldots,\alpha_r\} \subseteq R$ is a basis for $V$ such that every root is an integral linear combination $\sum_{i=1}^r c_i\alpha_i$ of simple roots of the same sign. It is known that such a simple system always exists. From now on we assume we have fixed a system of simple roots $\Delta \subset R$. For every root $\alpha \in R$ we define the coroot $\alpha^\vee$ by
\begin{displaymath}
\alpha^\vee = \frac{2\alpha}{(\alpha,\alpha)}.
\end{displaymath}
The set of coroots $R^\vee$ is again a root system (the coroot system) with corresponding coroot lattice $Q^\vee=\mathbb{Z}R^\vee$. A root system $R$ is called \emph{irreducible} if it is non-empty and cannot be decomposed as an orthogonal direct sum $R=R_1\oplus R_2$ of two non-empty root systems $R_1$ and $R_2$. Let $R$ be an irreducible root system. We define the \emph{highest root} $\tilde{\alpha}$ of $R$ with respect to $\Delta$ as the unique root such that $\sum_{i=1}^r c_i$ is maximal. We also define $\alpha_0=-\tilde{\alpha}$. The Weyl group $W$ is the group generated by the reflections $s_\alpha$ with $\alpha \in R$ or equivalently by the simple reflection $s_\alpha$ with $\alpha \in \Delta$. It is a finite group and acts simply transitively on the connected components of $V\setminus \cup H_{\alpha}$ which are called \emph{chambers}. The fundamental chamber $\mathcal{C}$ with respect to a given system of simple roots $\Delta$ is defined by
\begin{displaymath}
\mathcal{C} = \left\{ x \in V \ \vert \ (\alpha_i,x)>0 \ \text{for} \  1 \leq i \leq r \right\}.
\end{displaymath}
Its closure $\bar{\mathcal{C}}$ is a fundamental domain for the action of $W$ on $V$. 

The \emph{affine Weyl group} $W_a$ is the group generated by the affine reflections $s_{\alpha,k}$ with $\alpha \in R$ and $k\in \mathbb{Z}$ defined by 
\begin{displaymath}
s_{\alpha,k}(x)=x - (\alpha,x)\alpha^\vee + k\alpha^\vee .
\end{displaymath}
The mirror of $s_{\alpha,k}$ is the affine hyperplane $H_{\alpha,k}=\{ x \in V ; (\alpha, x)=k \}$. The affine Weyl group $W_a$ is the semidirect product of $W$ by the coroot lattice: $W_a = Q^\vee \rtimes W$. This allows us to write $s_{\alpha,k}=t(k\alpha^\vee)s_\alpha$ where $t(k\alpha^\vee)$ denotes translation over $k \alpha^\vee$ in $V$. The group $W_a$ acts simply transitively on the connected components of the space $V^\circ = V \setminus \cup H_{\alpha,k}$ which are called \emph{alcoves}. The fundamental alcove $\mathcal{A}$ is the simplex given by
\begin{displaymath}
\mathcal{A} = \left\{ x \in V \ \vert \ (\tilde{\alpha},x) < 1, \ (\alpha_i,x)>0 \ \text{for} \  1 \leq i \leq r \right\}
\end{displaymath}
and its closure $\bar{\mathcal{A}}$ is a fundamental domain for the action of $W_a$ on $V$. The $r+1$ closed facets $\bar{\mathcal{A}}_i$ of $\bar{\mathcal{A}}$ are given by
\begin{displaymath}
\bar{\mathcal{A}}_i = 
\begin{cases}
H_{\alpha_i}\cap \bar{\mathcal{A}} & \text{if} \ 1\leq i \leq r\\
H_{\tilde{\alpha},1} \cap \bar{\mathcal{A}} & \text{if} \ i=0
\end{cases}
\end{displaymath}

A reducible root system $R$ can be decomposed into a direct sum of irreducible root systems $\{R_i\}_{i\in I}$ for some finite index set $I$. The Weyl group $W(R)$ of $R$ is the direct product of the Weyl groups $\{W(R_i)\}_{i \in I}$.  This decomposition is unique up to permutation of the factors. A fundamental domain for the action of $W(R)$ on $V$ is now the direct product of the fundamental chambers of the factors. Similarly for the affine Weyl group $W_a(R)=Q^\vee \rtimes W(R)$ a fundamental domain on $V$ is the product of the fundamental alcoves of the factors.

We want to determine the stabilizer $\Stab_{W_a}(x)$ of an $x\in V$ in the affine Weyl group. Since all points in the orbit $W_a \cdot x$ have conjugate stabilizers, we can assume that $x\in \bar{\mathcal{A}}$. The stabilizer $\Stab_{W_a}(x)$ is the group generated by the reflections in the mirrors $H_{\alpha,k}$ that contain $x$. It is a Weyl group with root system $R(x)$ and system of simple roots $\Delta(x)$ given by
\begin{equation}\label{BorelSiebenthal}
R(x) = \left\{ \alpha \in R\ ; \ (\alpha,x) \in \mathbb{Z} \right\} \ ,\ \Delta(x) = \left\{ \alpha_i \ ; 0\leq i \leq r , \ x \in \bar{\mathcal{A}}_i \right\}.
\end{equation}
These root systems can be reducible, even if the root system $R$ is irreducible.

\subsection{The extended affine Weyl group}

The coweight lattice $P^\vee$ is defined by 
\begin{displaymath}
P^\vee = \{ \mathbb{Z} \in V ; (\mathbb{Z},\alpha)\in \mathbb{Z} \quad  \forall \alpha \in R\}
\end{displaymath}
and contains $Q^\vee$ as a subgroup of finite index. It has a basis $\{ \varpi_1^\vee,\ldots,\varpi_r^\vee \}$ dual to the basis of simple roots of $R$, so that $( \alpha_i, \varpi_j^\vee) = \delta_{ij}$. The \emph{extended affine Weyl group} $W_a^\prime$ is defined as the semidirect prodct $P^\vee \rtimes W$ with $P^\vee$ acting on $V$ by translations. We will prove that $W_a^\prime$ is the extension of $W_a$ by a finite subgroup of the automorphisms of the fundamental alcove. 

Let $n_i=(\tilde{\alpha},\varpi_i^\vee)$ be the coefficient of $\alpha_i$ in the highest root $\tilde{\alpha}$. For notation it is convenient to define $\varpi^\vee_0=0\in P^\vee$ and $n_0=1$. The fundamental alcove $\mathcal{A}$ is the open $n$-simplex with vertices $\{\varpi_i^\vee / n_i\}_{i=0}^r$. Let $J$ be the set of indices $0\leq i \leq r$ such that $n_i=1$. The vertices $\varpi_i^\vee$ with $i\in J$ or equivalently: $R(\varpi_i^\vee/n_i)\cong R$ are called \emph{special}. Put $R_0=R$ and let $w_0$ be the longest element of $W$ with respect to the basis of simple roots in equation $\{\alpha_i\}_{i=1}^r$. We also define for every $i\in J\setminus \{0\}$ the root system $R_i$ generated by the simple roots
\begin{equation}\label{roots}
\{ \alpha_1,\ldots,\hat{\alpha_i},\ldots,\alpha_r\}.
\end{equation}
Let $w_i$ be the longest element of the Weyl group $W(R_i)$ with respect to the basis of simple roots (\ref{roots}).  For every $i\in J$ we now define the following element of the extended affine Weyl group
\begin{displaymath}
\gamma_i = t(\varpi_i^\vee)w_iw_0.
\end{displaymath}

Observe that $\gamma_i(0)=\varpi_i^\vee$ and $\gamma_0=w_0^2=1$. Proposition 6 from \cite{Bourbaki} \S $2$ VI states that we have equality
\begin{equation}\label{equality}
\{ w\in W^{\prime}_a ; w(\mathcal{A})=\mathcal{A} \} = \{ \gamma_i \}_{i\in J} 
\end{equation} 
and we can identify the group (\ref{equality}) with the finite Abelian group $P^\vee/Q^\vee$ by assigning to $\gamma_i$ the class of $\varpi_i$ mod $Q^\vee$ where $i\in J$. We see that the group $P^\vee/Q^\vee$ acts simply transitively on the special points. Since the affine Weyl group $W_a$ acts simply transitively on the alcoves it follows from the above that we have an isomorphism
\begin{equation}\label{extiso}
W_a \rtimes P^\vee/Q^\vee \overset{\sim}{\longrightarrow} P^\vee \rtimes W = W_a^\prime
\end{equation}
by assigning $(t(\mathbb{Z})w,\gamma_i)\mapsto t(\mathbb{Z}+w\varpi_i^\vee)ww_iw_0$. The extended affine Weyl group acts transitively on the connected components of $V^\circ$, but the action need not be free. The action of $P^\vee / Q^\vee$ on the fundamental alcove $\mathcal{A}$ can have fixed points. Also $W_a^\prime$ is in general not a Coxeter group. 

\begin{lem}\label{stab} Let $x\in \bar{\mathcal{A}}$, then
\begin{displaymath}
\Stab_{W_a^\prime}(x) = \Stab_{W_a}(x) \rtimes \Stab_{P^\vee/Q^\vee}(x)
\end{displaymath} 
\end{lem}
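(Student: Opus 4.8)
The plan is to use the isomorphism (\ref{extiso}) to transport the stabilizer question from $W_a^\prime = P^\vee \rtimes W$ to the semidirect product $W_a \rtimes P^\vee/Q^\vee$, where the subgroup $W_a$ is normal. First I would note the containment $\supseteq$ is almost immediate: both $\Stab_{W_a}(x)$ and a lift of $\Stab_{P^\vee/Q^\vee}(x)$ fix $x$, so we need to know that the product of these two subgroups is actually a semidirect product, i.e. that $\Stab_{P^\vee/Q^\vee}(x)$ genuinely normalizes $\Stab_{W_a}(x)$ inside $\Stab_{W_a^\prime}(x)$ and that they intersect trivially. The intersection is trivial because $W_a \cap \{\gamma_i\}_{i\in J} = \{1\}$ by (\ref{equality}) (the $\gamma_i$ with $i \neq 0$ do not fix the alcove pointwise). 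Normality follows because $W_a$ is normal in $W_a^\prime$, so conjugation by any element of $\Stab_{W_a^\prime}(x)$ preserves $W_a$, and it obviously preserves $\Stab(x)$, hence preserves the intersection $\Stab_{W_a}(x)$.

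The real content is the containment $\subseteq$: every element of $W_a^\prime$ fixing $x$ decomposes appropriately. Here I would use that $x \in \bar{\mathcal{A}}$. Take $g \in \Stab_{W_a^\prime}(x)$ and write it via (\ref{extiso}) as a pair $(a, \gamma)$ with $a \in W_a$ and $\gamma \in P^\vee/Q^\vee$; concretely $g = a \circ \tilde\gamma$ where $\tilde\gamma \in \{\gamma_i\}_{i\in J}$ is the canonical alcove-preserving lift of $\gamma$. Then $g(x) = x$ means $a(\tilde\gamma(x)) = x$. Now $\tilde\gamma(x) \in \bar{\mathcal{A}}$ because $\tilde\gamma$ maps $\bar{\mathcal{A}}$ to itself, and also $x \in \bar{\mathcal{A}}$; since $\bar{\mathcal{A}}$ is a fundamental domain for $W_a$ and two points of the closed fundamental domain lying in the same $W_a$-orbit must be equal, we conclude $\tilde\gamma(x) = x$ and hence $a(x) = x$. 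Therefore $a \in \Stab_{W_a}(x)$ and $\tilde\gamma \in \Stab_{\{\gamma_i\}}(x)$, i.e. $\gamma \in \Stab_{P^\vee/Q^\vee}(x)$, which is exactly the decomposition we wanted.

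I would then wrap up by observing that this argument simultaneously proves both the equality of sets and that the decomposition is compatible with the group structure: the identification of $\{\gamma_i\}_{i\in J}$ with $P^\vee/Q^\vee$ is a group isomorphism and its restriction to the stabilizer of $x$ is a subgroup, so $\Stab_{P^\vee/Q^\vee}(x)$ makes sense as written, and the internal semidirect product structure is inherited from the one on all of $W_a^\prime$. The main obstacle — really the only subtle point — is the step "$\tilde\gamma(x) = x$": one must be careful that it genuinely uses that the closed fundamental alcove meets each $W_a$-orbit exactly once when restricted to the \emph{relative interior strata}, or more simply that if $y, y' \in \bar{\mathcal{A}}$ and $y' = w(y)$ for $w \in W_a$ then $y = y'$; this is the standard fact that $\bar{\mathcal{A}}$ is a strict fundamental domain. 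Everything else is formal manipulation of the semidirect product decomposition (\ref{extiso}).
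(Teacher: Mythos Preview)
Your proof is correct and follows essentially the same route as the paper: write an element of $\Stab_{W_a^\prime}(x)$ as $a\gamma$ with $a\in W_a$ and $\gamma$ an alcove-preserving element, observe that $\gamma(x)\in\bar{\mathcal{A}}$, and invoke that $\bar{\mathcal{A}}$ is a strict fundamental domain for $W_a$ to force $\gamma(x)=x$ and hence $a(x)=x$. The paper's proof is considerably more terse and does only this key step, leaving the semidirect product structure (normality of $\Stab_{W_a}(x)$ and trivial intersection) implicit; your discussion of these points is correct but not strictly needed here.
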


\begin{proof}Let $t(\mathbb{Z})w \in W_a$ and $\gamma \in P^\vee/Q^\vee$ be such that $t(\mathbb{Z})w\gamma(x)=x$. Define $y:=\gamma(x) \in \bar{\mathcal{A}}$. Now $t(\mathbb{Z})w(y)=x$ with $x,y\in \bar{\mathcal{A}}$, and because $\bar{\mathcal{A}}$ is a strict fundamental domain for the action of $W_a$ we can conclude $x=y$, so $\gamma(x)=x$. This also implies that $t(\mathbb{Z})w(x)=x$.  
\end{proof}

\subsection{The centralizer of an involution in a reflection group}

In this section we recall some known results on centralizers of involutions in reflection groups. Let $(W,S)$ be a finite Coxeter group and let $u\in W$ be an involution. We want to determine the centralizer $C_W(u)$ of $u$ in $W$. By the classification of involutions in Coxeter groups there is a subset $I\subseteq S$ such that $u$ is conjugate in $W$ to the involution $w_I$: the unique longest element $-1$ in the parabolic subgroup $W_I$. Felder en Veselov in \cite{Felder} observe the following.
\begin{prop}
If $W_I$ is a parabolic subgroup of $W$ that satisfies the $(-1)$ condition then $C_W(w_I)=N_W(W_I)$. 
\end{prop}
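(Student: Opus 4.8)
The plan is to prove both inclusions $C_W(w_I)\subseteq N_W(W_I)$ and $N_W(W_I)\subseteq C_W(w_I)$. The second inclusion is the easy one: if $n\in N_W(W_I)$ then conjugation by $n$ is an automorphism of $W_I$ preserving lengths with respect to the generating set $S_I$ (since $n$ permutes the reflections lying in $W_I$, hence permutes $S_I$ up to the relevant structure), so it fixes the unique longest element $w_I$ of $W_I$; equivalently, $w_I$ is characterized inside $W_I$ as the unique element acting as $-1$ on $V_I$, a property manifestly invariant under any automorphism of $W_I$ induced by conjugation in $W$. Hence $nw_In^{-1}=w_I$, i.e. $n\in C_W(w_I)$.

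For the harder inclusion $C_W(w_I)\subseteq N_W(W_I)$, the key observation is that $W_I$ can be recovered intrinsically from the involution $w_I$ once we know $W_I$ satisfies the $(-1)$-condition. Concretely, let $V^{w_I}$ and $V^{-w_I}$ denote the $(+1)$- and $(-1)$-eigenspaces of $w_I$ acting on $V$. Because $w_I$ acts as $-1$ on $V_I$ and fixes a complement, we have $V^{-w_I}=V_I$. Now if $g\in C_W(w_I)$, then $g$ commutes with $w_I$, so $g$ preserves each eigenspace; in particular $g(V_I)=g(V^{-w_I})=V^{-w_I}=V_I$. Since $W_I$ is the parabolic subgroup generated by the reflections $s_\alpha$ for roots $\alpha\in V_I$ (i.e. $W_I = \langle s_\alpha : \alpha\in R\cap V_I\rangle$, which holds because $W_I$ is a reflection subgroup whose root system is exactly $R\cap V_I$), and because $g$ maps the set $R\cap V_I$ bijectively to itself (as $g\in W$ permutes $R$ and fixes $V_I$ setwise), conjugation by $g$ sends each generating reflection $s_\alpha$ of $W_I$ to $s_{g\alpha}$, again a reflection in $W_I$. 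Therefore $gW_Ig^{-1}=W_I$, i.e. $g\in N_W(W_I)$.

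The main obstacle I expect is justifying cleanly that the root system of the reflection subgroup $W_I$ is precisely $R\cap V_I$, so that $W_I$ is determined by the subspace $V_I$ alone. This is where the $(-1)$-condition is genuinely used: it guarantees $w_I$ exists as the longest element and that $V_I = V^{-w_I}$ is exactly the span of the roots in $R\cap V_I$ with no extra roots sneaking in from outside $W_I$ but lying in $V_I$ — which could happen in principle for a parabolic that is not "saturated." In fact for standard parabolic subgroups of a Coxeter group this saturation is automatic ($R\cap V_I$ equals the root subsystem of $W_I$ by the theory of parabolic subgroups, Bourbaki Ch.~VI), so the argument goes through; one should simply cite this. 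The rest is bookkeeping: that commuting with $w_I$ forces preservation of $V_I$, and that a Weyl-group element fixing $V_I$ setwise normalizes the reflection subgroup it generates.

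Finally, since the statement is about an arbitrary involution $u$ conjugate to $w_I$, one notes that $C_W(u)$ and $N_W(W_I)$ are both well-defined up to conjugacy in the obvious compatible way, so it suffices to treat the case $u=w_I$ as above; replacing $u$ by $gug^{-1}$ replaces $W_I$ by $gW_Ig^{-1}$ and conjugates both sides of the asserted equality simultaneously.
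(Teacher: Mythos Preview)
Your proof is correct and follows essentially the same route as the paper: for $N_W(W_I)\subseteq C_W(w_I)$ you use that $w_I$ is intrinsically characterized in $W_I$ (as the element acting by $-1$ on $V_I$), and for $C_W(w_I)\subseteq N_W(W_I)$ you use that a centralizing element preserves the $(-1)$-eigenspace $V_I$ and hence the root subsystem $R\cap V_I$ generating $W_I$. The paper's proof is the same argument compressed into three lines, with the saturation fact $W_I=W(R\cap V_I)$ stated as the observation $W_u^-=W_I$ rather than discussed; your extra care on this point is warranted but not a different method.
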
 
\begin{proof}
The element $u$ is the unique longest element of $W_I$, so that $wuw^{-1}=u$ for all $w\in N_W(W_I)$ and consequently:  $N_W(W_I)<C_W(u)$. For the other inclusion let $w\in C_W(u)$. We need to prove that $ws_{\alpha_i}w^{-1}\in W_I$, or equivalently: $w\cdot \alpha_i \in R \cap V_I$ for all $i\in I$. This holds since the element $w$ preserves the eigenspace decomposition of the involution $u$.  
\end{proof}

Using this result we can use the classification of normalizers of parabolic subgroups of reflection groups by Howlett \cite{Howlett}. We want to mention here that a lot of the results of this section also appear in \cite{Looijenga}. Let $u=w_I$ and decompose $V$ into $\pm 1$-eigenspaces for $u$: $V=V_u^+ + V_u^-$ where $V_u^-=V_I$. This defines two orthogonal root systems and corresponding Coxeter groups
\[R_u^\pm=R\cap V_u^\pm  \quad , \quad W_u^\pm=W(R_u^\pm) . \]
Observe that $W_u^-=W_I$ and that the eigenspace $V_u^-$ is spanned by the roots of $R_u^-$. The group $W_u=W_u^-\times W_u^+$ is generated by all reflections that commute with $u$ and is contained in the centralizer $C_W(u)$ of $u$. This centralizer also contains a non-reflection part $G_u$ which now describe. Consider the two Weyl elements
\begin{align*}
\rho_\pm = \frac{1}{2} \sum_{\alpha \in R_u^\pm(+)} \alpha
\end{align*}  
where the sum runs over all positive roots of $R_u^\pm$, which we denote by $R_u^\pm(+)$. The set
\[ R_u^c = \{ \alpha \in R \ ; \ (\alpha,\rho_+)=(\alpha,\rho_-)=0 \} \]
is a root system which can be written as an orthogonal disjoint union of subroot systems $R_u^c=R_1\cup R_2$. These factors are isomorphic root systems and are exchanged by the involution $u$. The Weyl group $W(R_u^c)$ is a product 
\[W(R_u^c)=W(R_1)\times W(R_2)\]
whose factors are exchanged by conjugacy with $u$. The group $G_u$ consists of all elements of $W(R_u^c)$ that commute with $u$ and is the diagonal of this product
\[ G_u = \{ (w,uwu) \ ; \ w \in W(R_1) \} .\]
This group is generated by pairs of commuting reflections $s_\alpha s_{u\cdot \alpha}$ with $\alpha \in R_1$ and is isomorphic to $W(R_1)$. Now we can formulate the main theorem of this section. 

\begin{thm}The centralizer of an involution $u\in W$ splits as a semidirect product
\begin{align*}
C_W(u)  &\cong W_u \rtimes G_u \\
& \cong W_u^- \rtimes G_u^+ 
\end{align*}
where $G_u^+$ is the reflection group defined by $G_u^+=\{w\in W \ ; \ wI=I\}$ which contains $W_u^+$ as a normal subgroup.  
\end{thm}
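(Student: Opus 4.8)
The plan is to establish the two claimed decompositions of $C_W(u)$ by assembling the pieces $W_u^-$, $W_u^+$ and $G_u$ introduced above. First I would verify that $C_W(u)$ contains the subgroup $W_u = W_u^- \times W_u^+$ and the subgroup $G_u$: the reflections $s_\alpha$ with $\alpha \in R_u^{\pm}$ commute with $u$ because $u$ acts as $\pm 1$ on $V_u^{\pm}$, while each generator $s_\alpha s_{u\cdot\alpha}$ of $G_u$ commutes with $u$ by construction since $u$ conjugates $s_\alpha$ to $s_{u\cdot\alpha}$. Next I would show that $W_u$ is normal in $C_W(u)$: an element $w$ of the centraliser preserves the eigenspace decomposition $V = V_u^+ \oplus V_u^-$ (this was already used in the proof of the preceding proposition), hence permutes the roots in $R_u^{\pm} = R \cap V_u^{\pm}$ and so conjugates $W_u^{\pm}$ into itself. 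Then I would prove $C_W(u) = W_u \cdot G_u$ and $W_u \cap G_u = 1$, which gives the semidirect product $C_W(u) \cong W_u \rtimes G_u$; here I would invoke Proposition (the Felder--Veselov observation) identifying $C_W(u)$ with $N_W(W_I)$ and then appeal to Howlett's description of $N_W(W_I)$ as $W_I \rtimes N_I$ with the complement $N_I$ built from the ``relative'' root system — matching $N_I$ with $W_u^+ \rtimes G_u$ after regrouping, or directly checking that the product $W_u \cdot G_u$ exhausts the centraliser by a dimension/order count comparing with Howlett's formula.

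For the second isomorphism I would define $G_u^+ = \{ w \in W \ ; \ wI = I \}$ and argue as follows. Such a $w$ permutes the simple roots $\{e_i\}_{i\in I}$ of $V_u^- = V_I$, hence normalises $W_I = W_u^-$, so $G_u^+ \subseteq N_W(W_u^-)$; conversely, because the action of $w\in C_W(u)$ preserves $V_u^-$ and maps positive roots of $R_u^-$ (determined by the chosen simple system) to positive or negative roots, precomposing with an element of $W_u^-$ one arranges $wI = I$, which shows $C_W(u) = W_u^- \rtimes G_u^+$ once I check $W_u^- \cap G_u^+ = 1$ (an element of $W_u^-$ fixing the simple system of $R_u^-$ pointwise is trivial). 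Finally I would identify $G_u^+$ with $W_u^+ \rtimes G_u$: the reflections in $R_u^+$ obviously fix $V_u^-$ pointwise hence lie in $G_u^+$ and form the normal subgroup $W_u^+$, while $G_u$ fixes $V_u^-$ pointwise as well (each factor $s_\alpha s_{u\cdot\alpha}$ with $\alpha\in R_1\subset R_u^c$ acts trivially on the span of $\rho_-$-orthogonal... — more precisely, $\alpha, u\cdot\alpha \perp V_u^-$ forces $s_\alpha s_{u\cdot\alpha}$ to fix $V_u^-$), and together they generate $G_u^+$ by Howlett's structure theorem, giving the containment of $W_u^+$ as a normal subgroup.

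The main obstacle I anticipate is the last identification $G_u^+ \cong W_u^+ \rtimes G_u$, i.e.\ showing that these two visibly-present subgroups actually generate the full stabiliser $\{w \in W \ ; \ wI = I\}$ and that the extension splits as claimed. This is exactly where Howlett's classification of normalisers of parabolic subgroups is indispensable: one must match his ``complement'' $N_I$ — described via the root subsystem orthogonal to $V_I$ together with a group of diagram automorphisms of $I$ realised inside $W$ — with the concrete pair $(W_u^+, G_u)$ defined here through $\rho_{\pm}$ and the splitting $R_u^c = R_1 \cup R_2$. Verifying that $G_u$, the diagonal in $W(R_1)\times W(R_2)$, is precisely Howlett's automorphism complement, and that no further elements of $W$ stabilise $I$, is the computational heart of the argument; everything else is bookkeeping with eigenspaces and the fact that $\bar{\mathcal{C}}$ (resp.\ the chosen simple system) is a strict fundamental domain.
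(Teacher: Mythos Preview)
The paper does not actually prove this theorem: after the Felder--Veselov proposition identifying $C_W(u)$ with $N_W(W_I)$, it simply refers to Howlett's classification of normalisers of parabolic subgroups (and to \cite{Looijenga}), introduces the objects $W_u^\pm$, $R_u^c$, $G_u$, and then states the theorem without further argument. Your outline is therefore already more than what the paper provides, and its overall architecture---reduce to $N_W(W_I)$, then read off Howlett's decomposition $N_W(W_I)=W_I\rtimes N_I$ and match $N_I$ with $G_u^+ = W_u^+\rtimes G_u$---is exactly the intended one.

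One point in your sketch is not correct, however. You claim that $G_u$ fixes $V_u^-$ pointwise because ``$\alpha, u\cdot\alpha \perp V_u^-$''. This is false: the roots in $R_u^c$ are only orthogonal to $\rho_-$, not to all of $V_u^-$, and since $u$ genuinely exchanges $R_1$ and $R_2$ the roots $\alpha\in R_1$ cannot lie in $V_u^+$. What is true is weaker but sufficient: for $x\in V_u^-$ one computes
\[
s_\alpha s_{u\alpha}(x)=x-(\alpha,x)(\alpha-u\alpha),
\]
and $\alpha-u\alpha\in V_u^-$, so $G_u$ preserves $V_u^-$. A short check (using $(\alpha,\rho_-)=0$, which forces $(\alpha,\alpha_i)\in\{0,\pm 1\}$ for $i\in I$ with opposite signs occurring in pairs) shows that each generator $s_\alpha s_{u\alpha}$ permutes the simple roots $\{\alpha_i\}_{i\in I}$ rather than fixing them. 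That is enough to conclude $G_u\subseteq G_u^+$, which is all you need. With this correction your plan goes through; the genuinely nontrivial input remains, as you anticipated, Howlett's theorem that $W_u^+$ and $G_u$ together exhaust the complement $N_I$.
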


\subsection{Root tori and their invariants}
If the type of a root system $R$ occurs in Equation \ref{delpezzotype} we say it is of del Pezzo type. Such root systems are products of root systems of type $ADE$ so that all roots have the same length. This implies that we can identify the root (resp weight) and the coroot (resp coweight) lattices $R$ and $R^\vee$ (resp $P$ and $P^\vee$). To simplify notation we use this identification from now on. To a root system $R$ of del Pezzo type we associate the complex torus $\mathbb{T}=\mathbb{C}^\ast \otimes P = \Hom(Q,\mathbb{C}^\ast)$.  It has a natural action of the Weyl group $W$. 
\begin{thm}
The quotient $W\backslash \mathbb{T}$ is an affine toric variety and the algebra of $W$-invariants of $\mathbb{Z}[Q]$ is the semi-group algebra given by
\begin{equation}\label{invariantiso}
\mathbb{Z}[Q]^W \cong \mathbb{Z}[P_+ \cap Q]
\end{equation}
where $P_+=\sum_{i=1}^n \mathbb{Z}_{\geq 0}\varpi_i$ is the lattice cone spanned by the fundamental weights. \end{thm}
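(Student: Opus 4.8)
The plan is to settle the $\mathbb{Z}$-module statement by elementary means and then to obtain the ring isomorphism by comparing $\mathbb{T}=\Hom(Q,\mathbb{C}^\ast)$ with the larger torus $\widehat{\mathbb{T}}=\Hom(P,\mathbb{C}^\ast)$ attached to the weight lattice, reducing to the classical theorem of Chevalley. Since the closed fundamental chamber $\overline{\mathcal{C}}$ is a strict fundamental domain for $W$ on $V$, each $W$-orbit in $Q$ contains exactly one dominant element, so $P_+\cap Q$ is a complete set of representatives for $W\backslash Q$. If $f=\sum_{\mu\in Q}a_\mu e^\mu$ lies in $\mathbb{Z}[Q]^W$ then $\mu\mapsto a_\mu$ is constant on $W$-orbits, hence $f=\sum_{\lambda\in P_+\cap Q}a_\lambda\,m_\lambda$ where $m_\lambda=\sum_{\mu\in W\lambda}e^\mu$ is the orbit sum. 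Thus $\{m_\lambda\}_{\lambda\in P_+\cap Q}$ is a $\mathbb{Z}$-basis of $\mathbb{Z}[Q]^W$ and $x^\lambda\mapsto m_\lambda$ is a $\mathbb{Z}$-module isomorphism $\mathbb{Z}[P_+\cap Q]\xrightarrow{\ \sim\ }\mathbb{Z}[Q]^W$. This map is \emph{not} a ring homomorphism: in the dominance order on weights one only has $m_\lambda m_\mu=m_{\lambda+\mu}+\sum_{\nu\prec\lambda+\mu}c^{\nu}_{\lambda\mu}\,m_\nu$ with $c^{\nu}_{\lambda\mu}\in\mathbb{Z}_{\geq 0}$ (every weight of the product is $\preceq\lambda+\mu$, and $\lambda+\mu$ occurs once), so the algebra structure needs a separate argument.

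For that, observe that the inclusion $Q\subseteq P$ makes $\mathbb{Z}[P]$ graded by the finite abelian group $P/Q$, with $e^\lambda$ in degree $\bar\lambda$ and degree-zero part exactly $\mathbb{Z}[Q]$. Since $s_i\lambda-\lambda=-(\lambda,\alpha_i^\vee)\alpha_i\in Q$ for every simple reflection, $W$ acts trivially on $P/Q$; hence it preserves this grading, every orbit sum is homogeneous, and $\mathbb{Z}[P]^W$ is a graded subring with $(\mathbb{Z}[P]^W)_{\bar 0}=\mathbb{Z}[Q]^W$. Now I invoke Chevalley's theorem (see \cite{Bourbaki}): $\mathbb{Z}[P]^W$ is the polynomial algebra on the orbit sums $m_{\varpi_i}$ of the fundamental weights (in the reducible case, of all irreducible components). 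Since these fundamental weights form a $\mathbb{Z}$-basis of $P$ and each $m_{\varpi_i}$ is homogeneous of degree $\bar\varpi_i$, this polynomial algebra is exactly the semigroup algebra $\mathbb{Z}[P_+]$ of the free monoid $P_+$, its $P/Q$-grading being induced by $P_+\hookrightarrow P\twoheadrightarrow P/Q$. Passing to degree-zero parts yields the desired ring isomorphism
\[
\mathbb{Z}[Q]^W=\bigl(\mathbb{Z}[P]^W\bigr)_{\bar 0}=\bigl(\mathbb{Z}[P_+]\bigr)_{\bar 0}=\mathbb{Z}\bigl[\{\lambda\in P_+ : \lambda\in Q\}\bigr]=\mathbb{Z}[P_+\cap Q].
\]
Geometrically, over $\mathbb{C}$ this is the statement that $W\backslash\mathbb{T}=\mu\backslash\bigl(W\backslash\widehat{\mathbb{T}}\bigr)$, where the finite group $\mu=\ker(\widehat{\mathbb{T}}\to\mathbb{T})=\Hom(P/Q,\mathbb{C}^\ast)$ acts diagonally on the affine space $W\backslash\widehat{\mathbb{T}}$ with the coordinates $m_{\varpi_i}$ as simultaneous eigenvectors.

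Finally, being the intersection of the lattice $Q$ with the rational convex cone spanned by the fundamental weights, the monoid $P_+\cap Q$ is saturated and, by Gordan's lemma, finitely generated; hence $\Spec\mathbb{Z}[P_+\cap Q]$, and with it $W\backslash\mathbb{T}$, is a normal affine toric variety. I expect the only genuine input to be Chevalley's theorem; everything else is bookkeeping with the pair $Q\subseteq P$. The one point deserving care is that the polynomial generators of $\mathbb{Z}[P]^W$ can be taken homogeneous for the $P/Q$-grading, which is immediate here because Bourbaki's generators are the orbit sums $m_{\varpi_i}$ themselves.
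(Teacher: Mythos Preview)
Your argument is correct. The paper's own proof is minimal: it simply cites \cite{Lorenz}, Section~6.3.5 for the isomorphism $\mathbb{Z}[Q]^W\cong\mathbb{Z}[P_+\cap Q]$ and then observes that the spectrum of a semigroup algebra is by definition an affine toric variety. You instead give a self-contained derivation, reducing to the classical Chevalley theorem $\mathbb{Z}[P]^W\cong\mathbb{Z}[P_+]$ (which the paper quotes immediately after the theorem but does not use in the proof) by exploiting the $P/Q$-grading on $\mathbb{Z}[P]$ and reading off the degree-zero piece. This is exactly the kind of argument one would expect to find behind the citation, and it is cleanly executed; your explicit warning that the orbit-sum bijection is only $\mathbb{Z}$-linear, with the ring structure coming from a different map $x^{\varpi_i}\mapsto m_{\varpi_i}$, is a point often glossed over. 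You also add Gordan's lemma to conclude that $P_+\cap Q$ is finitely generated and saturated, yielding normality of the toric variety, which is a mild strengthening of what the paper states.
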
  

\begin{proof}
For the proof of the isomorphism of Equation \ref{invariantiso} we refer to \cite{Lorenz} Section $6.3.5$. The coordinate ring of $W\backslash \mathbb{T}$ is $\mathbb{C}[\mathbb{T}]^W \cong \mathbb{C} \otimes_{\mathbb{Z}} \mathbb{Z}[P_+ \cap Q]$. It is the complexification of a semi-group algebra and its spectrum is by definition an affine toric variety. 
\end{proof}

This theorem is a generalisation of classical exponential invariant theory for root systems as described in \cite{Bourbaki} VI \S 3. The main theorem of that section states that the algebra of $W$-invariants of $\mathbb{Z}[P]$ is a polynomial algebra 
\[ \mathbb{Z}[P]^W \cong \mathbb{Z}[P_+].\]

This is a toric analogue of a well known theorem of Chevalley. The algebra $\mathbb{C}[P]$ is the coordinate ring of the algebraic torus $T=\mathbb{C}^
\ast \otimes Q$. Since the $W$-invariants form a polynomial algebra we can rephrase the theorem as $W\backslash T \cong \mathbb{C}^n$. The torus $T$ is a finite cover of $\mathbb{T}$ where the group of deck transformations is isomorphic to $P/Q$ and there is an isomorphism of orbifolds
\[W\backslash \mathbb{T} \cong (P/Q)\backslash \mathbb{C}^n.\]

The action of $W$ on $\mathbb{T}$ on the complement of the mirrors $\mathbb{T}^\circ$ is not free in general: the group $P/Q$ can have fixed points in $\mathbb{T}^\circ$. The stabilizers are described by the following lemma.

\begin{lem}
For $t\in \mathbb{T}$ the stabilizer $\Stab_W(t)$ is the extension of a reflection group $\Stab_W^r(t)$ by a finite subgroup of the automorphisms of a fundamental alcove of $W_a$.
\end{lem}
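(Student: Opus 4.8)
The plan is to mimic, on the torus $\mathbb{T}$, the analysis carried out for the affine Weyl group on the vector space $V$ via the exponential map. Recall from the previous subsection that $W\backslash\mathbb{T}\cong(P/Q)\backslash\mathbb{C}^n$, and that $\mathbb{T}=\mathbb{C}^\ast\otimes P$ is covered by $T=\mathbb{C}^\ast\otimes Q$ with deck group $P/Q$; equivalently $\mathbb{T}=T/(P/Q)$ with the extended affine Weyl group $W_a'=P\rtimes W$ acting on $V\otimes\mathbb{C}$, or more concretely we use the exponential $\exp:V\otimes\mathbb{C}\to\mathbb{T}$, $x\mapsto e^{2\pi i x}$, which is $W$-equivariant and whose kernel is $Q$ (acting by translations). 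Pulling back, the stabiliser $\Stab_W(t)$ for $t=\exp(x)$ is the image in $W$ of $\Stab_{W_a'}(x)\cap(\,\text{affine part mapping to }W)$; more precisely, if $t=\exp(x)$ then $w\in\Stab_W(t)$ iff $wx-x\in Q$, i.e. iff $t(q)w\in\Stab_{W_a'}(x)$ for some $q\in Q$, so $\Stab_W(t)$ is the image of $\Stab_{W_a'}(x)$ under the projection $W_a'=P\rtimes W\to W$ restricted to the subgroup of elements whose translation part lies in $Q$.

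First I would reduce to a convenient representative: every $W$-orbit on $\mathbb{T}$ meets $\exp(\bar{\mathcal{A}})$ because $\bar{\mathcal{A}}$ (the closed fundamental alcove) is a fundamental domain for $W_a=Q\rtimes W$ on $V$ (and complexifying, one translates the imaginary part away so only the real part, which lies in a $W_a$-translate of $\bar{\mathcal{A}}$, matters for the stabiliser computation; alternatively one argues with the polar coordinates $t=e^{2\pi i(x+iy)}$ and notes the stabiliser only depends on the real part up to conjugacy). So assume $x\in\bar{\mathcal{A}}$. Then by Lemma \ref{stab}, $\Stab_{W_a'}(x)=\Stab_{W_a}(x)\rtimes\Stab_{P/Q}(x)$, where $\Stab_{W_a}(x)$ is the reflection subgroup with root system $R(x)=\{\alpha\in R:(\alpha,x)\in\mathbb{Z}\}$ from \eqref{BorelSiebenthal}, and $\Stab_{P/Q}(x)$ is a subgroup of the finite abelian group $P/Q$ acting by diagram automorphisms of $\bar{\mathcal{A}}$.

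Next I would push this splitting down to $W$. Set $\Stab_W^r(t)$ to be the image in $W$ of $\Stab_{W_a}(x)$ together with the translation parts that land back in $Q$; concretely $\Stab_W^r(t)$ is the reflection group generated by the $s_\alpha$ with $\alpha\in R$ and $\alpha(t)=1$ (i.e. $(\alpha,x)\in\mathbb{Z}$), which is exactly the subgroup of $\Stab_W(t)$ generated by reflections — it is normal in $\Stab_W(t)$ because $R(x)$, being characterised by a condition ($\alpha(t)=1$) preserved by any element fixing $t$, is stable under all of $\Stab_W(t)$. The quotient $\Stab_W(t)/\Stab_W^r(t)$ is then identified with the image of $\Stab_{P/Q}(x)$, a finite group of diagram automorphisms of the sub-root-system $R(x)$ (these are precisely the automorphisms of the extended Dynkin diagram of $R(x)$ coming from the $\gamma_i$'s of the previous subsection). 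Thus $\Stab_W(t)$ is the extension $1\to\Stab_W^r(t)\to\Stab_W(t)\to D\to 1$ with $D$ a finite group of diagram automorphisms, which is the assertion.

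The main obstacle I anticipate is bookkeeping the passage from the vector space picture to the torus picture correctly: one must be careful that $\Stab_W(t)$ is governed by $\Stab_{W_a'}(x)$ \emph{intersected with the right subgroup} (the elements whose translational part lies in $Q$ rather than in all of $P$), since it is $W_a=Q\rtimes W$, not $W_a'=P\rtimes W$, that acts with $\mathbb{T}=V_{\mathbb{C}}/(Q+\text{compact directions})$, and correspondingly the diagram-automorphism part that survives is only the portion of $\Stab_{P/Q}(x)$ compatible with this. Handling the complexification (the imaginary part $y$ of $x+iy$) cleanly — arguing it can be absorbed or does not enlarge the stabiliser — is the other point requiring care; I would dispatch it by the standard observation that $\exp$ restricted to $V+i\bar C$ for a suitable cone is injective enough, or simply by noting $W$-stabilisers in $\mathbb{T}$ are computed on the "angular" part $V/Q$ after using that $\bar{\mathcal A}$ is a strict fundamental domain.
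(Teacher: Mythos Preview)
Your overall approach matches the paper's: lift via the exponential to $V_{\mathbb{C}}$, identify $\Stab_W(t)$ with an affine stabiliser, and invoke Lemma~\ref{stab}. But you have the kernel of the exponential map wrong, and this is what generates the ``main obstacle'' you describe.

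Since $\mathbb{T}=\mathbb{C}^\ast\otimes P=\Hom(Q,\mathbb{C}^\ast)$, tensoring the exponential sequence $0\to\mathbb{Z}\to\mathbb{C}\to\mathbb{C}^\ast\to 1$ with $P$ yields
\[
0\longrightarrow P\longrightarrow V_{\mathbb{C}}\xrightarrow{\ \exp\ }\mathbb{T}\longrightarrow 1,
\]
so the kernel is $P$, not $Q$. (It is the other torus $T=\mathbb{C}^\ast\otimes Q=\Hom(P,\mathbb{C}^\ast)$ for which the kernel is $Q$; you have swapped the two.) Hence $w\in\Stab_W(t)$ if and only if $w\cdot z-z\in P$, which says exactly that $w$ lies in the image of the \emph{full} $\Stab_{W_a'}(z)$ under the projection $W_a'=P\rtimes W\to W$. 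Because $P$ acts freely on $V_{\mathbb{C}}$, this projection is injective on stabilisers, so $\Stab_W(t)\cong\Stab_{W_a'}(z)$, and Lemma~\ref{stab} gives the splitting $\Stab_{W_a}(z)\rtimes\Stab_{P/Q}(z)$ on the nose. No intersection with a ``translational part in $Q$'' subgroup is needed, and the entire diagram-automorphism factor $\Stab_{P/Q}(z)$ survives, not just a portion of it.

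Once this is corrected your argument collapses to the paper's, which is essentially three lines. Your caution about the imaginary part is reasonable---the paper is terse there as well---but it is handled by the displayed formula $t(\lambda)w\cdot(x+iy)=(w\cdot x+\lambda)+i(w\cdot y)$: the stabiliser of $x+iy$ in $W_a'$ is $\Stab_{W_a'}(x)\cap\{w:w\cdot y=y\}$, which is still of the form (reflection group)$\rtimes$(diagram automorphisms) by the same Lemma~\ref{stab} applied inside the smaller reflection group fixing $y$.
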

\begin{proof}
Consider the exponential sequence

\begin{center}
\begin{tikzcd} 
0 \arrow{r} & \mathbb{Z} \arrow{r} & \mathbb{C} \arrow{r}{\exp} & \mathbb{C}^\ast \arrow{r} & 1
\end{tikzcd}
\end{center}
where $\exp: z \mapsto e^{2 \pi i z}$. By tensoring from the right with $P$ we obtain another exact sequence

\begin{center}
\begin{tikzcd}\label{exseq}
0 \arrow{r} & P  \arrow{r} & V_{\mathbb{C}} \arrow{r}{\exp} & \mathbb{T} \arrow{r} & 1
\end{tikzcd}
\end{center}

where $V_{\mathbb{C}} = \mathbb{C} \otimes V$ is the complexification of $V$. From the sequence we read off that $W\backslash \mathbb{T} \cong W_a^\prime \backslash V_\mathbb{C}$ where the extended affine group $W_a^\prime=P\rtimes W$ acts on $V_\mathbb{C}$ by the formula 
\[ t(\lambda)w \cdot (x+iy) = (w\cdot x+\mathbb{Z}) +i(w\cdot y)\]
for $\mathbb{Z} \in Q$ and $w\in W$. Write $z=\log t$, by Lemma \ref{stab} the group $\Stab_W(t)$  is isomorphic to 
\[ \Stab_{W_a^\prime}(z) \cong \Stab_{W_a}(z)\rtimes \Stab_{P/Q}(z).\] 
The group $\Stab_{W_a}(z)$ is a Weyl group generated by the reflection in the mirrors that contain $z$. 
\end{proof}

\subsection{Real root tori and their connected components}

Complex conjugacy on $\mathbb{C}^\ast$ defines a real form on the complex torus $\mathbb{T}=\mathbb{C}^\ast \otimes P$. This in turn defines a real form on the quotient $W \backslash \mathbb{T}$. Let $q:\mathbb{T} \rightarrow W\backslash \mathbb{T}$ be the quotient map. The real points of $W\backslash \mathbb{T}$ are the points $q(t)$ such that $t$ and $\bar{t}$ are in the same $W$-orbit so that

\[ q(t)\in (W\backslash \mathbb{T})(\mathbb{R}) \Longleftrightarrow w\cdot t=\bar{t} \quad \text{for some} \ w\in W.\] 
We will prove in Proposition \ref{toricTits} that we can assume that $w$ is an involution in $W$. Every involution in $W$ defines a real form on $\mathbb{T}$ by composing with complex conjugation.   The real points of such a real form are given by
\[ \mathbb{T}_u(\mathbb{R}) = \left\{ t\in \mathbb{T} \ ; \ u\cdot t = \bar{t} \right\}. \]
The following proposition is a slight modification of a result due to Tits (\cite{Looijenga}, Proposition 2.2) to the present situation. The proof is similar to the one given there.   

\begin{prop}[Tits]\label{toricTits}
\begin{displaymath}
q^{-1}(W \backslash \mathbb{T})(\mathbb{R}) = \bigcup_{u\in W ; u^2=1} \mathbb{T}_u(\mathbb{R})
\end{displaymath}
\end{prop}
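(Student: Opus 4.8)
The plan is to prove the two inclusions separately; the nontrivial content is entirely in showing that if $w\cdot t = \bar t$ for some $w\in W$, then the same relation holds with $w$ replaced by an involution. The reverse inclusion is immediate: if $t\in\mathbb{T}_u(\mathbb{R})$ for some involution $u$, then $u\cdot t=\bar t$ exhibits $t$ and $\bar t$ in the same $W$-orbit, so $q(t)$ is a real point and $t\in q^{-1}(W\backslash\mathbb{T})(\mathbb{R})$.

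For the forward inclusion, suppose $w\cdot t=\bar t$ with $w\in W$. Applying complex conjugation to both sides and using that $W$ acts by integral matrices on $P$ (hence commutes with conjugation on $\mathbb{T}=\mathbb{C}^\ast\otimes P$), I get $\bar w\cdot \bar t = \overline{w\cdot t}$, i.e. $w\cdot \bar t = \bar{\bar t}=t$ is \emph{not} quite what I want; rather, conjugating $w\cdot t=\bar t$ gives $w\cdot\bar t = t$, so $w^2\cdot t = w\cdot\bar t = t$ and hence $w^2\in\Stab_W(t)$. The idea now is to modify $w$ within the coset $\Stab_W(t)\,w$ — any $w'=gw$ with $g\in\Stab_W(t)$ still satisfies $w'\cdot t=\bar t$ — so as to arrange $(w')^2=1$. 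Passing through the exponential sequence (\ref{exseq}) as in the previous lemma, write $z=\log t\in V_{\mathbb{C}}$; the relation $w\cdot t=\bar t$ lifts to $w\cdot z \equiv \bar z \pmod P$, i.e. there is $\lambda\in P$ with $g:=t(\lambda)w\in W_a'$ satisfying $g\cdot z = \bar z$ (as an equality in $V_\mathbb{C}$, after absorbing the lattice ambiguity). Then $g^2\cdot z = g\cdot\bar z = z$, so $g^2\in\Stab_{W_a'}(z)$, and I may use Lemma \ref{stab} together with the structure of $\Stab_{W_a'}(z)$ as (an extension of) a reflection group fixing $z$.

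The key step — and the main obstacle — is the following linear-algebra/reflection-group fact, which is exactly the content of Tits' argument in \cite{Looijenga}, Proposition 2.2: if an element $g$ of an (extended) affine Weyl group satisfies $g\cdot z=\bar z$, then there is $h\in\Stab_{W_a'}(z)$ with $(hg)^2=1$. One proves this by considering the $g$-invariant decomposition $V_\mathbb{C}=V\oplus iV$ and the map $\zeta\mapsto g\cdot\bar\zeta$, which is \emph{complex-linear} and is an involution precisely when $g^2$ fixes $z$ up to the relevant stabiliser; an averaging/Cayley-transform argument inside the finite group $\Stab_{W_a'}(z)$ (which decomposes as a reflection group semidirect a finite automorphism group by Lemma \ref{stab}) produces the required correction $h$. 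Concretely, one shows $g^2$, lying in the finite group $\Stab_{W_a'}(z)$, can be written $g^2 = h\cdot (g h g^{-1})^{-1}\cdots$ — more cleanly, one uses that in a finite group every element of the form $g^2$ with $g$ normalising it has a "square root correction": since $g$ normalises $\Stab_{W_a'}(z)$ and $g^2$ lies in it, the coset $\Stab_{W_a'}(z)\,g$ contains an element of order dividing $2$, which follows from the fact that $\langle \Stab_{W_a'}(z), g\rangle / \Stab_{W_a'}(z)$ has a generator lifting to $g$ whose square is interior.

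Finally, descending back down the exponential sequence: the involution $u\in W_a'$ so obtained with $u\cdot z=\bar z$ projects to an element of $W$ (discarding the translation part, or rather: the image of $u$ in $W=W_a'/P$) which is again an involution and satisfies $u\cdot t=\bar t$ in $\mathbb{T}$, so $t\in\mathbb{T}_u(\mathbb{R})$. This gives $q^{-1}(W\backslash\mathbb{T})(\mathbb{R})\subseteq\bigcup_{u^2=1}\mathbb{T}_u(\mathbb{R})$ and completes the proof. I expect the write-up to mostly consist of carefully transcribing the Tits argument from \cite{Looijenga} into the toric setting, with the exponential sequence (\ref{exseq}) and Lemma \ref{stab} doing the work of reducing from $\mathbb{T}$ to $V_\mathbb{C}$ and controlling the relevant stabiliser.
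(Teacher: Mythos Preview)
Your setup is correct: the easy inclusion is immediate, and for the hard one you rightly observe that $w^2\in\Stab_W(t)$ and that the task is to replace $w$ by an involution within the coset $\Stab_W(t)\cdot w$. But the key step is not actually carried out. The phrases ``averaging/Cayley-transform argument'' and ``square root correction'' are placeholders, and the general claim that a coset $Hg$ with $g^2\in H$ and $g$ normalising $H$ must contain an element of order $\le 2$ is \emph{false} for arbitrary finite groups (take $G=\mathbb{Z}/4\mathbb{Z}$, $H=2\mathbb{Z}/4\mathbb{Z}$, $g=1$). Something specific to reflection groups is needed here, and you have not supplied it.

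The paper's argument is short and uses exactly the structural input you are missing: the reflection part $\Stab_W^r(t)$ of the stabiliser is a finite reflection group acting on the tangent space $T_t\mathbb{T}$ (equivalently on $V$, since $\mathbb{T}$ is a torus), and hence has chambers on which it acts simply transitively. Because $\Stab_W(t)=\Stab_W(\bar t)$, conjugation by $w$ normalises $\Stab_W(t)$, so $w$ permutes these chambers. Now choose $w'\in\Stab_W^r(t)$ so that $u=ww'$ fixes a chamber; then $u\cdot t=\bar t$ still holds, $u^2$ lies in the stabiliser and fixes a chamber, and simple transitivity forces $u^2=1$. No passage to $V_{\mathbb{C}}$ via the exponential sequence is needed: the paper works directly on $\mathbb{T}$, and your detour through $W_a'$ and Lemma~\ref{stab} adds bookkeeping without providing the missing chamber argument.
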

\begin{proof}
Let $t\in \mathbb{T}$ be such that $w\cdot t = \bar{t}$ for some $w\in W$. We will prove that there is a $w^\prime$ in the reflection part $\Stab_W^r(t)$ of the stabilizer $\Stab_W(t)$ such that $u=ww^\prime$ is an involution in $\Stab_W(t)$. The reflection part of the stabilizer is a finite reflection group which acts on the tangent space $T_t\mathbb{T}$ through its complexified reflection representation. Since $\Stab_W(t)$ is also the stabilizer of $\bar{t}$ we see that
\[ w\Stab_W(t)w^{-1}=\Stab_W(w\cdot t)=\Stab_W(\bar{t})=\Stab_W(t). \]
This implies that $w$ permutes the chambers of $\Stab_W^r(t)$ so that we can find a $w^\prime \in \Stab_W^r(t)$ such that $u=ww^\prime$ leaves a chamber invariant. Since $\Stab_W^r(t)$ acts simply transitively on its chambers it follows that $u^2=1$.

\end{proof}


The group $W$ permutes the real tori $\mathbb{T}_u$ according to
\begin{displaymath}
w\cdot \mathbb{T}_u=\mathbb{T}_{wuw^{-1}}
\end{displaymath}
so $W$-equivalent real tori correspond to conjugate involutions. Furthermore the stabilizer of a real torus $\mathbb{T}_u(\mathbb{R})$ in $W$ is precisely the centralizer $C_W(u)$ of $u$ in $W$. We want to study the real tori $\mathbb{T}_u$ and especially their connected components in more detail. The involution $u$ acts naturally on the weight lattice $P$ and there exists a so called \emph{normal} basis for $P$ such that
\begin{equation}\label{latdecom}
\begin{aligned}
u &= I_{n_1} \oplus \left(-I_{n_2}\right) \oplus 
\left( \begin{array}{ccc} 0 & 1 \\ 1 & 0\end{array} \right)^{n_3} \\
P &= P_{1,u} \oplus P_{2,u} \oplus P_{3,u}.
\end{aligned}
\end{equation}
This is described in detail in \cite{Casselman}. The decomposition of Equation \ref{latdecom} is not unique but the triple $(n_1,n_2,n_3)$ which we call the \emph{type} of the involution $u\in W$ is an invariant of the involution. A choice of normal basis determines an isomorphism
\begin{equation}\label{realiso}
\mathbb{T}_u(\mathbb{R}) \cong (\mathbb{R}^\ast)^{n_1} \times (S^1)^{n_2} \times (\mathbb{C}^\ast)^{n_3}
\end{equation}
with $n_1,n_2,n_3\in \mathbb{N}$ and $n_1+n_2+2n_3=n$. This product consists of factors of \emph{split} ($\mathbb{R}^\ast$), \emph{compact} ($S^1$) and \emph{complex} ($\mathbb{C}^\ast$) type. To determine the numbers $n_i$ we have the following lemma from \cite{Casselman}.

\begin{lem}There are isomorphisms of abelian groups
\begin{align*}
\frac{\ker (u-1)}{\im(u+1)} \cong (\mathbb{Z}/2\mathbb{Z})^{n_1} \quad , \quad
 \frac{\ker (u+1)}{\im(u-1)} \cong (\mathbb{Z}/2\mathbb{Z})^{n_2}
\end{align*}
where the first of these can be identified with the component group $\pi_0(\mathbb{T}_u(\mathbb{R}))$.
\end{lem}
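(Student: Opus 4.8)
The plan is to work with the normal form (\ref{latdecom}) for $u$ acting on $P$ and compute the two quotient groups directly in each of the three block types, then assemble the result by additivity. Write $P = P_{1,u} \oplus P_{2,u} \oplus P_{3,u}$ for the $u$-invariant decomposition, where $u$ acts as $+1$ on $P_{1,u}$ (rank $n_1$), as $-1$ on $P_{2,u}$ (rank $n_2$), and as the swap $\left(\begin{smallmatrix} 0 & 1 \\ 1 & 0 \end{smallmatrix}\right)$ on each of the $n_3$ rank-$2$ summands of $P_{3,u}$. Both functors $M \mapsto \ker(u-1)/\im(u+1)$ and $M \mapsto \ker(u+1)/\im(u-1)$ respect finite direct sums of $u$-modules, so it suffices to evaluate them on the three elementary pieces. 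On a rank-$1$ trivial piece ($u=+1$): $\ker(u-1)=\mathbb{Z}$, $\im(u+1)=2\mathbb{Z}$, giving $\mathbb{Z}/2\mathbb{Z}$ for the first quotient; while $\ker(u+1)=0$ and $\im(u-1)=0$, giving $0$ for the second. On a rank-$1$ sign piece ($u=-1$) the roles are exactly reversed. On a rank-$2$ swap piece, $\ker(u-1)$ is the diagonal $\mathbb{Z}(e+f)$ and $\im(u+1)$ is also $\mathbb{Z}(e+f)$, so the first quotient vanishes; likewise $\ker(u+1)=\mathbb{Z}(e-f)=\im(u-1)$, so the second quotient vanishes. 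Summing over the blocks yields $\ker(u-1)/\im(u+1)\cong(\mathbb{Z}/2\mathbb{Z})^{n_1}$ and $\ker(u+1)/\im(u-1)\cong(\mathbb{Z}/2\mathbb{Z})^{n_2}$.

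For the identification with $\pi_0(\mathbb{T}_u(\mathbb{R}))$, I would use the exponential sequence (\ref{exseq}) in the form $0 \to P \to V_{\mathbb{C}} \xrightarrow{\exp} \mathbb{T} \to 1$, now made $u$-equivariant, where $u$ acts on $V_{\mathbb{C}}=\mathbb{C}\otimes P$ $\mathbb{C}$-linearly and complex conjugation acts in the obvious way. A point $t\in\mathbb{T}$ lies in $\mathbb{T}_u(\mathbb{R})$ iff $u\cdot t=\bar t$; lifting $t=\exp(z)$ with $z=x+iy\in V_{\mathbb{C}}$, this says $u\cdot z \equiv \bar z \pmod P$, i.e. $u(x)-x \in P$ and $u(y)+y\in P$. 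Using the normal basis one sees $\mathbb{T}_u(\mathbb{R})$ decomposes as the product (\ref{realiso}), and the component group of $(\mathbb{R}^\ast)^{n_1}\times(S^1)^{n_2}\times(\mathbb{C}^\ast)^{n_3}$ is $(\mathbb{Z}/2\mathbb{Z})^{n_1}$, coming precisely from the split factors. Concretely, the connected component of the identity is $\exp$ of the $+1$-eigenspace of $u$ on $V_{\mathbb{R}}$ together with $i$ times the $-1$-eigenspace; the discrete quotient measuring which component one lands in is the cokernel of $(u+1)$ acting on the $+1$-eigenlattice, read modulo $2$, which is exactly $\ker(u-1)/\im(u+1)$.

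I expect the main obstacle to be the $\pi_0$ identification rather than the purely algebraic block computation: one must be careful that the normal basis of (\ref{latdecom}) is compatible with the real structure so that the isomorphism (\ref{realiso}) is genuinely a real-analytic isomorphism and not merely an abstract group isomorphism, and one must check that the map sending a point of $\mathbb{T}_u(\mathbb{R})$ to its class in $\ker(u-1)/\im(u+1)$ is well-defined independent of the lift $z$ and surjective with connected kernel. The cleanest way to handle this is to observe that $\mathbb{T}_u(\mathbb{R})$ is a real Lie group whose identity component is the image under $\exp$ of the real subspace $\{x+iy : u(x)=x,\ u(y)=-y\}$ of $V_{\mathbb{C}}$, so $\pi_0(\mathbb{T}_u(\mathbb{R}))$ is the quotient of the full lattice of lifts of points of $\mathbb{T}_u(\mathbb{R})$ by the lattice of lifts of the identity component; unwinding this quotient with the explicit description $u(x)-x\in P$, $u(y)+y\in P$ gives precisely $\ker(u-1)/\im(u+1)$. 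Everything then reduces to the three elementary block checks above.
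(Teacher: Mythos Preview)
Your proposal is correct and follows essentially the same approach as the paper: both use the normal form (\ref{latdecom}) to decompose $P$ into the three block types and compute $\ker(u\pm 1)$ and $\im(u\pm 1)$ blockwise, obtaining $P_{1,u}/2P_{1,u}\cong(\mathbb{Z}/2\mathbb{Z})^{n_1}$ (and dually for $n_2$). Your treatment of the $\pi_0$ identification is in fact more detailed than the paper's, which simply relies on the earlier isomorphism (\ref{realiso}) and reads off the component group from the split factors.
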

\begin{proof}We construct the first of these isomorphisms. After choosing a normal basis for $P$ we can use the lattice decomposition (\ref{latdecom}) to see that the lattice $\ker(u-1)$ is isomorphic to $P_{1,u}\oplus P_{3,u}$. Similarly the lattice $\im(u+1)$ is isomorphic to $2P_{1,u}\oplus P_{3,u}$ and the quotient of these lattices is $P_{1,u}/2P_{1,u}$. \end{proof}

To determine the number of connected components of $C_W(u)\backslash \mathbb{T}_u(\mathbb{R})$ we need to compute the number of orbits under the action of $C_W(u)$ on the connected components of $\mathbb{T}_u(\mathbb{R})$. The following lemma shows that in fact only $W_u^+$ acts non-trivially on the components. 
\begin{lem}
The group $W_u^- \rtimes G_u$ is contained in the kernel of the action of $C_W(u)$ on $\ker(u-1)/\im(u+1)$. 
\end{lem}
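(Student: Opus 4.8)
The claim is that $W_u^- \rtimes G_u$ acts trivially on the component group $\pi_0(\mathbb{T}_u(\mathbb{R})) \cong \ker(u-1)/\im(u+1)$. The plan is to analyze the two factors separately, using the explicit description of these groups as groups generated by reflections (for $W_u^- = W_u^{-} = W(R_u^-)$) and by pairs of commuting reflections $s_\alpha s_{u\cdot\alpha}$ with $\alpha \in R_1$ (for $G_u$). Since $\pi_0(\mathbb{T}_u(\mathbb{R}))$ is computed by the action on the lattice $\ker(u-1) \subseteq P$ modulo $\im(u+1)$, it suffices to show each generator acts as the identity on this quotient.

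First I would treat $W_u^-$. Its roots $\alpha$ lie in $V_u^- = $ the $(-1)$-eigenspace of $u$, so in particular $\alpha \perp V_u^+$ and moreover $\alpha \in \im(u+1)^{\perp}$ fails to be the right framing — instead the key point is: for a root $\alpha$ with $u\alpha = -\alpha$ and any $x \in \ker(u-1)$, one has $(\alpha, x)\alpha \in \im(u+1)$ because $(u-1)$ applied to something... more precisely $s_\alpha(x) = x - (\alpha,x)\alpha$, and I claim $(\alpha,x)\alpha \in \im(u+1)$: indeed for $\alpha \in V_u^-$ we have $(u+1)\alpha = 0$, which is the wrong direction, so instead note $\alpha \perp \ker(u-1)$ forces $(\alpha,x) = 0$ outright (since $V_u^-$ and $V_u^+$ are orthogonal and $\ker(u-1) = P \cap V_u^+$... wait, $\ker(u-1) = P_{1,u}\oplus P_{3,u}$, not contained in $V_u^+$). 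The correct argument: decompose $x \in \ker(u-1)$; on the $P_{3,u}$ part a root of $R_u^-$ pairs possibly nontrivially, but then $(\alpha,x)\alpha$ — I would show it lands in $\im(u+1)$ by writing $2\beta = (u+1)\gamma$ for suitable $\gamma$ using that $\alpha \in \im(u+1)$... Actually $\alpha \in V_u^-$ gives $\alpha = \tfrac12(1-u)\alpha \in \im(1-u) = \im(u-1)$, not $\im(u+1)$. The cleanest route: show $s_\alpha$ fixes $\ker(u-1)/\im(u+1)$ directly by checking $s_\alpha$ commutes with $u$, preserves both $\ker(u-1)$ and $\im(u+1)$, and $s_\alpha - 1$ has image in $\mathbb{Z}\alpha \subseteq \im(u+1)$ — the last because $u\alpha = -\alpha$ implies $\alpha = \tfrac12((\alpha) - u(\alpha)) $ hmm; rather $(u+1)(-\tfrac12\alpha \cdot \text{something})$. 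I would verify $\alpha \in \im(u+1)$ by noting $R_u^- \subseteq V_u^-$ and on $V_u^-$, $u+1$ acts as... zero. So this forces me to use instead $\im(u-1)$. This sign bookkeeping is exactly where I expect the main obstacle to lie: I must match the lattice $\im(u+1)$ correctly against $V_u^-$ versus $V_u^+$, recalling from the normal form that $\im(u+1) \cong 2P_{1,u}\oplus P_{3,u}$ sits inside $V_u^+ \oplus V_u^{\text{mixed}}$, and show each reflection generator moves $\ker(u-1)$ by an element of this sublattice.

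For $G_u = \{(w, uwu) : w \in W(R_1)\}$, generated by $g_\alpha := s_\alpha s_{u\alpha}$ with $\alpha \in R_1 \subseteq R_u^c$, I would compute $g_\alpha(x) - x$ for $x \in \ker(u-1)$. Since $\alpha \in R_u^c$ means $(\alpha, \rho_\pm) = 0$, and $s_\alpha, s_{u\alpha}$ commute, one gets $g_\alpha(x) = x - (\alpha,x)\alpha - (u\alpha, x)(u\alpha)$. For $x$ fixed by $u$, $(u\alpha, x) = (\alpha, ux) = (\alpha, x)$, so $g_\alpha(x) - x = -(\alpha, x)(\alpha + u\alpha) = -(\alpha,x)(u+1)\alpha \in \im(u+1)$. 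This is the clean case and settles the $G_u$ part immediately. I would then combine: $W_u^- \rtimes G_u$ is generated by these two families of elements, each acting trivially on the quotient, hence the whole group does. Finally I would remark that this is consistent with the earlier theorem $C_W(u) \cong W_u^- \rtimes G_u^+$ with $W_u^+ \lhd G_u^+$, so that only the quotient $G_u^+/W_u^+$ (a group of diagram automorphisms) can act nontrivially on $\pi_0(\mathbb{T}_u(\mathbb{R}))$ — which is the point of isolating $W_u^-\rtimes G_u$ in the lemma. The main work, as noted, is the careful eigenspace/lattice sign analysis for the $W_u^-$ factor; the $G_u$ factor is a one-line computation.
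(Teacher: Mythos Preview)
Your treatment of $G_u$ is correct and is exactly the paper's argument: for $x\in\ker(u-1)$ one has $(u\alpha,x)=(\alpha,ux)=(\alpha,x)$, so
\[
s_\alpha s_{u\alpha}(x)-x=-(\alpha,x)\bigl(\alpha+u\alpha\bigr)=-(\alpha,x)(u+1)\alpha\in\im(u+1),
\]
using that $(\alpha,x)\alpha\in P$.

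The gap is in the $W_u^-$ part, and it is the opposite of what you think. You had the right idea and then talked yourself out of it. The inclusion $\ker(u-1)\subseteq V_u^+$ \emph{does} hold: if $x\in P$ satisfies $ux=x$, then by definition $x$ lies in the $+1$-eigenspace $V_u^+$. Your worry that ``$\ker(u-1)=P_{1,u}\oplus P_{3,u}$ is not contained in $V_u^+$'' is a misreading of the normal form. On the third block $u$ acts by swapping coordinates, so the $u$-fixed vectors there are the diagonal $\{(a,a)\}$, and these are precisely the $+1$-eigenvectors of that block; they lie in $V_u^+$. (What is true is that $P_{3,u}$ as a whole is not in $V_u^+$, but $\ker(u-1)\cap P_{3,u}$ is.)

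Once you accept $x\in V_u^+$, the $W_u^-$ case is immediate and in fact stronger than needed: for $\alpha\in R_u^-\subseteq V_u^-$ and $x\in V_u^+$ one has $(\alpha,x)=0$, so $s_\alpha(x)=x$. Thus $W_u^-$ fixes $\ker(u-1)$ pointwise, not merely modulo $\im(u+1)$. This is exactly the paper's argument; there is no ``careful eigenspace/lattice sign analysis'' to do. So your overall strategy is the paper's, but you misidentified which factor carries the work: $G_u$ is the only part requiring the $\im(u+1)$ quotient, and you already handled it.
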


\begin{proof}
Suppose $x \in \ker (u-1)$. In particular $x \in V_u^+$ so that $w\cdot x=x$ for all $w\in W_u^-$. The group $G_u$ is generated by products of commuting reflections $s_\alpha s_{u\cdot \alpha}$ where $\alpha \in R_1$. Such an element acts trivially on the class of $x$ in $\ker(u-1)/\im(u+1)$ since
\begin{align*}
s_\alpha s_{u\cdot \alpha} \cdot x-x &= (u+1)\left((\alpha,x)\alpha \right) \in \im(u+1).
\end{align*}
\end{proof}

\subsection{Connected components of $W\backslash \mathbb{T}_1$}

For the trivial involution $u=1$ and the real torus $\mathbb{T}_1$ we have
\begin{align*}
\mathbb{T}_1(\mathbb{R}) \cong (\mathbb{R}^\ast)^n \quad , \quad \frac{\ker u-1}{\im u+1} \cong \frac{P}{2P} \quad , \quad C_W(u)=W.
\end{align*} 
The decomposition of this real torus into connected components is described by
\[ \mathbb{T}_1(\mathbb{R}) = \bigsqcup_{[\varpi] \in P/2P}\mathbb{T}_1^\varpi \quad  \text{where} \quad \mathbb{T}_1^\varpi = \exp \left( \frac{1}{2}\varpi + iV \right). \] 
We can use the basis of fundamental weights $\{\varpi_1,\ldots,\varpi_n\}$ of $P$ to identify $\mathbb{T}$ with $(\mathbb{C}^\ast)^n$ through the isomorphism
\begin{align*}
\mathbb{C}^\ast \otimes P &\rightarrow (\mathbb{C}^\ast)^n \\
\sum_{i=1}^n t_i \otimes \varpi_i &\mapsto (t_1,\ldots,t_n).
\end{align*}
In this way we can also identify the component group $P/2P$ of $\mathbb{T}_1$ with the subgroup $\{-1,1\}^n \subset (\mathbb{R}^\ast)^n$. 
\begin{rmk}An element of $P/2P$ can be represented by a colouring of the Coxeter diagram of $W$ where the $i$th node is coloured white if the corresponding coefficient of $\varpi_i$ is $1$ and coloured black if it is $-1$. To determine the action of $W$ on two-coloured Coxeter diagrams first observe that a simple reflection for $W$ acts on the fundamental weights as    
\begin{equation}\label{weightcalc}
s_i \cdot \varpi_j=
\begin{cases}
\varpi_i &  i\neq j \\
-\varpi_i+\sum_{k\in I_j}\varpi_j & i=j
\end{cases}
\end{equation} 
where the sum runs over the set $I_j$ of neighbouring vertices of the $j$th vertex of the Coxeter diagram. Now the generator $s_i$ only acts nontrivially if the $i$th node $v_i$ is black. In this case the action of $s_i$ changes the colour of all neighbouring vertices of $v_i$ but leaves $v_i$ unchanged. 
\end{rmk}

It is often convenient to use the group $\frac{1}{2}P/P$ for representing the connected components of $\mathbb{T}_u(\mathbb{R})$ instead of the group $P/2P$. The reason for this is that there are bijections of orbit spaces
\begin{align}\label{orbitspaceformula}
W\Big\backslash \left(\frac{1}{2}P/P\right) \cong \left(P\rtimes W\right) \Big\backslash \frac{1}{2}P \cong \left(P/Q\right)\Big\backslash \left(\frac{1}{2}P \cap \bar{\mathcal{A}}\right).
\end{align} 
We can count the points in the intersection $\frac{1}{2}P\cap \bar{\mathcal{A}}$ and the group $P/Q$ is typically small. Its action is easily determined for Weyl groups of type $ADE$. We do this for root systems of type $A_n$ in Example \ref{AN}.

\begin{ex}[$A_n$]\label{AN}We will use the above method to describe the orbit space $W\backslash \left( P/2P\right)$ for type $A_n$. This will be used frequently in the next section. Representatives for the orbits are given by
\[
\begin{cases}
\{0,\varpi_1,\ldots,\varpi_{n/2}\} & $n$ \ \text{even}\\
\{0,\varpi_1,\ldots,\varpi_{(n+1)/2}\} & $n$ \ \text{odd}.
\end{cases}
\] 
\end{ex}
\begin{proof}
For a root system of type $A_n$ all the roots have coefficient $1$ in the highest root so the fundamental alcove is the convex hull of the fundamental weights. From this we determine
\[ \bar{\mathcal{A}}\cap \frac{1}{2}P = \left\{ 0,\frac{\varpi_i}{2}, \frac{\varpi_i+\varpi_j}{2} \right\}_{1\leq i\neq j\leq n}.\] 
The group $P/Q$ is cyclic of order $n+1$ and is generated by $\gamma_1$ which acts as the permutation $(01\ldots n)$ on the indices of the fundamental weights $\{\varpi_i\}$. A small calculation shows that
\[\gamma_1 \left( \frac{\varpi_i}{2} \right) = \frac{\varpi_{i+1}+\varpi_1}{2}  \]
where we use the notation $\varpi_0=0$ and the indices are considered $\text{mod} \ n+1$. 
A typical $\gamma_1$-orbit (for which $n+1\neq 2i$) is of the form
\begin{align*}
\frac{\varpi_i}{2} &\mapsto \frac{\varpi_{i+1}+\varpi_1}{2} \mapsto \ldots \mapsto\frac{\varpi_n+\varpi_{n-i}}{2} \mapsto \frac{\varpi_{n-i+1}}{2} \\
&\mapsto \frac{\varpi_{n-i+2}+\varpi_{1}}{2} \mapsto \ldots \mapsto \frac{\varpi_{n}+\varpi_{i-1}}{2} \mapsto \frac{\varpi_i}{2}.
\end{align*} 
A representative is given by $\varpi_i/2$. If $n$ is even then all orbits are of this form and there are $n/2$ orbits. If $n$ is odd then there is one additional orbit with $n+1=2i$ given by
\[
\frac{\varpi_{(n+1)/2}}{2} \mapsto \frac{\varpi_{i+1}+\varpi_1}{2} \mapsto \ldots \mapsto\frac{\varpi_n+\varpi_{n-i}}{2} \mapsto \frac{\varpi_{(n+1)/2}}{2}.
\]
A representative is given by $\varpi_{(n+1)/2}$.
\end{proof}

For non-trivial involutions $u$ it is more complicated to determine the connected components of 
\[C_W(u)\backslash \mathbb{T}_u(\mathbb{R}).\]
After choosing a normal basis  for $P$ in which $u$ takes a normal form we can identify the component group $\pi_0(\mathbb{T}_u(\mathbb{R}))$ with $P_{1,u}/2P_{1,u}$. However since there is no canonical choice for $P_{1,u}$ we have to compute
\[ W_u^+\backslash \left( P_{1,u}/2P_{1,u} \right)\]
case by case. The real torus $\mathbb{T}_u(\mathbb{R})$ can be written as the disjoint union of its connected components in the following way
\begin{equation}\label{components}
\mathbb{T}_u(\mathbb{R}) =  \bigsqcup_{[\varpi] \in \frac{1}{2}{P_{1,u}}/{P_{1,u}}} \mathbb{T}_u^\varpi \quad \text{where} \quad \mathbb{T}_u^\varpi = \exp \left( \frac{1}{2}\varpi +iV_u^+ + V_u^- \right).
\end{equation}

\subsection{Connected components of real tori of type $E_7$}
In this section we determine all connected components of the space 
\[ C_W(u)\big\backslash \mathbb{T}_u(\mathbb{R})\]
where $u\in W$ is an involution in the Weyl group $W$ of type $E_7$ and $\mathbb{T}_u$ is the corresponding real torus. The results are listed in Table \ref{tablecom}. In the first column are the pairs of conjugacy classes of involutions $W$ we determined in Example \ref{invE7}. The total number of connected components equals $20$. Recall that the nodes of the $E_7$ diagram are numbered as in the diagram below. 
\begin{figure}[H]
\centering
\begin{tikzpicture}
  \tikzstyle{every node}=[circle,draw]  
    \node[label=60:$1$] (1) at ( 0,0) {};
    \node[label=60:$2$] (2) at ( 1,0) {};    
    \node[label=60:$3$] (3) at ( 2,0) {};
    \node[label=60:$4$] (4) at ( 3,0) {};
    \node[label=60:$7$] (7) at ( 2,1) {};
    \node[label=60:$5$] (5) at ( 4,0) {};
    \node[label=60:$6$] (6) at ( 5,0) {}; 
    \draw [-] (1) -- (2) -- (3) -- (4) -- (5) -- (6);
    \draw [-] (3) -- (7);   
\end{tikzpicture}
\caption{The labelling of the nodes of the Coxeter graph of type $E_7$.}
\end{figure}
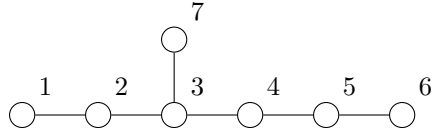

If $u$ is an involution with $n_1=0$ then the real torus $\mathbb{T}_u(\mathbb{R})$ is connected. Consequently the quotient $C_W(u)\backslash \mathbb{T}_u(\mathbb{R})$ is also connected. Now suppose that $n_1=1$ so that $P_{1,u}/2P_{1,u}\cong \mathbb{Z}/2\mathbb{Z}$. Since $\{0\}$ is a single $W_u^+$-orbit, so is $\{\varpi \}$ with $\varpi \in P$ a generator for $P_{1,u}/2P_{1,u}$ and there are two connected components. From Table \ref{tablecom} we see that this is the case for $A_1^3$ and $D_4$ which we represent by $I=\{s_2,s_4,s_7\}$ and $I=\{s_2,s_3,s_4,s_7\}$ respectively. In both cases the fundamental weight $\varpi_6$ is a generator for $P_{1,u}/2P_{1,u}$. For $n_1>2$ the situation becomes more complicated and we have to determine the action of $W_u^+$ on the generators of $P_{1,u}/2P_{1,u}$. These cases are $u=1,A_1,A_1^2$ or $A_1^{3\prime}$ and we treat them below. 

\begin{table}
\begin{equation*}
\begin{array}{cccccl}
\toprule
u\in W & n_1 & n_2 & n_3 & \# \text{components} & \text{representatives} \\
\midrule
1 & 7 & 0 & 0 & 4 & \{0,\varpi_5,\varpi_6,\varpi_7\} \\
E_7 & 0 & 7 & 0 & 1 & \{ 0 \} \\
A_1 & 5 & 0 & 1 & 3 & \{0,\varpi_3,\varpi_4\} \\
D_6 & 0 & 5 & 1 & 1 & \{ 0 \} \\
A_1^2 & 3 & 0 & 2 & 3 & \{0,\varpi_4,\varpi_5\} \\
D_4A_1 & 0 & 3 & 2 & 1 & \{ 0 \} \\
A_1^3 & 1 & 0 & 3 & 2 & \{0,\varpi_6\} \\
A_1^4 & 0 & 1 & 3 & 1 & \{ 0 \} \\
D_4 & 1 & 2 & 2 & 2 & \{ 0,\varpi_6\} \\
A_1^{3\prime} & 2 & 1 & 2 & 2 & \{0,\varpi_1\} \\
\bottomrule
\end{array}
\end{equation*}
\caption{The number of components of $C_W(u)\backslash \mathbb{T}_u(\mathbb{R})$ for all conjugacy classes of involutions $u\in W$. We also list the  corresponding representatives in $W_u^+\backslash \left( P_{1,u}/2P_{1,u} \right)$.}
\label{tablecom}
\end{table}

\begin{enumerate}
\item[$1 \quad$] The involution $u=1$ is of type $(7,0,0)$ so that we can use Formula \ref{orbitspaceformula}. The closure of the fundamental alcove $\overline{\mathcal{A}}$ intersected with lattice of half weights 
\begin{align*}
\bar{\mathcal{A}} \cap \frac{1}{2}P & =\Conv \left(0,\frac{\varpi_1}{2},\frac{\varpi_2}{3},\frac{\varpi_3}{4},\frac{\varpi_4}{3},\frac{\varpi_5}{2},\varpi_6, \frac{\varpi_7}{2} \right) \cap \frac{1}{2}P
\end{align*}
consists of the six elements $\{0,\varpi_1/2,\varpi_5/2,\varpi_6,\varpi_6/2,\varpi_7/2 \}$. The group $P/Q$ is of order two and acts on this set by $\gamma_6$ which interchanges $0\leftrightarrow \varpi_6$ and $\varpi_1/2 \leftrightarrow \varpi_5/2$. We conclude that there are four orbits in $W\backslash \left(P/2P\right)$ represented by $\{0,\varpi_5,\varpi_6,\varpi_7\}$.

\item[$A_1$ \quad] The involution $A_1$ is of type $(5,0,1)$. As a representative we pick $I=\{s_1\}$. Let $S_u$ be the matrix of $u$ with respect to the basis of fundamental weights for $P$ and let $B_u$ be a matrix whose columns represent a normal basis in the sense of Equation \ref{latdecom}. The normal basis for $P$ is not uniquely determined but we fix the choice below.
\begin{displaymath}
S_u = \begin{pmatrix}
-1 & 0 \\
1 & 1
\end{pmatrix}
\oplus I_5 \quad , \quad 
B_u = \begin{pmatrix}
1 & -1 \\
0 & 1
\end{pmatrix} \oplus I_5.
\end{displaymath}
A basis for $P_{1,u}$ and system of simple roots for $W_u^+$ are then given by respectively
\[  P_{1,u}= \mathbb{Z}\{ \varpi_7,\varpi_3,\varpi_4,\varpi_5,\varpi_6\} \ , \ \Delta(D_6)=\{\alpha_7,\alpha_3,\alpha_4,\alpha_5,\alpha_6,\alpha_I \} \] 
where $\alpha_I = e_0-e_3-e_4-e_5$. The lattice $P_{1,u}$ is a weight lattice of type $A_5$ and the group $W_u^+$ which acts on $P_{1,u}$ is of type $D_6$. All this is shown in the picture below. The black nodes represent the set $I$ of $W_u^-$, the crossed nodes the root system of $W_u^+$ and the grey nodes the fundamental weights of $P_{1,u}$. 
\begin{figure}[h]
\centering
\begin{tikzpicture}

	\node[circ,fill=black] (1) at (0,0) {};
	\node[circ] (2) at (1,0) {};
	\node[greycirc] (3) at (2,0) {};
	\node[greycirc] (4) at (3,0) {};
	\node[greycirc] (5) at (4,0) {};
	\node[greycirc] (6) at (5,0) {};
	\node[greycirc] (7) at (2,1) {};
	
	\node[cross,label=60:$\alpha_7$] (7a) at (2,1) {};
	\node[cross,label=60:$\alpha_3$] (3a) at (2,0) {};
	\node[cross,label=60:$\alpha_4$] (4a) at (3,0) {};
	\node[cross,label=60:$\alpha_5$] (5a) at (4,0) {};
	\node[cross,label=60:$\alpha_6$] (6a) at (5,0) {};
	\node[cross,label=60:$\alpha_I$] (b1) at (4,1) {};
	
	\draw [-] (1) -- (2) -- (3) -- (4) -- (5) -- (6);
	\draw [-] (3) -- (7);
	\draw [-] (5) -- (b1);
\end{tikzpicture} 
\caption{Constuction of the simple roots for $W_u^+$ with $u$ of type $A_1$}
\label{A1graph}
\end{figure}
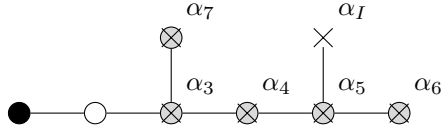

We need to determine the action of $W_u^+$ on $P_{1,u}$. First we observe that the parabolic subgroup $W(A_5)$ of $W_u^+$ generated by the reflections represented by grey nodes in the diagram is of type $A_5$ and acts on $P_{1,u}$ in the usual way. We see from example \ref{AN} that there are four orbits for $W(A_5)\backslash \left(P_{1,u}/2P_{1,u}\right)$  represented by $\{0,\varpi_7,\varpi_3,\varpi_4\}$. The remaining generating reflection $s_I$ acts on the basis for $P_{1,u}$ as
\[ s_{\alpha_I}(\varpi_7,\varpi_3,\varpi_4,\varpi_5,\varpi_6)=(\varpi_7+\varpi_5,\varpi_3+2\varpi_5,\varpi_4+\varpi_5,\varpi_5,\varpi_6 ).\]
A small calculation using Equation \ref{weightcalc} shows that
\begin{align*}
s_4s_3s_7s_I (\varpi_7) &= s_4s_3s_7(\varpi_7+\varpi_5)  \\
&= s_4s_3(\varpi_7+\varpi_3+\varpi_5)  \\
&= s_4(\varpi_3+\varpi_4+\varpi_5) \\
&= \varpi_4
\end{align*}     
so that the reflection $s_I$ exchanges the orbits $\varpi_7 \leftrightarrow \varpi_4$. This leaves three orbits for $W(D_6)\backslash \left( P_{1,u}/2P_{1,u} \right)$ represented by $\{0,\varpi_3,\varpi_4\}$. 

\item[$A_1^2$ \quad]The involution $A_1^2$ is of type $(3,0,2)$. As a representative we choose $I=\{s_1,s_6\}$. As a basis for the lattice $P_{1,u}$ we can choose $\mathbb{Z}\{\varpi_3,\varpi_4,\varpi_7\}$ which is of type $A_3$. The group $W_u^+$ is of type $D_4A_1$ with simple system 
\begin{align*}
\Delta(D_4A_1) = \{ \alpha_3,\alpha_4,\alpha_7,\alpha_I,\alpha_{II}
 \}
\end{align*} 
where $\alpha_I=e_0-e_3-e_6-e_7$ and $\alpha_{II}=-e_0+e_3+e_4+e_5$. The corresponding diagram is given below.
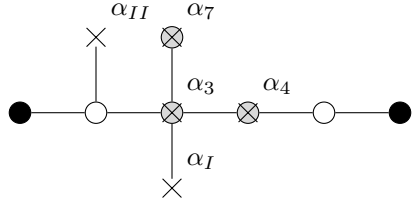
\begin{figure}[h]
  \centering
  \begin{tikzpicture}

	\node[circ,fill=black] (1) at (0,0) {};
	\node[circ] (2) at (1,0) {};
	\node[greycirc] (3) at (2,0) {};
	\node[greycirc] (4) at (3,0) {};
	\node[circ] (5) at (4,0) {};
	\node[circ,fill=black] (6) at (5,0) {};
	\node[greycirc] (7) at (2,1) {};

	\node[cross,label=60:$\alpha_3$] (3a) at (2,0) {};
	\node[cross,label=60:$\alpha_4$] (4a) at (3,0) {};
	\node[cross,label=60:$\alpha_7$] (7a) at (2,1) {};
	\node[cross,label=60:$\alpha_I$] (b1) at (2,-1) {};
	\node[cross,label=60:$\alpha_{II}$] (b2) at (1,1) {};
	
	\draw [-] (1) -- (2) -- (3) -- (4) -- (5) -- (6);
	\draw [-] (3) -- (7);
	\draw [-] (3) -- (b1);
	\draw [-] (2) -- (b2);
\end{tikzpicture} 
\caption{Constuction of the simple roots for $W_u^+$ with $u$ of type $A_1^2$}
\label{A2graph}
\end{figure}

The parabolic subgroup $W(A_3)$ of $W_u^+$ of type $A_3$ generated by the reflection represented by the grey nodes of the diagram acts on $P_{1,u}$ in the usual way. We can represent the orbits of $W(A_3)\backslash \left( P_{1,u}/2P_{1,u}\right)$ by $\{0,\varpi_3,\varpi_4\}$. The reflection $s_{\alpha_{II}}$ acts trivially on these orbits. The reflection $s_{\alpha_I}$ acts as
\[ s_{\alpha_I}(\varpi_7,\varpi_3,\varpi_4)=(\varpi_7+\varpi_3,\varpi_3,\varpi_4).\]
This action is identical to that of $s_7\in W(A_3)$ so the number of orbits of $W(D_4A_1)\backslash \left( P_{1,u}/2P_{1,u} \right)$ remains three with representatives $\{0,\varpi_3,\varpi_4\}$.  

\item[$A_1^{3\prime} \quad$]The involution $A_1^{3\prime}$ is of type $(2,1,2)$ and is represented by the elements $I=\{s_4,s_6,s_7\}$. The group $W_u^+$ is of type $D_4$ with simple system
\[ \Delta(D_4) = \{ \alpha_1,\alpha_2,\alpha_I,\alpha_{II} \} \]
where $\alpha_I=e_0-e_1-e_4-e_5$ and $\alpha_{II}=e_0-e_1-e_6-e_7$. As a basis for $P_{1,u}$ we can choose $\mathbb{Z}\{\varpi_1,\varpi_2 \}$. The diagram is shown below. 
\begin{figure}[h]
  \centering
  \begin{tikzpicture}

	\node[greycirc] (1) at (0,0) {};
	\node[greycirc] (2) at (1,0) {};
	\node[circ] (3) at (2,0) {};
	\node[circ,fill=black] (4) at (3,0) {};
	\node[circ] (5) at (4,0) {};
	\node[circ,fill=black] (6) at (5,0) {};
	\node[circ,fill=black] (7) at (2,1) {};
	
	\node[cross,label=60:$\alpha_{I}$] (b1) at (0,1) {};
	\node[cross,label=60:$\alpha_{II}$] (b2) at (-1,0) {};
	\node[cross,label=60:$\alpha_2$] (2a) at (1,0) {};
	\node[cross,label=60:$\alpha_1$] (1a) at (0,0) {};
	
	\draw [-] (1) -- (2) -- (3) -- (4) -- (5) -- (6);
	\draw [-] (3) -- (7);
	\draw [-] (1) -- (b1);
	\draw [-] (1) -- (b2);
\end{tikzpicture} 
\caption{Constuction of the simple roots for $W_u^+$ with $u$ of type $A_1^3$}
\label{A3graph}
\end{figure}
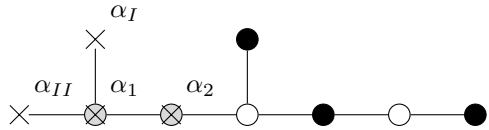

The parabolic subgroup $W(A_2)$ of $W_u^+$ generated by the reflections represented by the grey nodes in the diagram acts on $P_{1,u}$ in the usual way. The space  $W(A_2)\backslash \left( P_{1,u}/2P_{1,u} \right)$ consists of a single orbit represented by the element $\{\varpi_1 \}$. The reflections $s_{\alpha_I}$ and $s_{\alpha_{II}}$ both act as $s_2$ on $P_{1,u}$. So there are two $W_u^+$ orbits in $W(D_4)\backslash \left( P_{1,u}/2P_{1,u} \right)$ represented by the elements $\{0,\varpi_1\}$.

\end{enumerate}

\subsection{The complement of the mirrors}

In this section we prove that for a root system of type $ADE$ satisfying certain assumptions the connected components of the space $(W\backslash \mathbb{T}^\circ)(\mathbb{R})$ are of the form
\begin{equation}\label{concom}
\Stab_W(\mathbb{T}_u^\varpi) \backslash (\mathbb{T}_u^{\varpi} )^\circ \quad \text{where} \quad [\varpi]\in P_{1,u}/2P_{1,u}.
\end{equation}
This implies that removing the mirrors from $\mathbb{T}_u(\mathbb{R})$ does not add new components to the quotient $C_W(u)\backslash \mathbb{T}_u(\mathbb{R})$. In particular the number of connected components of $C_W(u)\backslash \mathbb{T}^\circ_u(\mathbb{R})$ for  involutions $u$ in $W(E_7)$ are the same as the numbers in Table \ref{tablecom}. 

\begin{dfn}
Let $q:\mathbb{T} \rightarrow W\backslash \mathbb{T}$ be the quotient map. The discriminant $D_{\mathbb{T}}$ is the set of critical values of $q$. It consists of union of the $q$-images of the toric mirrors and the $q$-image of the set
\[  \mathbb{T}_{P/Q} = \bigcup_{i} \exp(V_{\mathbb{C}}^{\gamma_i})  \] 
where we denote by $V_\mathbb{C}^{\gamma_i}$ the fixed points in $V_\mathbb{C}$ of the generator $\gamma_i$ for $P/Q$. 
\end{dfn}
\begin{lem}
The $q$-images of the real tori $\mathbb{T}_u(\mathbb{R})$ are disjoint in 
\[(W\backslash \mathbb{T})(\mathbb{R}) - D_{\mathbb{T}}(\mathbb{R}).\]
\end{lem}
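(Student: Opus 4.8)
The plan is to show that whenever the $q$-images of two real tori $\mathbb{T}_u(\mathbb{R})$ and $\mathbb{T}_{u'}(\mathbb{R})$ meet outside $D_{\mathbb{T}}(\mathbb{R})$, the involutions $u$ and $u'$ are conjugate in $W$. Since $w\cdot\mathbb{T}_u=\mathbb{T}_{wuw^{-1}}$ gives $q(\mathbb{T}_u(\mathbb{R}))=q(\mathbb{T}_{wuw^{-1}}(\mathbb{R}))$, conjugate involutions have equal $q$-images, so this will prove that distinct conjugacy classes give $q$-images meeting only inside $D_{\mathbb{T}}(\mathbb{R})$; together with $q^{-1}(W\backslash\mathbb{T})(\mathbb{R})=\bigcup_{u^2=1}\mathbb{T}_u(\mathbb{R})$ from Proposition \ref{toricTits}, this yields the disjointness asserted in the lemma.

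First I would pick $x\in q(\mathbb{T}_u(\mathbb{R}))\cap q(\mathbb{T}_{u'}(\mathbb{R}))$ with $x\notin D_{\mathbb{T}}(\mathbb{R})$, and choose lifts $t\in\mathbb{T}_u(\mathbb{R})$ and $t'\in\mathbb{T}_{u'}(\mathbb{R})$ with $q(t)=q(t')=x$, so that $t'=w\cdot t$ for some $w\in W$. Next I would record that complex conjugation on $\mathbb{T}=\Hom(Q,\mathbb{C}^\ast)$ commutes with the $W$-action, because $W$ acts $\mathbb{Z}$-linearly on $Q$; hence $\overline{w\cdot t}=w\cdot\bar t$. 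Combining the defining relations $u\cdot t=\bar t$ and $u'\cdot t'=\bar t'$ then gives $(u'w)\cdot t=\overline{w\cdot t}=w\cdot\bar t=(wu)\cdot t$, so that $(u^{-1}w^{-1}u'w)\cdot t=t$, i.e. $u^{-1}w^{-1}u'w\in\Stab_W(t)$.

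The key step is to show $\Stab_W(t)=\{1\}$. Since $t\in\mathbb{T}_u(\mathbb{R})$ we have $x=q(t)\in(W\backslash\mathbb{T})(\mathbb{R})$, so $x\notin D_{\mathbb{T}}(\mathbb{R})$ is the same as $x\notin D_{\mathbb{T}}$. As $D_{\mathbb{T}}$ is by definition the critical locus of $q$, and the critical points of a quotient map by a finite group are exactly the points with non-trivial stabiliser, this forces $\Stab_W(t)=\{1\}$; concretely, by the description of $\Stab_W(t)$ as an extension of the reflection group $\Stab_W^r(t)$ by a group of diagram automorphisms, a non-trivial stabiliser would put $t$ on a toric mirror or into $\mathbb{T}_{P/Q}$, contradicting $x\notin D_{\mathbb{T}}$. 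Hence $u^{-1}w^{-1}u'w=1$, i.e. $u'=wuw^{-1}$, which completes the argument.

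The main obstacle is precisely this last step: one must be certain that $D_{\mathbb{T}}$ detects \emph{all} points with non-trivial stabiliser, not merely the points on the toric mirrors — this is exactly why the locus $\mathbb{T}_{P/Q}$ of points fixed by a diagram automorphism $\gamma_i$ is included in the definition of $D_{\mathbb{T}}$. Were only the mirrors removed, a point fixed by a non-reflection element could contribute an extra element to $\Stab_W(t)$ and destroy the conclusion that $u'$ is conjugate to $u$. The remaining ingredients — that conjugation commutes with the $W$-action, that $w\cdot\mathbb{T}_u=\mathbb{T}_{wuw^{-1}}$, and Proposition \ref{toricTits} — are routine.
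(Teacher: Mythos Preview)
Your argument is correct and follows essentially the same route as the paper. The paper's proof is terser: it simply takes $t\in\mathbb{T}_{u_1}\cap\mathbb{T}_{u_2}$, observes $u_1u_2\cdot t=t$, and concludes $q(t)\in D_{\mathbb{T}}(\mathbb{R})$; your version makes explicit the reduction to this case via $t'=w\cdot t\in\mathbb{T}_{u'}\Rightarrow t\in\mathbb{T}_{w^{-1}u'w}$, and spells out why a nontrivial stabiliser forces $q(t)\in D_{\mathbb{T}}$, but the core idea is identical.
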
 
\begin{proof}
Suppose that $t\in \mathbb{T}_{u_1}\cap \mathbb{T}_{u_2}$ for involutions $u_1,u_2\in W$. This implies that $u_1 \cdot t = u_2\cdot t =\bar{t}$ so that in particular $u_1u_2 \cdot t = t$. But then $q(t)\in D_{\mathbb{T}}(\mathbb{R})$.  
\end{proof}

Since we are interested in connected components it suffices to consider the part of $D_\mathbb{T}(\mathbb{R})$ of codimension one in $(W\backslash \mathbb{T})(\mathbb{R})$. This motivates the following definition.

\begin{dfn}The \emph{real discriminant} $D_{\mathbb{T},\mathbb{R}}$ of $(W\backslash \mathbb{T})(\mathbb{R})$ is the closure of the nonsingular part of $D_\mathbb{T}(\mathbb{R})$. The difference is a $D_\mathbb{T}(\mathbb{R})-D_{\mathbb{T},\mathbb{R}}$ has codimension $\geq 2$ in $(W\backslash \mathbb{T})(\mathbb{R})$.
\end{dfn}

\begin{prop}If we assume that $\mathbb{T}_{P/Q}\cap \mathbb{T}_u$ has codimension $\geq 2$ for all involutions $u\in W$ then 
\[q^{-1}D_{\mathbb{T},\mathbb{R}}=\bigcup \left( \mathbb{T}_u\cap H_s \right) \] where the union runs over all involutions $u \in W$ and reflections $s\in W$ that commute with $u$.
\end{prop}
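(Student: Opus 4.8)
The plan is to identify the codimension-one part of $D_{\mathbb{T}}(\mathbb{R})$: I will argue that it can only come from the toric mirrors, never from the locus $\mathbb{T}_{P/Q}$, and that a toric mirror $H_\alpha$ meets a real torus $\mathbb{T}_u(\mathbb{R})$ in codimension one precisely when the reflection $s_\alpha$ commutes with $u$. Throughout I use that $q$ is a finite morphism, so it preserves dimensions of closed subsets, together with Proposition~\ref{toricTits}, which gives $q^{-1}(W\backslash\mathbb{T})(\mathbb{R})=\bigcup_{u^2=1}\mathbb{T}_u(\mathbb{R})$, each $\mathbb{T}_u(\mathbb{R})$ of real dimension $n=\mathrm{rk}\,Q$. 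Consequently, for a closed $Z\subseteq(W\backslash\mathbb{T})(\mathbb{R})$ the codimension of $Z$ equals the minimum over all involutions $u$ of the codimension of $q^{-1}Z\cap\mathbb{T}_u(\mathbb{R})$ inside $\mathbb{T}_u(\mathbb{R})$; that is, codimension is detected torus by torus, and $q^{-1}D_{\mathbb{T},\mathbb{R}}$ is the closure of the pure codimension-one part of $q^{-1}D_{\mathbb{T}}(\mathbb{R})$.

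By definition $D_{\mathbb{T}}$ is the union of the $q$-images of the mirrors $H_\alpha$ and of $q(\mathbb{T}_{P/Q})$, so $q^{-1}D_{\mathbb{T}}(\mathbb{R})$ is contained in $\left(\bigcup_\alpha H_\alpha\cup\bigcup_{w\in W}w\,\mathbb{T}_{P/Q}\right)\cap\bigcup_u\mathbb{T}_u(\mathbb{R})$. I would first dispose of the $\mathbb{T}_{P/Q}$-term: for any $w\in W$ and any involution $u$ the diffeomorphism $w^{-1}$ carries $w\,\mathbb{T}_{P/Q}\cap\mathbb{T}_u(\mathbb{R})$ onto $\mathbb{T}_{P/Q}\cap\mathbb{T}_{w^{-1}uw}(\mathbb{R})$, which has codimension $\geq 2$ in $\mathbb{T}_{w^{-1}uw}(\mathbb{R})$ by the hypothesis of the proposition; hence $w\,\mathbb{T}_{P/Q}\cap\mathbb{T}_u(\mathbb{R})$ has codimension $\geq 2$ in $\mathbb{T}_u(\mathbb{R})$ for all $w$ and $u$, and by the previous paragraph this part contributes nothing to $D_{\mathbb{T},\mathbb{R}}$. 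So $q^{-1}D_{\mathbb{T},\mathbb{R}}$ is the closure of the codimension-one part of $\bigcup_u\bigcup_\alpha\left(\mathbb{T}_u(\mathbb{R})\cap H_\alpha\right)$.

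The heart of the matter is a dichotomy for a fixed involution $u$ and a root $\alpha$. Writing the defining relation $u\cdot t=\bar t$ of $\mathbb{T}_u(\mathbb{R})$ on characters yields $(u\alpha)(t)=\overline{\alpha(t)}$ for every $t\in\mathbb{T}_u(\mathbb{R})$. If $u\alpha=\alpha$ this forces $\alpha(t)\in\mathbb{R}^\ast$ on all of $\mathbb{T}_u(\mathbb{R})$, and if $u\alpha=-\alpha$ it forces $|\alpha(t)|=1$; in either case $\mathbb{T}_u(\mathbb{R})\cap H_\alpha=\{t:\alpha(t)=1\}$ is cut out by a single real equation and is, where nonempty, of codimension one (here one checks, using the normal form $\mathbb{T}_u(\mathbb{R})\cong(\mathbb{R}^\ast)^{n_1}\times(S^1)^{n_2}\times(\mathbb{C}^\ast)^{n_3}$ and the sequence~(\ref{exseq}), that $t\mapsto\alpha(t)$ is a submersion onto $\mathbb{R}^\ast$, resp.\ $S^1$). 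If instead $u\alpha\neq\pm\alpha$, then $\alpha$ and $u\alpha$ are independent characters, $t\mapsto\alpha(t)$ maps $\mathbb{T}_u(\mathbb{R})$ onto a copy of $\mathbb{C}^\ast$, and the condition $\alpha(t)=1$ (which moreover forces $(u\alpha)(t)=1$) defines a locus of codimension $\geq 2$. Since $u\alpha=\pm\alpha$ is equivalent to $u s_\alpha u^{-1}=s_{u\alpha}=s_\alpha$, i.e.\ to $s_\alpha$ commuting with $u$, the roots contributing in codimension one are exactly those with $s_\alpha\in C_W(u)$, that is the roots of $R_u^+\cup R_u^-$. Hence the codimension-one part of $\bigcup_u\bigcup_\alpha(\mathbb{T}_u(\mathbb{R})\cap H_\alpha)$ equals $\bigcup(\mathbb{T}_u\cap H_s)$ over involutions $u$ and reflections $s$ commuting with $u$; each $\mathbb{T}_u(\mathbb{R})\cap H_s$ is already closed with dense smooth locus, so passing to closures adds nothing, and the proposition follows.

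I expect the main obstacle to be the codimension bookkeeping rather than any single computation: in particular the claim that codimension in $(W\backslash\mathbb{T})(\mathbb{R})$ can be tested on each real torus $\mathbb{T}_u(\mathbb{R})$ — this is precisely what makes the hypothesis on $\mathbb{T}_{P/Q}$ enough — and the submersivity statements needed to guarantee that $\mathbb{T}_u(\mathbb{R})\cap H_\alpha$ really has codimension one (and is nonempty when it should be) rather than being a priori bigger or smaller. The remaining ingredients — the character identity $(u\alpha)(t)=\overline{\alpha(t)}$ and the description of the reflections in $C_W(u)$ as the $s_\alpha$ with $\alpha\in R_u^\pm$ — are either immediate or already recorded in the text.
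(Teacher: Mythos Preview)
Your argument is correct, and it takes a genuinely different route from the paper's. The paper's proof is a one-line stabilizer argument: a point $t\in\mathbb{T}_u(\mathbb{R})$ maps to a \emph{nonsingular} point of $D_{\mathbb{T}}(\mathbb{R})$ precisely when its stabilizer in $W$ contains a unique reflection $s$; but then $usu$ is also a reflection fixing $t$ (since $u\cdot t=\bar t$ and $W$ is defined over $\mathbb{R}$), so uniqueness forces $usu=s$, i.e.\ $s$ commutes with $u$. You instead compute the codimension of each $\mathbb{T}_u(\mathbb{R})\cap H_\alpha$ directly via the character identity $(u\alpha)(t)=\overline{\alpha(t)}$ and the trichotomy $u\alpha=\alpha$, $u\alpha=-\alpha$, $u\alpha\neq\pm\alpha$, reducing the question to whether $\alpha$ restricts to a submersion onto $\mathbb{R}^\ast$, $S^1$, or $\mathbb{C}^\ast$. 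The paper's approach is slicker and avoids all the submersivity bookkeeping, but it leans on the (standard, though not spelled out here) identification of the smooth locus of a reflection-group discriminant with the locus where exactly one mirror passes. Your approach is more hands-on and has the side benefit of explaining \emph{why} the non-commuting mirrors disappear: they meet $\mathbb{T}_u(\mathbb{R})$ only in codimension two, a fact the paper's argument implies but does not make visible. Both handle the $\mathbb{T}_{P/Q}$ contribution identically, by direct appeal to the hypothesis.
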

\begin{proof}
Under the assumption of the proposition the set $\mathbb{T}_{P/Q}$ does not contribute to the real discriminant. In this case an element $t\in \mathbb{T}_u$ is mapped to a nonsingular point of $D_\mathbb{T}(\mathbb{R})$ by $q$ if and only if there is a unique reflection $s\in W$ that fixes $t$. Since the reflection $usu$ also fixes $t$, we must have that $s$ commutes with $u$.
\end{proof}
The assumption is satisfied for type $E_7$. In that case $P/Q$ is generated by the involution $\gamma_6$. The locus of fixed points $V^{\gamma_6}$ has dimension four so that the codimension of $\mathbb{T}_{P/Q}\cap \mathbb{T}_u \geq 3$. In order to prove that the space of equation (\ref{concom}) is connected it is sufficient to prove that the space 
\[\Stab_W(\mathbb{T}_u^\varpi)\backslash \mathbb{T}_u^\varpi -D_{\mathbb{T}_u^\varpi,\mathbb{R}}  \]   
is connected. We prove the slightly stronger result that the quotient 
\[ \Stab_{W_u}(\mathbb{T}_u^\varpi)\backslash \mathbb{T}_u^\varpi -D_{\mathbb{T}_u^\varpi,\mathbb{R} } \]   
by the smaller group $\Stab_{W_u}(\mathbb{T}_u^\varpi)$ is connected. We start with a lemma on the decomposition $V=V_u^+ \oplus V_u^-$ into $\pm 1$-eigenspaces for $u$ on the weight lattice $P$. Denote by $P_u^+$ and $P_u^-$ the orthogonal projections of $P$ into $V_u^+$ and $V_u^-$ respectively.

\begin{lem}The lattice $P_u^-$ is equal to the weight lattice $P(W_u^-)$ of $W_u^-$. If we also assume that $-1 \in W$ then $P_u^+=P(W_u^+)$ and $R_u^+$ spans $V_u^+$.
\end{lem}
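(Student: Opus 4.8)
# Proof Proposal for the Lemma

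The plan is to prove the two assertions separately, treating the statement about $P_u^-$ first since it does not require the hypothesis $-1\in W$, and then deducing the statement about $P_u^+$ from it by applying the first part to the involution $-u$ (which is also an involution precisely because $-1\in W$ is central).

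\textbf{The lattice $P_u^-$.} Recall that $W_u^- = W(R_u^-)$ where $R_u^- = R\cap V_u^-$, and that $V_u^-$ is spanned by the roots of $R_u^-$ (stated in the excerpt just before the theorem on centralisers). First I would observe that since $P$ is the weight lattice of $R$, which is of $ADE$ type and hence self-dual in the sense that $P = \{\lambda\in V : (\lambda,\alpha)\in\mathbb{Z}\ \forall\alpha\in R\}$ after the identification $R = R^\vee$, the orthogonal projection $P_u^-$ of $P$ onto $V_u^-$ consists of vectors $\mu\in V_u^-$ with $(\mu,\alpha)\in\mathbb{Z}$ for all $\alpha\in R_u^-$: indeed, for $\alpha\in R_u^-$ one has $(\mu,\alpha) = (\lambda,\alpha)$ where $\lambda\in P$ projects to $\mu$, since $\alpha\in V_u^-$. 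This shows $P_u^- \subseteq P(W_u^-)$. For the reverse inclusion, the key point is that the projection $P\to V_u^-$ is surjective onto $P(W_u^-)$: given a fundamental weight $\varpi$ of $W_u^-$ dual to a simple root $\alpha_j\in R_u^-$, I would exhibit an element of $P$ projecting to it. Here I would use that $\varpi = \tfrac12(x - u\cdot x)$-type formulas are not quite enough; instead the cleanest route is to note that $P(W_u^-)$ is generated by the $W_u^-$-orbit of elements $\tfrac12\alpha$-combinations, but more robustly: the projection of $P$ is a lattice containing $\tfrac12(\lambda - u\lambda)$ for $\lambda\in P$, and since $u$ is a product of reflections in $R_u^-$, the vectors $\tfrac12(\lambda-u\lambda)$ together with the $R_u^-$ coweights generate $P(W_u^-)$. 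I expect the honest argument here is: $Q(R_u^-)\subseteq P_u^-$ because $\alpha = \tfrac12(\alpha - u\alpha)$ for... no — rather, for a root $\alpha\in R_u^-$ we have $\alpha\in P$ already and $\alpha$ projects to itself, so $Q_u^-\subseteq P_u^-\subseteq P_u^-\subseteq P(W_u^-)$, and then the finite-index comparison is handled by a case check or by the general fact (which I would cite from the Borel--de Siebenthal / Bourbaki circle of ideas) that for $ADE$ root systems the projection of the weight lattice onto a subsystem spanned by a sub-root-system equals the weight lattice of that subsystem.

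\textbf{The lattice $P_u^+$ and spanning.} Assuming $-1\in W$, set $u' = -u$; this is again an involution, with $V_{u'}^- = V_u^+$ and $V_{u'}^+ = V_u^-$, and $R_{u'}^- = R\cap V_u^+ = R_u^+$. Now I would like to apply the first part of the lemma to $u'$ to conclude $P_{u'}^- = P(W_{u'}^-)$, i.e. $P_u^+ = P(W_u^+)$. But the first part required that $V_{u'}^-$ be spanned by the roots of $R_{u'}^-$ — equivalently that $R_u^+$ spans $V_u^+$, which is exactly the last clause we are trying to prove. So the real content is: \emph{when $-1\in W$, the subsystem $R_u^+ = R\cap V_u^+$ spans $V_u^+$}. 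I would prove this by symmetry with the minus-eigenspace: $-1\in W$ means $-1$ fixes both $V_u^+$ and $V_u^-$, and is realised as a word in reflections $s_\beta$, $\beta\in R$; restricting attention to $V_u^+$, the reflections among these whose roots have nonzero $V_u^+$-component must, after projection, generate the reflection group $-u|_{V_u^+} = \mathrm{id}$... that is not directly it either. The cleanest approach: $-1\in W$ implies $w_0$, the longest element, equals $-1$, and then for the parabolic-type decomposition given by the centraliser theorem we have $R = R_u^+ \sqcup R_u^- \sqcup R_u^c$ up to the roots not orthogonal to the $\rho_\pm$; counting dimensions, $R_u^-$ spans $V_u^-$ (given), and one shows $\dim\,\mathrm{span}(R_u^+) + \dim\,\mathrm{span}(R_u^-) = n$ using that $W_u = W_u^-\times W_u^+$ together with $G_u$ accounts for a finite-index subgroup of $C_W(u)$ and $C_W(u)$ acts on $V_u^\pm$ through reflection groups whose mirror arrangements are essential when $-1\in W$.

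\textbf{Main obstacle.} The genuinely delicate point is the spanning assertion "$R_u^+$ spans $V_u^+$", and correspondingly the surjectivity half of "$P_u^- = P(W_u^-)$". For the latter I would fall back on the explicit description: $P_u^-$ contains all $\tfrac12(\lambda - u\lambda)$ with $\lambda\in P$, and since $u$ acts on $V_u^-$ as $-1$, these are just $P$ projected, so one needs that $\{$projection of $P$ onto $V_u^-\}$ is all of $P(W_u^-)$ — this can fail for general root systems but holds for $ADE$, and I expect the author proves it (or it is implicit) via the normal-basis decomposition $P = P_{1,u}\oplus P_{2,u}\oplus P_{3,u}$ from equation~(\ref{latdecom}): the projection onto $V_u^-$ kills $P_{1,u}$, acts as identity on $P_{2,u}$, and halves the "diagonal" of $P_{3,u}$, and one identifies the result with the coweight=weight lattice of $W_u^-$ by comparing with the known structure of $R_u^-$. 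For the spanning statement, the hypothesis $-1\in W$ is exactly what forces the eigenspace decomposition to be "balanced" — without it one could have $V_u^+$ containing no roots at all (e.g. $u = -1$ in a Weyl group not containing $-1$), so the proof must use centrality/existence of $-1$ essentially, and I would do so by invoking that $-u$ is then a legitimate involution and running the $R_u^-$-spanning input (which we are \emph{given} for $u$) through the $u\leftrightarrow -u$ symmetry, being careful that this is not circular — the way out of circularity is that "$R_u^-$ spans $V_u^-$" was established earlier in the paper as a general fact about the centraliser decomposition, not assumed, so applying it to $-u$ gives "$R_{-u}^-$ spans $V_{-u}^-$", i.e. "$R_u^+$ spans $V_u^+$", for free.
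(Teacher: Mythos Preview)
Your approach to the second assertion---passing from $u$ to $-u$ and invoking the first part---is exactly what the paper does, and your worry about circularity is correctly resolved at the end: Richardson's classification applies to \emph{every} involution, so the spanning property $V_{-u}^- = \mathbb{R}R_{-u}^-$ is already available.

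Where your proposal has a genuine gap is the reverse inclusion $P(W_u^-)\subseteq P_u^-$ in the first assertion. You correctly obtain $P_u^-\subseteq P(W_u^-)$, but for the other direction you only offer a hoped-for ``general fact'' or a case check, and the $\tfrac12(\lambda-u\lambda)$ manoeuvre you try does not close the argument. The paper's proof avoids this difficulty entirely by exploiting something you never use: the involution $u$ may be taken to be $w_I$ for a subset $I\subseteq S$ of the simple roots (this is precisely the content of Richardson's theorem, already invoked in the paper). Then the simple roots of $R_u^-$ are $\{\alpha_i\}_{i\in I}$, a \emph{subset} of the simple roots of $R$. The fundamental weights of $R_u^-$ are therefore the projections $\varpi_i^- := \mathrm{Proj}_{V_u^-}(\varpi_i)$ for $i\in I$, because $(\varpi_i^-,\alpha_j)=(\varpi_i,\alpha_j)=\delta_{ij}$ for $j\in I$. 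Meanwhile for $j\notin I$ one has $(\varpi_j,\alpha_i)=0$ for all $i\in I$, so $\varpi_j\in V_u^+$ and $\mathrm{Proj}_{V_u^-}(\varpi_j)=0$. Hence
\[
P_u^- = \mathrm{Proj}_{V_u^-}\bigl(\mathbb{Z}\{\varpi_1,\ldots,\varpi_n\}\bigr)=\mathbb{Z}\{\varpi_i^-\}_{i\in I}=P(W_u^-),
\]
with both inclusions falling out at once. The missing idea in your attempt is simply to choose the representative $u=w_I$; once you do that, no index computation or appeal to a black-box fact is needed.
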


\begin{proof}
The simple system $\{\alpha_i\}_{i\in I}$ for the root system $R_u^-$ is a basis for $V_u^-$. The dual basis is given by $\{ \varpi_i^-\}_{i\in I}$ where $\varpi_i^-=\Proj_{V_u^-}(\varpi_i)$. We have
\begin{align*}
P(W_u^-) &= \{ \mathbb{Z} \in V_u^- \ ; \ (\mathbb{Z},\alpha_i)\in \mathbb{Z}  \quad \forall i\in I \} \\
&= \mathbb{Z} \left\{ \varpi_i^- \right\}_{i\in I} \\
&= P_u^-.
\end{align*}
If $-1\in W$ then for every involution $u\in W$ its opposite $-u$ is also an involution in $W$. Furthermore $V_u^\pm=V_{-u}^\mp$ and $W_u^\pm=W_{-u}^\mp$ so that we have equalities
\begin{displaymath}
P(W_u^+)=P(W_{-u}^-)=\Proj_{V_{-u}^-}(P)=\Proj_{V_u^+}(P).
\end{displaymath}
Similarly we have $\mathbb{R}R_u^+ = \mathbb{R}R_{-u}^- = V_{-u}^-=V_u^+$ so that $R_u^+$ spans $V_u^+$.
\end{proof}

\begin{thm}\label{technical}
Assume that $-1\in W$ and that $\mathbb{T}_{P/Q}\cap \mathbb{T}_u$ has codimension $\geq 2$ for all involutions $u\in W$. Let $\mathcal{A}_u^-$ be the fundamental alcove for the action of the affine Weyl group $Q_u^{-}\rtimes W_u^-$ on $V_u^-$ and let $\mathcal{C}_u^{\varpi +}$ be the fundamental chamber of the action of the Weyl group $\Stab_{W_u^+}\left(\frac{1}{2}\varpi \right)$ on the affine space $\frac{1}{2}\varpi+iV_u^+$. Then there is an isomorphism of orbifolds
\begin{displaymath}
\Stab_{W_u}(\mathbb{T}_u^\varpi) \big\backslash \mathbb{T}_u^\varpi - D_{\mathbb{T}_u^\varpi,\mathbb{R}} \cong \Stab_{P_u^{+}/Q_u^{+}}\left(\frac{1}{2}{\varpi} \right) \big\backslash \mathcal{C}_u^{\varpi +}
 \times \left( P_u^{-}/Q_u^{-} \right) \big\backslash \mathcal{A}_u^- .\end{displaymath}
\end{thm}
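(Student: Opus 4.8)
The plan is to lift the whole picture to the universal cover $V_{\mathbb C}$ of $\mathbb T$ via the exponential sequence (\ref{exseq}) and to factor the resulting affine‑hyperplane arrangement along the orthogonal decomposition $V=V_u^+\oplus V_u^-$ attached to $u$.

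First I would compute the preimage of $\mathbb T_u^\varpi$ under $\exp$. Writing $z=x+iy$ with $x,y\in V$, the condition $u\cdot\exp(z)=\overline{\exp(z)}$ unwinds to $y\in V_u^+$ and $(u+1)x\in P$; restricting to the single affine subspace $A:=\tfrac12\varpi+iV_u^+ +V_u^-$ of (\ref{components}), the map $\exp|_A$ is onto $\mathbb T_u^\varpi$ with fibres the cosets of $P\cap V_u^-$, so
\[
\mathbb T_u^\varpi\;\cong\;A/(P\cap V_u^-)\;\cong\;iV_u^+\ \times\ \bigl(V_u^-/(P\cap V_u^-)\bigr),
\]
a genuine product because $P\cap V_u^-$ translates only the second factor. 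By the Lemma just proved (valid since $-1\in W$) the projections of $P$ to $V_u^\pm$ are the weight lattices $P_u^\pm=P(W_u^\pm)$ and $R_u^+$ spans $V_u^+$; this is what lets the two factors see honest affine Weyl arrangements. By the Proposition preceding the theorem, the mirrors whose removal defines $D_{\mathbb T_u^\varpi,\mathbb R}$ pull back to the linear mirrors $H_\alpha$ ($\alpha\in R_u^+$) inside $iV_u^+$ together with the affine mirrors $H_{\alpha,k}$ ($\alpha\in R_u^-$, $k\in\mathbb Z$) inside $V_u^-$.

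Next I would identify $\Stab_{W_u}(\mathbb T_u^\varpi)$, which equals $W_u^-\times\{w\in W_u^+:\ w\varpi\equiv\varpi \bmod 2P_u^+\}$, together with its action on the cover. The point $\tfrac12\varpi$ is not fixed by $W_u^+$ on the nose, so on the $V_u^-$–factor the action is the linear action of $W_u^-$ twisted by translations in $P_u^-/(P\cap V_u^-)$; assembled with $W_u^-$ these upgrade the acting group to the extended affine Weyl group $P_u^-\rtimes W_u^-$, whose quotient — once the affine mirrors of $R_u^-$ are deleted — is $(P_u^-/Q_u^-)\backslash\mathcal A_u^-$ by the isomorphism (\ref{extiso}) and Lemma \ref{stab}. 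On the $iV_u^+$–factor I would use the canonical bijection $\{w\in W_u^+:w\varpi\equiv\varpi\bmod 2P_u^+\}\cong\Stab_{P_u^+\rtimes W_u^+}(\tfrac12\varpi)$ together with the stabiliser splitting of Lemma \ref{stab} to write this group as a semidirect product of the finite reflection group $\Stab_{W_u^+}(\tfrac12\varpi)$ (whose fundamental chamber is $\mathcal C_u^{\varpi+}$, cf.\ (\ref{BorelSiebenthal})) by the group of diagram automorphisms $\Stab_{P_u^+/Q_u^+}(\tfrac12\varpi)$; deleting the mirrors of $R_u^+$ then yields $\Stab_{P_u^+/Q_u^+}(\tfrac12\varpi)\backslash\mathcal C_u^{\varpi+}$. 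Taking the product of the two factors, and reading off the orbifold structure from the point stabilisers on the boundary strata of $\mathcal A_u^-$ and $\mathcal C_u^{\varpi+}$, gives the asserted isomorphism.

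The delicate point — where I expect the real work to be — is to show that the $\Stab_{W_u}(\mathbb T_u^\varpi)$–quotient genuinely produces a \emph{product} and not merely a diagonal quotient, and that the lattices come out as stated. The group does not split along $V_u^+\oplus V_u^-$: the deck lattice $P$ is not $\ker(u-1)\oplus\ker(u+1)$ in general (the discrepancy being a $2$–torsion group indexed by the swap blocks of $u$), and $\Stab_{W_u^+}(\mathbb T_u^\varpi)$ is strictly larger than $\Stab_{W_u^+}(\tfrac12\varpi)$. One has to match these two defects — the extra lattice classes appearing on the $V_u^-$–side being precisely what promotes $P\cap V_u^-$ to $P_u^-=P(W_u^-)$, while on the $V_u^+$–side the same elements act through $\Stab_{P_u^+/Q_u^+}(\tfrac12\varpi)$ — and check that after these identifications the cross terms disappear. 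It is here that the hypotheses $-1\in W$ (through $P_u^\pm=P(W_u^\pm)$ and $R_u^\pm$ spanning $V_u^\pm$) and the standing assumption that $\mathbb T_{P/Q}\cap\mathbb T_u$ has codimension $\ge 2$ (so $\mathbb T_{P/Q}$ does not enter the real discriminant) are used; keeping the orbifold loci straight on the facets of $\mathcal A_u^-$ and $\mathcal C_u^{\varpi+}$ is the remaining bookkeeping.
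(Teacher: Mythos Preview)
Your approach is essentially the paper's: decompose $\mathbb{T}_u^\varpi$ along $V=V_u^+\oplus V_u^-$, recognise each factor as governed by the extended affine Weyl group $P_u^\pm\rtimes W_u^\pm$ (via the preceding Lemma that $P_u^\pm=P(W_u^\pm)$ when $-1\in W$), and then invoke Lemma~\ref{stab} on each factor to produce the right-hand side. The paper's proof is terser --- it works directly with the torus factors $\exp(\tfrac12\varpi+iV_u^+)$ and $\exp(V_u^-)$ and simply asserts the product splitting of both $\mathbb{T}_u^\varpi$ and $\Stab_{W_u}(\mathbb{T}_u^\varpi)$ without unpacking the lattice-matching issue you flag as the delicate point; your more careful discussion of how the discrepancy between $P\cap V_u^-$ and $P_u^-$ is absorbed is exactly what the paper leaves implicit.
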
   

\begin{proof}
Similar to the decomposition $V=V_u^+\oplus V_u^-$ there is a decomposition
\begin{align*}
\mathbb{T}_u^\varpi &= \exp \left(\frac{1}{2}\varpi +iV_u^+ + V_u^- \right) \\
& \cong \exp \left(\frac{1}{2}\varpi +iV_u^+\right) \times \exp \left(V_u^-\right)
\end{align*}
where $[\varpi] \in P_{1,u}/2P_{1,u}$ (so that in particular $\varpi \in V_u^+$). The stabilizer of $\mathbb{T}_u^\varpi$ in $W_u$ also splits into a product
\begin{align*}
\Stab_{W_u}\left( \mathbb{T}_u^\varpi \right) &\cong \Stab_{W_u^+} \exp\left( \frac{1}{2}\varpi + iV_u^+ \right) \times \Stab_{W_u^-} \exp\left( V_u^- \right) \\
&\cong \Stab_{P_u^+\rtimes W_u^+}\left(\frac{1}{2} \varpi \right) \times \Stab_{P_u^-\rtimes W_u^-} \left( V_u^- \right).
\end{align*}
The result now follows from applying Lemma \ref{stab} to these factors and taking the quotient. 
\end{proof}

\begin{rmk}While the roots of the Weyl group $W_u^+$ span the vector space $V_u^+$ the same need not be true for the Weyl group $\Stab_{W_u^+}(\frac{1}{2}\varpi)$, even if the group $W$ contains $-1$. An example is given by the Weyl group of type $E_7$ and the trivial involution $u=1$. In that case the Weyl group $\Stab_W(\frac{1}{2}\varpi_6)$ is of type $E_6$ and has rank six while $V$ is of dimension seven. A fundamental domain for this action is the product of a Weyl chamber of type $E_6$ and its orthogonal complement in $V$ which is a copy of $\mathbb{R}$. From this discussion we see that the chamber $\mathcal{C}_u^{\varpi +}$ is not a Weyl chamber in the traditional sense but the product of a Weyl chamber and an affine space. In particular it is connected, as is the alcove $\mathcal{A}_u^-$, which implies the following corollary.  
\end{rmk}

\begin{cor}Under the assumptions of Theorem \ref{technical}, the space 
\[ \Stab_{W_u}(\mathbb{T}_u^\varpi) \big\backslash \mathbb{T}_u^\varpi - D_{\mathbb{T}_u^\varpi,\mathbb{R}} \] 
is connected. 
\end{cor}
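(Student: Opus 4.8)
The plan is to deduce the corollary directly from the theorem that precedes it. That theorem furnishes an isomorphism of orbifolds
\[
\Stab_{W_u}(\mathbb{T}_u^\varpi) \big\backslash \mathbb{T}_u^\varpi - D_{\mathbb{T}_u^\varpi,\mathbb{R}} \;\cong\; \Stab_{P_u^{+}/Q_u^{+}}\!\left(\tfrac{1}{2}\varpi\right) \big\backslash \mathcal{C}_u^{\varpi +} \;\times\; \left( P_u^{-}/Q_u^{-} \right) \big\backslash \mathcal{A}_u^-,
\]
so it suffices to check that each of the two factors on the right is connected and then invoke the elementary fact that a finite product of connected topological spaces is connected.

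For the first factor, the fundamental chamber $\mathcal{C}_u^{\varpi+}$ of the reflection group $\Stab_{W_u^+}(\tfrac12\varpi)$ acting on the affine space $\tfrac12\varpi + iV_u^+$ is by definition cut out there by a system of linear inequalities, hence convex and in particular connected; its image under the quotient map by the finite group $\Stab_{P_u^{+}/Q_u^{+}}(\tfrac12\varpi)$ is then connected, being the continuous image of a connected space. The second factor is handled identically: the fundamental alcove $\mathcal{A}_u^-$ for the affine Weyl group $Q_u^{-}\rtimes W_u^-$ is a simplex, hence convex and connected, and its quotient by the finite abelian group $P_u^{-}/Q_u^{-}$ is again a continuous image of a connected space, so connected. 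Multiplying the two connected quotients yields the claim.

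I do not expect a genuine obstacle here: all of the substance lies in the theorem, which has already traded the toric mirrors and the residual $P/Q$-action for passage to a fundamental chamber and a fundamental alcove; what remains is only the topological remark that quotients and finite products of connected spaces stay connected. The one point worth a sentence of care is the orbifold-versus-coarse-space distinction, but connectedness is a property of the underlying topological space, and the coarse space of each factor is the set-theoretic quotient of a connected set, so nothing changes.
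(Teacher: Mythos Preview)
Your argument is correct and is exactly the approach the paper intends: the corollary is stated immediately after the theorem with no separate proof, precisely because connectedness is read off from the product decomposition as you explain. One tiny cosmetic point: when $R_u^-$ is reducible (e.g.\ $u$ of type $A_1^k$), the fundamental alcove $\mathcal{A}_u^-$ is a product of simplices rather than a single simplex, but this is still convex and your argument goes through unchanged.
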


\section{The Geometry of the $20$ components}\label{geometry}

In this section we complete the correspondence between the $20$ connected components of the space $(W\backslash \mathbb{T}^\circ)(\mathbb{R})$ for a root system of type $E_7$ and the components of the moduli space $(\mathcal{Q}_1^\circ)^\mathbb{R}$. For each of these components we find a representative pair $\left( C,p \right)$ with $p\in C(\mathbb{R})$. The results are listed in the tables of Section \ref{tables}.

\begin{thm}
The curves in the tables of Section \ref{tables} represent the $20$ different components of $(\mathcal{Q}_1^\circ)^\mathbb{R}$.  
\end{thm}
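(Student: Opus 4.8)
The plan is to verify, component by component, that the twenty explicit pointed quartics listed in the tables of Section~\ref{tables} are pairwise non-isomorphic over $\mathbb{R}$ and that they exhaust all components of $\mathcal{Q}_{\mathbb{R},1}^\circ$. By Theorem~\ref{bigrealthm} we have an isomorphism $\mathcal{Q}_{\mathbb{R},1}^\circ \cong (W\backslash \mathbb{T}^\circ)(\mathbb{R})$ for $W$ of type $E_7$, and by the analysis of Section~\ref{reflection} (culminating in Table~\ref{tablecom} and the corollary that removing the mirrors adds no components) the target has exactly $20$ connected components, indexed by pairs $(u,[\varpi])$ with $u$ a conjugacy class of involutions in $W$ and $[\varpi]\in W_u^+\backslash(P_{1,u}/2P_{1,u})$. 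So it suffices to show that each tabulated pair $(C,p)$ lands in a distinct one of these $20$ components, since a surjection from a $20$-element set of components to a $20$-element set of components forced to be injective is a bijection.

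First I would recall from Section~\ref{RealDelPezzo} the recipe attaching to a real pointed quartic $(C,p)$ its invariant: form the double cover $X_-\to\mathbb{P}^2$ with the non-orientable real form, restrict to $Y_-=\psi^{-1}(T_pC)$, and read off the conjugacy class $u$ of the involution induced on $\Pic^0 X_-\cong Q$ together with the character $\chi\in\mathbb{T}_u(\mathbb{R})$. The class $u$ is determined by the topological type $n(C)$ of $C(\mathbb{R})$ via Table~\ref{invtable} (the $c_-$ column): $n=4$ gives $u=1$, $n=3$ gives $A_1$, $n=2$ gives $A_1^2$, $n=1$ gives $A_1^3$, and $n=0$ or $n=2'$ gives $D_4$. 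So the first coarse invariant separating the tables is simply the number and nesting of ovals of $C(\mathbb{R})$, which is a rigid-isotopy invariant and hence constant on components. Within a fixed $u$, the finer invariant $[\varpi]\in P_{1,u}/2P_{1,u}$ up to $W_u^+$ must be computed from the geometry of the chosen representative. For the involutions with $n_1=1$ (namely $D_6,D_4A_1,A_1^4$ on the orientable side, but on our non-orientable side only $A_1^3$ has $n_1=1$) there is nothing to check beyond $u$; for $u\in\{1,A_1,A_1^2,D_4,A_1^{3\prime}\}$ one must exhibit the $2$, $3$, $3$, $2$, $2$ values of $[\varpi]$ respectively by an explicit computation on the representative curve.

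The concrete computation for a given $(C,p)$ would go as follows. Choose coordinates so that $C=\{F=0\}$ is one of the tabulated quartics and $p\in C(\mathbb{R})$ the tabulated point; compute the tangent line $T_pC$ and its residual intersection $\{p_1,p_2\}=(C\cdot T_pC)-2p$, noting whether $p_1,p_2$ are real (so $Y_-^{\ns}(\mathbb{R})\cong\mathbb{R}^\ast$) or complex conjugate (so $Y_-^{\ns}(\mathbb{R})\cong S^1$); this already distinguishes the two $D_4$-rows and the two $A_1^{3\prime}$-rows via the split/compact dichotomy of the real torus. To pin down $[\varpi]$ itself, one can use the interpretation of the component group $\pi_0(\mathbb{T}_u(\mathbb{R}))\cong P_{1,u}/2P_{1,u}$ in terms of the signs of the coordinates $t_i=\chi(e_i-e_0/3)$ relative to the real structure $t\mapsto\bar t^{\pm1}$ on $Y_-^{\ns}$: concretely, blow down $X_-$ to $\mathbb{P}^2$ along a marking, realize $Y_-$ as a real nodal cubic with a chosen real inflection point as origin, locate the seven (possibly not all real) blown-up points on $Y_-^{\ns}(\mathbb{C})$, and record for each the sign of its $\mathbb{R}^\ast$-component (or its class in $S^1$). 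The resulting sign vector, reduced modulo the $W_u^+$-action computed in the enumerated cases of Section~\ref{reflection} (where representatives $\{0,\varpi_5,\varpi_6,\varpi_7\}$ for $u=1$, $\{0,\varpi_3,\varpi_4\}$ for $A_1$, etc.\ were found), yields the component label.

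The main obstacle I anticipate is precisely this last bookkeeping: one must be careful that the marking chosen to blow down $X_-$ to $\mathbb{P}^2$ is compatible with the real structure (i.e.\ corresponds to a genuine equivariant blow-down, so that the seven points form a conjugation-stable configuration), and that the sign/component computation is carried out in the same normal basis for $P_{1,u}$ used to build Table~\ref{tablecom}. An efficient shortcut, which I would exploit wherever possible, is to avoid recomputing $\chi$ from scratch: instead one can deform each tabulated curve to a degenerate but still nodal configuration (e.g.\ a union of conics, or a curve with visible bitangents) where the positions of the seven points on the cubic are transparent, invoking rigid isotopy to conclude the component is unchanged. Once all twenty representatives are shown to realize twenty distinct $(u,[\varpi])$, the pigeonhole argument above closes the proof; equivalently, one checks directly that the curves in the tables cover every row of Table~\ref{tablecom} together with every listed value of $[\varpi]$, which is a finite verification.
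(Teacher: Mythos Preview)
Your outline is correct in principle but takes a markedly more computational route than the paper. The paper does not compute $\chi$ (or the sign vector of the seven points) for any of the tabulated curves in order to prove the theorem. Instead it argues as follows: first, the involution type $u$ versus $-u$ is read off from the number (and nesting) of ovals of $C(\mathbb{R})$ together with whether $Y^{\ns}(\mathbb{R})\cong\mathbb{R}^\ast$ or $S^1$, exactly as you say. Then, within each involution type, the paper observes that the topological types of the \emph{pairs} $\bigl(C(\mathbb{R}),T_pC(\mathbb{R})\bigr)$ are visibly distinct (one cannot deform one picture into another without the tangent line acquiring an extra tangency, i.e.\ without crossing one of the codimension-one strata $\mathcal{Q}_{1,\mathbb{R}}^{\text{flex}}$ or $\mathcal{Q}_{1,\mathbb{R}}^{\text{bit}}$). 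This dispatches all cases except the two $M$-quartics labelled $\varpi_6$ and $\varpi_7$, whose pairs $\bigl(C(\mathbb{R}),T_pC(\mathbb{R})\bigr)$ look alike; for these two the paper passes to the affine picture with $T_pC$ at infinity and uses a B\'ezout obstruction (a triangle of real lines separating the ovals) to show they cannot be rigidly isotoped into each other. The actual matching of pictures to labels $[\varpi]$ is done only afterwards, via adjacency relations under flex/bitangent degenerations and the orbit tables for $(m_+,m_-)$.

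Your approach, by contrast, proposes to compute the invariant $(u,[\varpi])$ directly for each of the twenty curves and check they are pairwise distinct. This would work and has the merit of simultaneously verifying the labels in the tables, but it requires precisely the equivariant blow-down and sign bookkeeping you flag as the main obstacle, which the paper entirely sidesteps. Two small slips in your write-up: the involutions with a single component are those with $n_1=0$ (namely $E_7,D_6,D_4A_1,A_1^4$), not $n_1=1$; and for $u=1$ there are $4$ values of $[\varpi]$, not $2$, while $A_1^3$ (with its $2$ values) is missing from your list of cases requiring the finer check.
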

\begin{proof}
It is clear that for the curves in the left columns the associated del Pezzo pair $(Y,Z)$ satisfies $Z^{\ns}(\mathbb{R}) \cong \mathbb{R}^\ast$ so that they belong to the space $\Stab_W(\mathbb{T}_u)\backslash \mathbb{T}_u^\circ$ for $u$ of type $1,A_1,A_1^2,A_1^3$ or $D_4$. Similarly the curves in the right column satisfy $Z^{\ns}(\mathbb{R})\cong S^1$ and belong to $\Stab_W(\mathbb{T}_{-u})\backslash \mathbb{T}_{-u}^\circ$ (so $-u$ is of type $E_7,D_6,D_4A_1,A_1^4$ or $A_1^{3\prime}$). Just check from the pictures whether $Z^{\ns}(\mathbb{R})$ has one or two components. With the exception of the two $M$-curves labeled $\varpi_6$ and $\varpi_7$ for all of the curves in the table the topological types of the pairs $\left( C(\mathbb{R}),T_pC(\mathbb{R} ) \right)$ are distinct. It is not possible to deform one of them into the other without passing through one of the strata $(\mathcal{Q}_1^\text{flex})^\mathbb{R}$, $(\mathcal{Q}_{1}^\text{bit})^\mathbb{R}$ or $(\mathcal{Q}_{1}^\text{hflex})^\mathbb{R}$ so that they lie indeed in different components of $(\mathcal{Q}_{1}^\circ)^\mathbb{R}$.  We need to prove that the $M$-curves labeled $\varpi_6$ and $\varpi_7$ are not in the same component. For this consider the affine quartics obtained by placing the tangent line $T_pC$ at infinity for these two curves. They are shown in Figure \ref{obstruction}. 

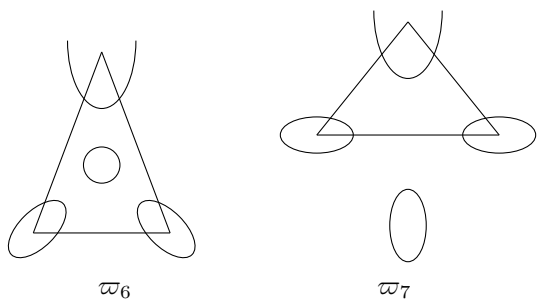
\begin{figure}[h]
\begin{displaymath}
\begin{array}{cc}
\begin{tikzpicture}[scale=0.3]
\draw (-1.5,5.5) arc (-180:0:1.5 and 3);
\draw[rotate = -45] (0,-4) ellipse (0.8 and 1.6);
\draw[rotate = 45] (0,-4) ellipse (0.8 and 1.6);
\draw (0,0) circle (0.8);
\draw (-3,-3) -- (3,-3);
\draw (-3,-3) -- (0,5);
\draw (3,-3) -- (0,5);
\end{tikzpicture}
\quad
&
\quad
\begin{tikzpicture}[scale=0.3]
\draw (-1.5,5.5) arc (-180:0:1.5 and 3);
\draw (-4,0) ellipse (1.6 and 0.8);
\draw (4,0) ellipse (1.6 and 0.8);
\draw (0,-4) ellipse (0.8 and 1.6);
\draw (-4,0) -- (4,0);
\draw (0,5) -- (-4,0);
\draw (0,5) -- (4,0);
\end{tikzpicture} \\
\varpi_6 & \varpi_7 
\end{array}
\end{displaymath}
\caption{The triangle forms an obstruction to deforming these two curves into each other by Bezout's Theorem.}
\label{obstruction}
\end{figure}

The triangle drawn in the picture forms an obstruction to deforming one into the other: it is not possible to move the central oval of the curve $\varpi_6$ out of the triangle without contradicting Bezout's theorem (a line intersects $C$ in four points). This is in agreement with Table $15$ in Appendix $1$ of \cite{Enriques} where certain affine $M$-quartics are classified. 
\end{proof}
 
To determine which of the curves in Section \ref{tables} corresponds to which component of $(\mathcal{Q}_1^\circ)^\mathbb{R}$ it is convenient to have an alternative description of the components of $\left( W\backslash \mathbb{T}^\circ\right)(\mathbb{R})$. For this we explicitly use the construction from the proof of Theorem \ref{bigrealthm} to associate to $\chi \in \mathbb{T}_u^\varpi(\mathbb{R})$ seven points in general position on the nonsingular locus of an irreducible real plane nodal cubic $Z \subseteq \mathbb{P}^2$. As before we identify $Z^{\ns}(\mathbb{C})\cong \mathbb{C}^\ast$ so that the points are defined by the formula  
\[ \chi \mapsto \left( P_1,\ldots,P_7 \right) \quad \text{with} \quad P_i=\chi \left( e_i-\frac{e_0}{3} \right)\]  
up to addition of an inflection point of $Z$. We start with the components corresponding to $M$-quartics.  

If $\chi$ is an element of the compact torus $\mathbb{T}_{-1} \cong \Hom(Q,S^1)$ then this construction determines seven points on the real point set $Z(\mathbb{R})$ of a real plane nodal cubic  with $Z^{\ns}(\mathbb{R}) \cong S^1$. 

If $\chi$ is an element of the split torus $\mathbb{T}_1 \cong \Hom(Q,\mathbb{R}^\ast)$ then the construction determines seven points on the real point set of a real plane nodal cubic with $Z^{\ns}(\mathbb{R})\cong \mathbb{R}^\ast$. If we choose the unique real inflection point of $Z$ as the unit element for the group law on $Z^{\ns}$ then these seven points are real. The Weyl group $W$ acts on $(\mathbb{R}^\ast)^7$ by permuting the coordinates and by triangular Cremona transformations in triples of points. For a seven-tuple $t=(t_1,\ldots,t_7)\in (\mathbb{R}^\ast)^7$ let $m_+$ denote the number of positive coordinates and $m_-$ the number of negative coordinates. The permutation orbit of $t$ is uniquely determined by the pair $(m_+,m_-)$. From Formula \ref{cremformula} we see that if we perform a triangular Cremona transformation in $t_i,t_j,t_k$ the sign of these points remains unchanged and the remaining points change sign if and only if one or three of the three points are negative. This describes the action of $W$ on the pairs $(m_+,m_-)$ and there are four orbits. These correspond to the four components of 
\[W\backslash \mathbb{T}_1^\circ = \bigsqcup_{[\varpi] \in W\backslash (P/2P)} \Stab_W(\mathbb{T}_1^\varpi) \big\backslash \left( \mathbb{T}_1^\varpi \right)^\circ \]
where $\varpi \in \{0,\varpi_5,\varpi_6,\varpi_7\}$. The precise correspondence is shown in Table \ref{orbitcor}. The stabilizers for the components in the table are calculated using Formula \ref{BorelSiebenthal} and Lemma \ref{stab}.   
\begin{table}[h]
\begin{displaymath}
\begin{array}{lll}
\toprule
\text{representative} & \Stab_W(\mathbb{T}_1^\varpi) & W\cdot (m_+,m_-) \\
\midrule
$[0]$ & W(E_7) & \{ (7,0) \} \\
{ [ \varpi_6 ] } & W(E_6)\rtimes \mathbb{Z}/2\mathbb{Z} & \{ (6,1),(2,5) \} \\
{ [ \varpi_5 ] }& W(D_6A_1) & \{ (5,2),(3,4),(1,6) \} \\
{ [\varpi_7 ] } & W(A_7) \rtimes \mathbb{Z}/2\mathbb{Z} & \{ (4,3),(0,7) \} \\
\bottomrule
\end{array}
\end{displaymath}
\caption{Connected components for $W\backslash \mathbb{T}_1(\mathbb{R})$. The first column lists the representatives in $W\backslash \left( P/2P \right)$. } 
\label{orbitcor}
\end{table}

To determine which picture from the tables in Section \ref{tables} for $u=1$ belongs to which of the components from Table \ref{orbitcor} we determine the adjacency relations between the five components corresponding to pointed $M$-quartics in $(\mathcal{Q}_{1}^\circ)^\mathbb{R}$. Two components are adjacent if their corresponding pointed quartics $(C,p)$ can be deformed into each other by moving through the stata $(\mathcal{Q}_{1}^\text{bit})^\mathbb{R}$ or $(\mathcal{Q}_{1}^\text{flex})^\mathbb{R}$ of codimension one. The effect of these two deformations is shown in Figure \ref{deformations}.

\begin{figure}[H]
\centering
\subfigure[Deforming through a flex.]{
\begin{tikzpicture}[scale=0.7]

\foreach \t in {0,...,6}
{
\draw[fill=black,xshift=5cm] ({(-1.2+0.1*\t)^2},{(-1.2+0.1*\t)^3}) circle (2pt);
\draw[fill=black] ({1.1+0.1*\t},{-sqrt((1.1+0.1*\t)^3-(1.1+0.1*\t)^2)}) circle (2pt);
\draw[fill=black,xshift=10cm] ({(-1.1-0.065*\t)^2-1},{(-1.1-0.065*\t)^3-(-1.1-0.065*\t)}) circle (2pt);
}

\draw[scale=1,domain=1:2,smooth,variable=\t, thick]
plot ({\t},{sqrt((\t)^3-(\t)^2)});
\draw[scale=1,domain=1:2,smooth,variable=\t , thick]
plot ({\t},{-sqrt((\t)^3-(\t)^2)});
\draw[fill=black] (0,0) circle (1pt);
\draw[scale=1,domain=-1.2:1.2,smooth,variable=\t, thick,xshift=5cm]
plot ({(\t)^2},{(\t)^3});
\draw[scale=1,domain=-1.5:1.5,smooth,variable=\t, thick,xshift=10cm]
plot ({(\t)^2-1},{(\t)^3-(\t)});

\end{tikzpicture}
}

\subfigure[Deforming through a bitangent.]{
\begin{tikzpicture}[scale=0.7]

\foreach \t in {1,...,6}
{
\draw[fill=black] ({(-1.1-0.065*\t)^2-1},{(-1.1-0.065*\t)^3-(-1.1-0.065*\t)}) circle (2pt);
\draw[fill=black,xshift=5cm] ({(-1.1-0.065*\t)^2-1},{(-1.1-0.065*\t)^3-(-1.1-0.065*\t)}) circle (2pt);
\draw[fill=black,xshift=10cm] ({(-1.1-0.065*\t)^2-1},{(-1.1-0.065*\t)^3-(-1.1-0.065*\t)}) circle (2pt);
}

\draw[fill=black] ({(-1.1)^2-1},{(-1.1)^3-(-1.1)}) circle (2pt);
\draw[fill=black,xshift=5cm] ({(-1)^2-1},{(-1)^3-(-1)}) circle (2pt);
\draw[fill=black,xshift=10cm] ({(-0.9)^2-1},{(-0.9)^3-(-0.9)}) circle (2pt);

\draw[scale=1,domain=-1.5:1.5,smooth,variable=\t, thick]
plot ({(\t)^2-1},{(\t)^3-(\t)});
\draw[scale=1,domain=-1.5:1.5,smooth,variable=\t, thick,xshift=5cm]
plot ({(\t)^2-1},{(\t)^3-(\t)});\draw[scale=1,domain=-1.5:1.5,smooth,variable=\t, thick,xshift=10cm]
plot ({(\t)^2-1},{(\t)^3-(\t)});

\end{tikzpicture}
}
\caption{The two ways of passing through a codimension one stratum in the moduli space $(\mathcal{Q}_1^\circ)^\mathbb{R}$.}
\label{deformations}

\end{figure}
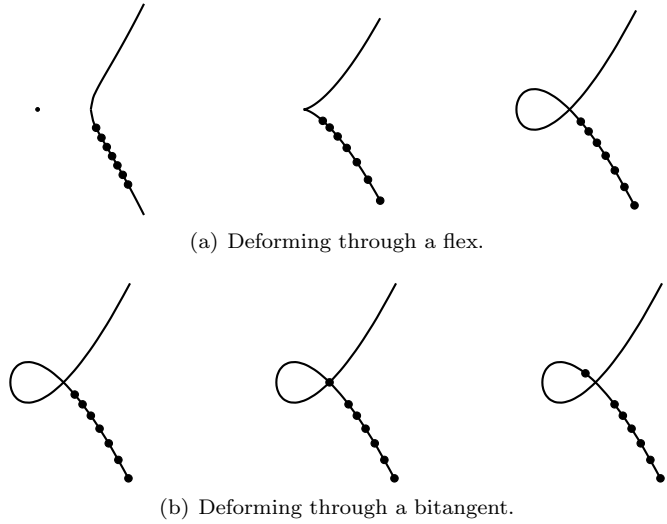

\begin{prop}
The adjacency graph for the five components of real pointed $M$-curves in $(\mathcal{Q}_{1}^\circ)^\mathbb{R}$ is given by:
\begin{center}
\begin{tikzpicture}
    \node (1) at ( 0,0) {$(7)$};
    \node (2) at ( 2,0) {$(7,0)$};    
    \node (3) at ( 4,0) {$(6,1)$};
    \node (4) at ( 6,0) {$(5,2)$};
    \node (5) at ( 8,0) {$(4,3)$};
    
    \draw [-] (1) -- (2) -- (3) -- (4) -- (5);       
\end{tikzpicture}
\end{center}
where we label components of $W\backslash \mathbb{T}_1^\circ$ by a representative for the corresponding orbit $W\cdot(m_+,m_-)$ and the component $W\backslash \mathbb{T}_{-1}^\circ$ by $(7)$.
\end{prop}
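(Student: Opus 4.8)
The plan is to work entirely inside the ``seven points on a real plane nodal cubic'' picture supplied by Theorem~\ref{bigrealthm} and the construction in its proof, and to translate each potential adjacency into an elementary wall-crossing of the pair $(C,p)$. Two components are adjacent precisely when representatives can be joined by a path meeting $\mathcal{Q}_{1,\mathbb{R}}^\text{bit}$ or $\mathcal{Q}_{1,\mathbb{R}}^\text{flex}$ transversally in one point and hitting nothing else of positive codimension; since $\mathcal{Q}_{1,\mathbb{R}}^\text{hflex}$ has codimension $2$ it is irrelevant, and crossing either wall keeps $C$ smooth, hence an $M$-curve. So I only need to see what the two walls do to the seven marked points $t_1,\dots,t_7\in Y^{\mathrm{ns}}$ and to the induced real form on $Y$. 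I will use throughout that $Y$ has type $I_1$ away from the walls (Table~\ref{Pezzostrata}), that $Y^{\mathrm{ns}}(\mathbb{R})\cong\mathbb{R}^\ast$ in the four split cases and $\cong S^1$ in the compact one, and that the four split components are indexed by the $W$-orbits of $(m_+,m_-)$ recorded in Table~\ref{orbitcor}.

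First I would settle the split part. Crossing $\mathcal{Q}_{1,\mathbb{R}}^\text{bit}$ forces $Y$ to acquire type $I_2$, i.e.\ the node of the plane cubic $Y_B$ runs through one of the blown-up points; in the coordinate $t$ this is a single $t_i$ passing through the node of $Y^{\mathrm{ns}}(\mathbb{R})\cong\mathbb{R}^\ast$ (Figure~\ref{deformations}(b)), which flips exactly one sign, so $(m_+,m_-)\mapsto(m_+\mp1,m_-\pm1)$. Hence the split-to-split adjacencies are exactly the edges of the ``sign-flip'' path on the eight values $(7,0),(6,1),\dots,(0,7)$, read in the quotient by the orbit partition $\{(7,0)\}$, $\{(6,1),(2,5)\}$, $\{(5,2),(3,4),(1,6)\}$, $\{(4,3),(0,7)\}$. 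A short check (only neighbouring values can lie in distinct orbits, and the central flip $(4,3)\leftrightarrow(3,4)$ stays inside one orbit) shows this collapses to the path $(7,0)-(6,1)-(5,2)-(4,3)$, with no chords and no loops.

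Then I would attach the compact vertex $(7)$, the unique component of $\Stab_W(\mathbb{T}_{-1})\backslash\mathbb{T}_{-1}^\circ(\mathbb{R})$, where $Y^{\mathrm{ns}}(\mathbb{R})\cong S^1$. Crossing $\mathcal{Q}_{1,\mathbb{R}}^\text{flex}$ makes $Y$ cuspidal (type $II$), so $Y_B$ passes through a cuspidal cubic, whose real smooth locus is the additive group $\mathbb{R}$; the two re-smoothings of a cuspidal cubic give $Y^{\mathrm{ns}}(\mathbb{R})\cong\mathbb{R}^\ast$ on one side and $\cong S^1$ on the other, so a flex-crossing joins $(7)$ to a split component. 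On re-smoothing toward $\mathbb{R}^\ast$ the seven points, which sit in the identity component $\mathbb{R}$ of the cuspidal group, flow into the identity component $\mathbb{R}_{>0}$ of $\mathbb{R}^\ast$, so $(m_+,m_-)=(7,0)$; this is precisely the deformation drawn in Figure~\ref{deformations}(a), whose seven marked points land on the positive real locus. Finally $(7)$ meets no bitangent wall: for the $S^1$-type the node of $Y_B$ is the isolated real point of the acnodal cubic, which is not a limit of points of $Y^{\mathrm{ns}}(\mathbb{R})\cong S^1$, so no blown-up point can collide with it inside this stratum; hence $(7)$ has degree one. Combining the three steps yields the path $(7)-(7,0)-(6,1)-(5,2)-(4,3)$. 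I expect the last step --- pinning down \emph{which} split component the flex wall meets, and excluding any further neighbour of $(7)$ --- to be the main obstacle; the cleanest route is the explicit coordinate computation underlying Figure~\ref{deformations}(a) together with the acnode remark, and it is consistent with the fact that $(7)$ and the $(7,0)$-component are the only ones carrying the full Weyl-group symmetry ($C_W(-1)=W$, and $\Stab_W$ of the $[0]$-piece equals $W(E_7)$ by Table~\ref{orbitcor}), the cuspidal stratum being connected by an analogue of Theorem~\ref{bigrealthm}.
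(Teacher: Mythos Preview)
Your proof is correct and follows the same approach as the paper: bitangent crossings flip one sign of $(m_+,m_-)$ (collapsing under the orbit partition to the split path), while a flex crossing switches $Y^{\mathrm{ns}}(\mathbb{R})$ between $S^1$ and $\mathbb{R}^\ast$ with the seven points landing in the identity component, giving the single edge $(7)-(7,0)$. Your version supplies considerably more justification than the paper's three-sentence argument---notably the acnode observation excluding any bitangent wall from the $(7)$ component and the explicit check that the sign-flip graph has no chords---but the strategy is identical.
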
 

\begin{proof}
A curve in the component coresponding to $W\backslash \mathbb{T}_{1}^\circ$ can only be deformed to one in the component of $W\backslash \mathbb{T}_{-1}^\circ$ by deforming through a flex point. This is a transition from $(7)$ to $(7,0)$. By repeatedly deforming through a bitangent one moves through the components 
\[(7,0)\leftrightarrow(6,1)\leftrightarrow(5,2)\leftrightarrow(4,3).\] This proves the proposition and shows that the pictures corresponding to the components are indeed the ones shown in the table for $u=1$.  
\end{proof} 

For $\chi \in \mathbb{T}_u(\mathbb{R})$ with $u$ of type $A_1^i$ with $i=1,2,3$ we can do a similar analysis. In this case the construction associates to $\chi$: $7-2i$ real points and $i$ pairs of complex conjugate points for a suitable representative $u$ (not involving the reflection $s_7$). For example the involution $u=s_6s_4$ of type $A_1^2$ acts as
\[ s_6s_4 \cdot (P_1,P_2,P_3,P_4,P_5,P_6,P_7) = (P_1,P_2,P_3,P_5,P_4,P_7,P_6) \]
on the $P_i$ so that $\chi \in \mathbb{T}_{s_6s_4}$ produces seven points in $Z^{\ns}(\mathbb{C})\cong \mathbb{C}^\ast$ with $P_1,P_2,P_3$ real points and $(P_4,P_5)$ and $(P_6,P_7)$ complex conjugate pairs. The centralizer $C_W(u)$ is more complicated in this case. It acts on the points by permutations preserving the real points and conjugate pairs and triangular Cremona transformations centered in a triples of real points or a real point and a pair of conjugate points. The orbits are calculated in Table \ref{otherorbit} and confirm the numbers we computed earlier in Table \ref{tablecom}. 
\begin{table}[h]
\centering
\begin{displaymath}
\begin{array}{lll}
\toprule
u & \text{representative} & C_W(u)\cdot (m_+,m_-) \\
\midrule
A_1 & [0] &\{ (5,0) \} \\
& [\varpi_4] & \{ (4,1),(2,3),(0,5) \} \\
& [\varpi_3] & \{ (3,2),(1,4) \} \\
A_1^2 & [0] & \{ (3,0) \} \\
 & [\varpi_4] & \{ (2,1),(0,3) \} \\
 & [\varpi_3] & \{ (1,2) \} \\
A_1^3 & [0] & \{ (1,0) \} \\
 & [\varpi_6] & \{(0,1)\} \\
\bottomrule
\end{array}
\end{displaymath}
\caption{Connected components of $C_W(u)\backslash \mathbb{T}_u(\mathbb{R})$ for $u$ of type $A_1^i$. The second column shows the representatives for $C_W(u)\backslash \left( P_{1,u}/2P_{1,u} \right)$ from Table \ref{tablecom}.}
\label{otherorbit}
\end{table}

For $\chi \in \mathbb{T}_u$ with $u$ of type $D_4$ or $A_3^\prime$ the situation is different. In this case $u$ acts as a nonstandard Cremona transformation on the points. In fact it acts as a de Jonqui\'eres involution of order three centered in $5$ of the points (for the definition we refer to \cite{Russo}). The curve $C(\mathbb{R})$ consists of two nested ovals and only the outer oval can contain an inflection point, otherwise we would again get a contradiction with Bezout's theorem. This implies that the component with $p$ on the outer oval is the unit component of $\mathbb{T}_u(\mathbb{R})$ for $u$ of type $D_4$ and $A_3^\prime$.  

\newpage

\section{Tables}\label{tables}

\begin{table}[H]
\centering
\begin{tabular}{ccc}
\toprule
\multicolumn{2}{c}{$1$} & $E_7$ \\
\midrule
\subfigure[$0$]{\includegraphics[trim = 250 0 250 0,clip,width=3.3cm]{Q4MUL0}} & 
\subfigure[$\varpi_6$]{\includegraphics[trim= 250 0 250 0,clip,width=3.3cm]{Q4MUL6}} & 
\multirow{2}{*}{\subfigure[$0$]{\includegraphics[trim = 250 0 250 0,clip,width=3.3cm]{Q4ADD}}} \\ 
\subfigure[$\varpi_5$]{\includegraphics[trim = 250 0 250 0,clip,width=3.3cm]{Q4MUL5}} & 
\subfigure[$\varpi_7$]{\includegraphics[trim = 250 0 250 0,clip,width=3.3cm]{Q4MUL7}} & \\

\toprule
\multicolumn{2}{c}{$A_1$} & $D_6$ \\
\midrule
\subfigure[$0$]{\includegraphics[trim = 250 0 250 0,clip,width=3.3cm]{Q3MUL0}} & 
\subfigure[$\varpi_4$]{\includegraphics[trim= 250 0 250 0,clip,width=3.3cm]{Q3MUL6}} & 
\multirow{2}{*}{ \subfigure[$0$]{\includegraphics[trim = 250 0 250 0,clip,width=3.3cm]{Q3ADD}}} \\ 
\multicolumn{2}{c}{\subfigure[$\varpi_3$]{\includegraphics[trim = 250 0 250 0,clip,width=3.3cm]{Q3MUL5}}} & \\ 
\bottomrule
\end{tabular}
\end{table}

\begin{table}[H]
\centering
\begin{tabular}{ccc}
\toprule
\multicolumn{2}{c}{$A_1^2$} & $D_4A_1$ \\
\midrule
\subfigure[$0$]{\includegraphics[trim = 250 0 250 0,clip,width=3.3cm]{Q2MUL0}} & 
\subfigure[$\varpi_4$]{\includegraphics[trim = 250 0 250 0,clip,width=3.3cm]{Q2MUL6}} & 
\multirow{2}{*}{\subfigure[$0$]{\includegraphics[trim = 250 0 250 0,clip,width=3.3cm]{Q2ADD}}} \\
\multicolumn{2}{c}{\subfigure[$\varpi_3$]{\includegraphics[trim = 250 0 250 0,clip,width=3.3cm]{Q2MUL5}} } & \\
\toprule
\multicolumn{2}{c}{$A_1^3$} & $A_1^4$ \\
\midrule
\subfigure[$0$]{\includegraphics[trim = 250 0 250 0,clip,width=3.3cm]{Q1MUL0}} & 
\subfigure[$\varpi_6$]{\includegraphics[trim = 250 0 250 0,clip,width=3.3cm]{Q1MUL6}} &
\subfigure[$0$]{\includegraphics[trim = 250 0 250 0,clip,width=3.3cm]{Q1ADD}} \\
\toprule
\multicolumn{2}{c}{$D_4$} & $A_1^{3\prime}$ \\
\midrule
\multirow{3}{*}{\subfigure[$0$]{\includegraphics[trim = 250 0 250 0,clip,width=3.3cm]{Q22MUL0}}}& 
\multirow{3}{*}{\subfigure[$\varpi_6$]{\includegraphics[trim = 250 0 250 0,clip,width=3.3cm]{Q22MUL6}}} &
\subfigure[$0$]{\includegraphics[trim = 250 0 250 0,clip,width=3.3cm]{Q22ADD}} \\
& & \subfigure[$\varpi_1$]{\includegraphics[trim = 250 0 250 0,clip,width=3.3cm]{Q22ADD2}} \\
\bottomrule
\end{tabular}
\end{table}

\bibliographystyle{plain}
\bibliography{master}

\begin{thebibliography}{10}

\bibitem{Bourbaki}
N.~Bourbaki.
\newblock {\em Lie Groups and Lie Algebras: Chapter 4-6}.
\newblock Springer, 2008.

\bibitem{Carter}
R.W. Carter.
\newblock Conjugacy classes in the weyl group.
\newblock {\em Comp. Math.}, 25(1):1--59, 1972.

\bibitem{Casselman}
B.~Casselman.
\newblock {\em Computations in real tori}, volume 472 of {\em Contempary
  Mathematics}.
\newblock American Mathematical Society, 2007.

\bibitem{Enriques}
A.~Degtyarev, I.~Itenberg, and V.~Kharlamov.
\newblock {\em Real {E}nriques Surfaces}, volume 1746 of {\em Lecture Notes in
  Mathematics}.
\newblock Springer-Verlag, 2000.

\bibitem{DegtyarevKharlamov}
A.~Degtyarev and V.~Kharlamov.
\newblock Topological properties of real algebraic varieties: Rohklin's way.
\newblock {\em Russ. Math. Surveys}, 55(4):735--814, 2000.

\bibitem{DolOrt}
I.~Dolgachev and D.~Ortland.
\newblock {\em Point sets in projective spaces and theta functions}, volume 165
  of {\em Ast\'erisque}.
\newblock Soci\'et\'e Math\'ematique de France, 1988.

\bibitem{Felder}
G.~Felder and A.P. Veselov.
\newblock {C}oxeter group actions on the complement of hyperplanes and special
  involutions.
\newblock {\em Journal of the European Math. Soc.}, 7:101--116, 2005.

\bibitem{Howlett}
R.B. Howlett.
\newblock Normalizers of parabolic subgroups of reflection groups.
\newblock {\em J. London Math. Soc.}, 21(2):62--80, 1980.

\bibitem{Klein}
F.~Klein.
\newblock \"{U}ber den {V}erlauf der {A}belschen {I}ntegrale bei den {K}urven
  vierten {G}rades.
\newblock {\em Math. Annalen}, 10:365--397, 1876.

\bibitem{Kollar}
J.~Koll\'ar.
\newblock Real algebraic surfaces.
\newblock {\em preprint}, 1997.

\bibitem{Looijenga}
E.~Looijenga.
\newblock The discriminant of a real simple singularity.
\newblock {\em Compositio Mathematica}, 37(1):51--62, 1978.

\bibitem{Looijenga3}
E.~Looijenga.
\newblock {\em Cohomology of $\mathcal{M}_3$ and $\mathcal{M}_3^1$}, volume 150
  of {\em Contemp. Math.}
\newblock American Mathematical Society, 1993.

\bibitem{Looijenga2}
E.~Looijenga.
\newblock {A}rtin groups and the fundamental groups of some moduli spaces.
\newblock {\em Journal of Topology}, 1:187--216, 2008.

\bibitem{Lorenz}
M.~Lorenz.
\newblock {\em Multiplicative Invariant Theory}.
\newblock Springer, 2005.

\bibitem{Manin}
Yu.~I. Manin.
\newblock {\em Cubic Forms: Algebra, Geometry, Arithmetic}.
\newblock Elsevier, 1986.

\bibitem{Richardson}
R.W. Richardson.
\newblock Conjugacy classes of involutions in {C}oxeter groups.
\newblock {\em Bull. Austral. Math. Soc.}, 26:1--15, 1982.

\bibitem{Russo}
F.~Russo.
\newblock {\em The antibirational involutions of the plane and the
  classification of real del {P}ezzo surfaces}.
\newblock Proceedings Conference in Memory of Paolo Francia. De Gruyter, 2002.

\bibitem{Springer}
T.A. Springer.
\newblock Some remarks on involutions in {C}oxeter groups.
\newblock {\em Comm. Algebra}, 10:631--636, 1987.

\bibitem{Wall}
C.T.C. Wall.
\newblock Real forms of smooth del {P}ezzo surfaces.
\newblock {\em J. Reine Angew. Math.}, 47:47--66, 1987.

\bibitem{Zeuthen}
H.G. Zeuthen.
\newblock Sur les diff\'erentes formes des courbes planes du quatrime ordre.
\newblock {\em Math. Ann.}, 7:410--432, 1874.

\bibitem{Zvonilov}
V.I. Zvonilov.
\newblock Isotopies of real trigonal curves on {H}irzebruch surfaces.
\newblock {\em Journal of Mathematical Sciences}, 113(6):804--809, 2003.

\end{thebibliography}

\end{document}